\documentclass[11pt]{article}

\textheight=8.2in \topmargin=0in \textwidth=6.2in
\oddsidemargin=0.28in

\usepackage{latexsym}
\usepackage{amssymb}
\usepackage{graphicx}

\newtheorem{theorem}{\bf Theorem}[section]
\newtheorem{definition}{\bf Definition}[section]
\newtheorem{lemma}{\bf Lemma}[section]

\newtheorem{corollary}{\bf Corollary}[section]
\def\udots{\mathinner{\mkern1mu\raise-1pt\vbox{\kern7pt\hbox{.}}\mkern2mu
    \raise2pt\hbox{.}\mkern2mu\raise5pt\hbox{.}\mkern1mu}}


\begin{document}
\title{{\bf Mixed Boundary Value  Problems of  Semilinear Elliptic PDEs  and BSDEs with Singular Coefficients } }
\author{{Xue Yang  \qquad Tusheng Zhang\thanks{Corresponding author (E-mail: tusheng.zhang@manchester.ac.uk).}} \\
\\
 { School of Mathematics,\quad University of Manchester,} \\
 { Manchester,\quad UK, \quad M13 9PL}
}

\date{}
\maketitle
\begin{abstract}
In this paper, we prove that there exists a unique weak solution to the mixed boundary value problem for a general class of semilinear second order elliptic partial differential equations with singular coefficients. Our approach is probabilistic. The theory of Dirichlet forms and backward stochastic differential equations with singular coefficients and infinite horizon plays a crucial role.
\end{abstract}

{\bf Keywords:} {Dirichlet forms; Quadratic forms; Fukushima's decomposition; Mixed boundary value problem; Backward stochastic differential equations; Reflecting diffusion processes. }\\

\newpage

\section{Introduction}
\setcounter{equation}{0}
In this paper, our aim is to use probabilistic methods to solve the mixed boundary value problem for semilinear second order elliptic partial differential equations (called PDEs for short) of the following form:
\begin{eqnarray}
\label{eq}
\left\{\begin{array}{ll}
{L}u(x)=-F(x,u(x),\nabla u(x)),&\textrm{on $D$}\\
\frac{1}{2}\frac{\partial u}{\partial \gamma}(x)-\widehat{B}\cdot{ n}(x)u(x)=\Phi(x) &\textrm{on $\partial D$ }
\end{array}\right.
\end{eqnarray}
The elliptic operator $L$ is given by :
\begin{eqnarray}
\label{intro.operator L}
L&=&\frac{1}{2}\nabla\cdot(A\nabla)+B\cdot\nabla-\nabla\cdot(\hat{B}\cdot)+Q\\\nonumber
&=&\frac{1}{2}\sum_{i,j=1}^d \frac{\partial}{\partial x_i}\left(a_{ij}(x)\frac{\partial}{\partial x_j}\right)+\sum_{i=1}^d  B_i(x)\frac{\partial}{\partial x_i}-div(\hat{B}\cdot)+Q(x)
\end{eqnarray}
on a d-dimensional smooth bounded Euclidean domain $D$.\\
$A(x)=(a_{ij})_{1\leq i, j \leq d}$: $R^{d}\rightarrow \ R^{d}\otimes R^{d}$ is a smooth, symmetric
matrix-valued function which is uniformly elliptic. That is, there is a constant $\lambda >1$ such that
\begin{eqnarray}
\label{elliptic constant}
\frac{1}{\lambda}I_{d\times d}\leq A(\cdot)\leq \lambda I_{d\times d}.
\end{eqnarray}
Here $B=(B_{1},...,B_{d})$ and $\hat{B}=(\hat{B}_{1},...,\hat{B}_{d})$ : $R^{d}\rightarrow \ R^{d}$ are Borel measurable
functions, which could be singular, and $Q$ is a real-valued Borel measurable function defined on $R^{d}$ such that, for some  $p>\frac{d}{2}$,
$$
I_{D}(|B|^{2}+|\hat{B}|^{2}+|Q|)\in L^{p}(D).
$$
$L$ is rigorously determined by the following quadratic form:
\begin{eqnarray}
\mathcal{Q}(u,v):=(-Lu,v)_{L^{2}(D)}&=&\frac{1}{2}\sum_{i,j}\int_{D}a_{ij}(x)\frac{\partial u}{\partial x_{i}}\frac{\partial v}{\partial x_{j}}dx-\sum_{i}\int_{D}B_{i}(x)\frac{\partial u}{\partial x_{i}}v(x)dx\nonumber\\
&-&\sum_{i}\int_{D}\hat{B}_{i}(x)\frac{\partial v}{\partial x_{i}}u(x)dx-\int_{D}Q(x)u(x)v(x)dx.\nonumber
\end{eqnarray}
Details about the operator $L$ can be found in ${\cite{GT}}$, ${\cite{MR}}$ and $\cite{T}$.\\
The function $F(\cdot,\cdot,\cdot)$ in $(\ref{eq})$ is a nonlinear function defined on $R^{d}\times R\times R^{d}$ and $\Phi(x)$ is a bounded measurable function defined on the boundary $\partial D$ and $\gamma=An$, where
$n$ denotes the inward normal vector field defined on the boundary $\partial D$.
\vskip 0.3cm
To solve the problem (\ref{eq}), it turns out that we need to establish the existence and uniqueness of solutions of backward stochastic differential equations (BSDEs) with singular coefficients and infinite horizon, which is of independent interest.
\vskip 0.3cm
Probabilistic approaches to boundary value problem of second order differential operators have been adopted by many authors and the earliest work went back as early as 1944 in $\cite{K}$. There has been a lot of study on the Dirichlet boundary problem (see \cite{BH}, $\cite{Gernard}$, $\cite{CZ}$,\cite{DP}, $\cite{PEI}$ and $\cite{Z}$). However, there are not many articles on the probabilistic approaches to the Neumann boundary problem.
\vskip 0.3cm
When $A=I$, $B=0$ and $\hat{B}=0$, the following Neumann boundary problem
\begin{eqnarray}
\left\{\begin{array}{ll}
\frac{1}{2}\triangle u(x)+qu(x)=0,&\textrm{on $D$}\\
\frac{1}{2}\frac{\partial u}{\partial n}(x)=\phi(x) &\textrm{on $\partial D$ }
\end{array}\right.\nonumber
\end{eqnarray}
was solved in $\cite{BH}$ and $\cite{PEI}$, which  also gives the solution the following representation:
\begin{eqnarray}
u(x)=E_{x}[\int_{0}^{\infty}e^{\int_{0}^{t}q(B_{u})du}\phi(B_{t})dL^{0}_{t}],\nonumber
\end{eqnarray}
where $(B_{t})_{t>0}$ is the reflecting Brownian motion on the domain $D$ associated with the infinitesimal generator
\begin{eqnarray}
G=\frac{1}{2}\triangle ,\nonumber
\end{eqnarray}
and $L^{0}_{t}$, $t>0$ is the boundary local time satisfying $L^{0}_{t}=\int_{0}^{t}I_{\partial D}(B_{s})dL^{0}_{s}$.
\vskip 0.3cm
But when $\hat{B}\neq 0$, the term $\nabla\cdot(\hat{B}\cdot)$ is just a formal way of writing  because the divergence does not exist as $\hat{B}$ is only a
measurable vector field. It should be interpreted in the distributional sense. For this reason, the term $\nabla\cdot(\hat{B}\cdot)$ can not be handled by Girsanov transform or Feyman-Kac transform.
\vskip 0.3cm
The study of the boundary value problems for the general operator $L$ in the PDE literature (see e.g. \cite{GT}, \cite{T}) was always carried out under the extra condition:
$$-div(\hat{B})+Q(x)\leq 0$$
in the sense of distribution in order to use the maximum principle.
\vskip 0.3cm
When $F=0$, i.e. the linear case, problem $(\ref{eq})$ was studied in $\cite{CZ2}$( see also $\cite{CZ}$ for the Dirichlet boundary problem). The term $\nabla\cdot(\hat{B}\cdot)$ is tackled using the time-reversal of Girsanov transform of the symmetric reflecting diffusion $(\Omega, P_{x}^{0}, X^{0}_{t},t>0)$ associated with the operator
$$
L_{0}=\frac{1}{2}\nabla \cdot (A\nabla).$$
The semigroup $S_{t}$ associated with the operator $L$ has the following representation (see \cite{CFKZ}):
\begin{eqnarray}
S_{t}f(x)=&&E^{0}_{x}[f(X^{0}_{t})\exp(\int_{0}^{t}(A^{-1}B)^{*}(X^{0}_{s})dM^{0}_{s}+(\int_{0}^{t}(A^{-1}\hat {B})^{*}(X^{0}_{s})dM^{0}_{s})\circ \gamma^{0}_{t}\nonumber\\
&&{}-\frac{1}{2}\int_{0}^{t}(B-\hat{B})A^{-1}(B-\hat{B})^{*}(X^{0}_{s})ds                                                                                                                                                                                                                                                                                                                                                                                                                                                                                                                                                                                                                                                                                                                                                                                                                                                                                                                                                                                                                                                                                                                                                                                                                                                                                                                                                                                                                                                                                                                                                                                                               + \int_{0}^{t}Q(X^{0}_{s})ds)],\nonumber
\end{eqnarray}
where $M^{0}$ is the martingale part of the diffusion $X^{0}$ and $\gamma^{0}_{t}$ is the reverse operator.
\vskip 0.3cm
The main purpose of this paper is to study the nonlinear equation $(\ref{eq})$(i.e. $F\neq 0$), which can not be handled by the methods used for the linear case.  Our approach is first to  solve a  backward stochastic differential equation (BSDE) with singular coefficients and infinite horizon to produce a candidate for the solution of the boundary value problem and then to show that the candidate is indeed a solution. The results we obtained for BSDEs with infinite horizon are  of independent interest.
\vskip 0.3cm
We would like to mention that the first results on BSDEs and  probabilistic interpretation
 of solutions of semilinear parabolic PDEs  via BSDEs were obtained by Peng and pardoux  in $\cite{Peng}$, \cite{PP1} and $\cite{PP}$. There the operator $L$ is smooth and the solution is a viscosity solution. We stress that the solutions we considered for PDEs in this paper are Soblev (also called weak) solutions, not viscosity solutions.

 \vskip 0.3cm
In $\cite{Z}$, the corresponding  Dirichlet problem for the semilinear elliptic PDEs:
\begin{eqnarray}
\label{intro.Dirichlet problem}
\left\{\begin{array}{ll}
{L}u(x)=-F(x,u(x),\nabla u(x)),&\textrm{on $D$}\\
u(x)=\Phi(x) &\textrm{on $\partial D$ }
\end{array}\right.
\end{eqnarray}
was solved.
The strategy in \cite{CZ}, $\cite{Z}$ is to transform the general operator $L$ by a kind of h-transform to  an operator of the form: $L_{2}=\frac{1}{2}\nabla(A\nabla)+b\cdot\nabla+q$ which does not have the "bad" term  such as  $\nabla(\hat{B}\cdot)$. This idea  is  used in current paper too.
\vskip 0.3cm
The BSDEs we studied are inspired by the ones in  $\cite{H}$ where the author gave  a probabilistic interpretation of the solution to the following Neumann problem:
\begin{eqnarray}
\left\{\begin{array}{ll}
(\frac{1}{2}\triangle-\nu)u(x)=0,&\textrm{on $D$}\\
\frac{\partial u}{\partial n}=\phi,&\textrm{on $\partial D$ }
\end{array}\right.\nonumber
\end{eqnarray}
\vskip 0.3cm
The content of the  paper as follows. In Section 2, we study the following  BSDEs with infinite horizon:
\begin{eqnarray}
\label{intro.BSDE}
&&dY(t)=-F(X(t),Y(t),Z(t))dt+e^{\int_{0}^{t}{q}(X(u))dt}\Phi(X(s))dL_{t}+\langle Z(t),dM(t)\rangle,\nonumber\\
&&\lim_{t\rightarrow \infty}e^{\int_{0}^{t}d(X(u))du}Y_{t}=0 \quad in \quad L^{2}(\Omega),
\end{eqnarray}
  where $(X(t))_{t>0}$ is the reflecting diffusion associated with an infinitesimal generator of the form: $\mathcal{A}=\frac{1}{2}\nabla(A\nabla)+b\cdot\nabla$,  $M(t)$ is the martingale part of $X(t)$, $L_t$ is the boundary local time of $X$  and $d(\cdot)$ is an appropriate measurable function.
   The existence and uniqueness of an $L^{2}$-solution  $(Y,Z)$ is obtained. \\
In Section 3, we solve the linear PDEs of the form:
\begin{eqnarray}
\label{intro.linear}
\left\{\begin{array}{ll}
\frac{1}{2}\nabla(A\nabla u)(x)+b\cdot\nabla u(x)+qu(x)=F(x),&\textrm{on $D$}\\
\frac{1}{2}\frac{\partial u}{\partial{\gamma}}(x)=\phi(x) &\textrm{on $\partial D$ }.
\end{array}\right.
\end{eqnarray}
under the condition:
$$E_{x_0}[\int_0^{\infty}e^{\int_0^tq(X(u))du}dL_t]<\infty$$
for some $x_0\in \bar{D}$.
Useful estimates for local time and Girsanov density are  proved which will also  be  used in  subsequent sections.\\
In Section 4, we obtain the solution of the semilinear PDE:
\begin{eqnarray}
\label{intro.semilinear}
\left\{\begin{array}{ll}
\frac{1}{2}\nabla(A\nabla u)(x)+b\cdot\nabla u(x)+qu(x)=G(x,u(x),\nabla u(x)),&\textrm{on $D$}\\
\frac{1}{2}\frac{\partial u}{\partial{\gamma}}(x)=\phi(x) &\textrm{on $\partial D$ }.
\end{array}\right.
\end{eqnarray}
 To this end, we first use the solution $(Y_{x}(t),Z_{x}(t))$ of the BSDE $({\ref{intro.BSDE}})$ to produce a candidate $u_{0}(x)=E_{x}[Y_{x}(0)]$ and then find a solution $u$ of  an equation like (\ref{intro.linear}) with a given $F(x):=G(x,u_{0}(x),v_{0}(x))$. Finally we identify $u$ with $u_0$.
In Section 5, we consider the general problem:
\begin{eqnarray}
\label{intro.final equation}
\left\{\begin{array}{ll}
{L}u(x)=-F(x,u(x)),&\textrm{on $D$}\\
\frac{1}{2}\frac{\partial u}{\partial \gamma}(x)-\widehat{B}\cdot{ n}(x)u(x)=\Phi(x) &\textrm{on $\partial D$ }
\end{array}\right..
\end{eqnarray}
 We apply the transformation introduced in $\cite{CZ}$ to transform the problem $(\ref{intro.final equation})$ to a  problem like $(\ref{intro.semilinear})$.   An inverse transformation will yield the solution of the  problem $(\ref{intro.final equation})$ under the condition that the $L^p$ norm of $\hat{B}$ is sufficiently small.\\
To remove some of the restrictions imposed on $\hat{B}$ in Section 5,  in Section 6, we  study the  $L^{1}$-solutions of the BSDEs $(\ref{intro.BSDE})$ under
appropriate conditions. Our approach is inspired  by the one in \cite{BDHPS}. The study of $L^2$-solutions and $L^1$-solutions of the BSDEs (\ref{intro.BSDE}) are carried out in Section 2 and Section 6 separately because the methods used for these two cases are quite different.

\section{BSDEs with Singular Coefficients and Infinity Horizon}
\setcounter{equation}{0}
Consider the operator
\begin{eqnarray}
L_{1}=\frac{1}{2}\sum_{i,j=1}^d \frac{\partial}{\partial x_i}\left(a_{ij}(x)\frac{\partial}{\partial x_j}\right)+\sum_{i=1}^d  b_i(x)\frac{\partial}{\partial x_i}\nonumber
\end{eqnarray}
on the domian D equipped with the Neumann boundary condition:
\begin{eqnarray}
\frac{\partial}{\partial \gamma}:=\langle An,\nabla\cdot\rangle=0,\quad on \quad \partial D.\nonumber
\end{eqnarray}
By $\cite{LS}$, there exists a unique reflecting diffusion process denoted by $(\Omega, \mathcal{F}_{t}, X_{x}(t), P_{x},\theta_{t}, x\in D)$ associated with the generator $L_{1}$.\\
Here $\theta:\Omega\rightarrow \Omega$ is the shift operator defined as follows:
$$
X_{x}(s)(\theta_{t}\cdot)=X_{x}(t+s),\quad s,t\geq 0.
$$
Let $E_{x}$ denote the expectation under the measure $P_{x}$.\\
Set $\tilde{b}=\{\tilde{b}_{1},...,\tilde{b}_{d}\}$, where  $\tilde{b}_{i}=\frac{1}{2}\sum_{j}\frac{\partial a_{ij}}{\partial x_{j}}+b_{i}$.\\
Then the process $X_{x}(t)$ has the following decomposition:
\begin{eqnarray}
\label{deomposition of X}
X_{x}(t)=X_{x}(0)+M_{x}(t)+\int_{0}^{t}\tilde{b}(X_{x}(s))ds+\int_{0}^{t}An(X_{x}(s))dL_{s},\quad P_{x}-a.s..
\end{eqnarray}
Here $M_{x}(t)$ is a $\mathcal{F}_{t}$ square integrable continuous martingale additive functional. And $L_{t}$
is a positive increasing continuous additive functional satisfying $L_{t}=\int_{0}^{t}I_{\{X_{x}(s)\in\partial D\}}dL_{s}$.\\
We write $X_{x}(t)$ as $X(t)$ for short in the following discussion.\\\\
In this section, we will study the backward stochastic differential equations with singular coefficients and infinite horizon associated with the martingale part $M_{x}(t)$ and the local time $L_{t}$. A unique $L^{2}$ solution of such  BSDEs is obtained.\\\\
Let $g(\omega,t,y,z):\Omega\times R^{+}\times R\times R^{d}\rightarrow R$ be a progressively measurable function. Consider the following conditions:\\
\textbf{(A.1)} $(y_{1}-y_{2})(g(t,y_{1},z)-g(t,y_{2},z))\leq -a_{1}(t)|y_{1}-y_{2}|^{2}$,\\
\textbf{(A.2)} $|g(t,y,z_{1})-g(t,y,z_{2})|\leq a_{2}|z_{1}-z_{2}|$,\\
\textbf{(A.3)} $|g(t,y,z)|\leq|g(t,0,0)|+ a_{3}(t)(1+|y|)$.\\
Here $a_{1}(t)$ and $a_{3}(t)$ are two progressively measurable processes and $a_{2}$ is a constant.\\
Set $a(t)=-a_{1}(t)+\delta a^{2}_{2}$, for some constant $\delta>\frac{1}{2\lambda}$, where $\lambda $ is the constant appeared in $(\ref{elliptic constant})$.\\
\begin{lemma}
\label{lemma basic BSDE}
Assume the conditions (A.1)-(A.3) and
$$
E_{x}[\int_{0}^{\infty}e^{2\int_{0}^{t}a(u)du}|g(t,0,0)|^{2}dt]<\infty.
$$
Then there exists a unique solution $(Y_{x}(t),Z_{x}(t))$ to the following backward stochastic differential equation:
\begin{eqnarray}
\label{0 terminal value equation}
&&Y_{x}(t)=Y_{x}(T)+\int_{t}^{T}g(s,Y_{x}(s),Z_{x}(s))ds-\int_{t}^{T}<Z_{x}(s),dM_{x}(s)>, \quad t<T;\nonumber\\
&&\lim_{t\rightarrow\infty}e^{\int_{0}^{t}a(u)du}Y_{x}(t)=0, \quad in \quad L^{2}(\Omega).
\end{eqnarray}
Moreover,
\begin{eqnarray}
\label{estimates basic BSDE}
E_{x}[\sup_{t}e^{2\int_{0}^{t}a(u)du}|Y_{x}(t)|^{2}]<\infty \quad and \quad E_{x}[\int_{0}^{\infty}e^{2\int_{0}^{s}a(u)du}|Z_{x}(s)|^{2}ds]<\infty.
\end{eqnarray}
\end{lemma}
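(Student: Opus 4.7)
The overall strategy is the classical truncation-plus-limit argument for infinite-horizon BSDEs with a monotone generator, in the spirit of Pardoux, Darling--Pardoux and Briand--Hu. The key device is the random exponential weight $e^{2A(t)}$ with $A(t)=\int_0^t a(u)\,du$, whose logarithmic derivative $a(s)=-a_1(s)+\delta a_2^2$ is tuned so that, after It\^o's formula, the contributions coming from monotonicity and the Lipschitz cross term exactly cancel at the $|Y|^2$ level, while the uniform ellipticity of $A(\cdot)$ in $\langle Z,A(X_s)Z\rangle$ leaves a strictly positive coefficient on $|Z|^2$ under the assumption $\delta>\frac{1}{2\lambda}$.

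First, for each finite $T>0$ I would solve the BSDE on $[0,T]$ with the zero terminal condition $Y^T(T)=0$. Under (A.1)--(A.3) this is precisely the monotone-in-$y$, Lipschitz-in-$z$, linear-growth setting, so existence and uniqueness of $(Y^T,Z^T)$ follow from the standard Pardoux--Peng / Briand--Hu monotone BSDE theorem, after a routine truncation of $g(\cdot,0,0)$ and $a_3(\cdot)$ if they are only $L^2$ in time.

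Next, derive a priori bounds uniform in $T$ by applying It\^o's formula to $e^{2A(t)}|Y^T(t)|^2$. Fukushima's decomposition gives $d\langle M_x\rangle_s=A(X_s)\,ds$, and uniform ellipticity gives $\langle Z^T,A(X_s)Z^T\rangle\ge\frac{1}{\lambda}|Z^T|^2$. Splitting
\begin{equation*}
2Y^Tg(s,Y^T,Z^T)=2Y^T[g(s,Y^T,Z^T)-g(s,0,Z^T)]+2Y^T[g(s,0,Z^T)-g(s,0,0)]+2Y^Tg(s,0,0),
\end{equation*}
and applying (A.1), (A.2), and Young's inequality $2a_2|Y^T||Z^T|\le 2\delta a_2^2|Y^T|^2+\frac{1}{2\delta}|Z^T|^2$, the $|Y^T|^2$ terms drop out exactly because of the choice of $a(s)$. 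The remaining $|Z^T|^2$ term is dominated by the ellipticity term thanks to $\delta>\frac{1}{2\lambda}$. Taking $E_x[\,\cdot\mid\mathcal{F}_t]$ and absorbing $2Y^Tg(s,0,0)$ by one more Young inequality yields, uniformly in $T$,
\begin{equation*}
E_x\!\left[e^{2A(t)}|Y^T(t)|^2\right]+c\,E_x\!\left[\int_t^T e^{2A(s)}|Z^T(s)|^2\,ds\right]\le C\,E_x\!\left[\int_0^{\infty}e^{2A(s)}|g(s,0,0)|^2\,ds\right].
\end{equation*}
The pathwise supremum estimate in (\ref{estimates basic BSDE}) then follows from the same identity combined with the Burkholder--Davis--Gundy inequality applied to the martingale part.

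Finally, pass to the limit $T\to\infty$. Applying the same It\^o computation to the differences $(Y^{T_2}-Y^{T_1},Z^{T_2}-Z^{T_1})$ on $[0,T_1]$, with $Y^{T_2}(T_1)$ as terminal data of the shorter system, shows that $(e^{A(\cdot)}Y^T,e^{A(\cdot)}Z^T)$ is Cauchy in the relevant weighted $L^2$-spaces; denote its limit by $(Y_x,Z_x)$. This limit satisfies the BSDE on every finite sub-interval, and the a priori bound forces $E_x[e^{2A(T)}|Y_x(T)|^2]\to 0$ as $T\to\infty$, which is the infinite-horizon terminal condition. Uniqueness follows from the same It\^o identity applied to the difference of two candidate solutions: the cancellation forces both components to vanish. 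The main obstacle is the delicate balance between the random weight $e^{2A(t)}$, the possibly singular (only $L^p$-integrable) monotonicity process $a_1(t)$, and the ellipticity constant of $A$; the precise assumption $\delta>\frac{1}{2\lambda}$ is what simultaneously annihilates the $|Y|^2$ contribution from monotonicity and preserves coercivity in $Z$, and any loosening would break either the uniform a priori bound or the subsequent limit passage.
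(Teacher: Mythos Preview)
Your proposal is correct and follows essentially the same route as the paper: finite-horizon truncation with zero terminal value, the weighted It\^o identity on $e^{2A(t)}|Y|^2$, a Cauchy argument in the weighted norms, BDG for the $\sup_t$ estimate, and the same It\^o computation on the difference for uniqueness. One small point to tighten: if you use the full $\delta$ in the Young inequality $2a_2|Y||Z|\le 2\delta a_2^2|Y|^2+\tfrac{1}{2\delta}|Z|^2$, the $|Y|^2$ terms cancel \emph{exactly}, leaving no room to absorb the $\epsilon|Y|^2$ produced by the subsequent Young inequality on $2Y\,g(s,0,0)$; the paper handles this by choosing $\delta_1>\tfrac{1}{2\lambda}$ and $\delta_2>0$ with $\delta_1+\delta_2<\delta$, using $\delta_1$ for the cross term and $\delta_2$ for the forcing term.
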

\begin{proof}\\
Existence:\\
The proof of this lemma is similar to that of Theorem 3.2 in $\cite{Z}$, but the terminal conditions here are different. By Theorem 3.1 in $\cite{Z}$, the following BSDE has a unique solution $(Y^{n}_{x}(t),Z^{n}_{x}(t))$:
\begin{eqnarray}
\label{BSDE Yn}
Y^{n}_{x}(t)=\int_{t}^{n}g(s,Y^{n}_{x}(s),Z^{n}_{x}(s))ds-\int_{t}^{n}<Z^{n}_{x}(s),dM_{x}(s)>,\quad t\leq n;
\end{eqnarray}
and moreover,
\begin{eqnarray}
Y^{n}_{x}(t)=0,\quad Z^{n}_{x}(t)=0,\quad t>n.\nonumber
\end{eqnarray}
Fix $t>0$ and $n>m>t$. It follows that
\begin{eqnarray}
&&e^{2\int_{0}^{t}a(u)du}|Y^{n}_{x}(t)-Y^{m}_{x}(t)|^{2}+\int_{t}^{\infty}e^{2\int_{0}^{s}a(u)du}\langle A(X(s))(Z^{n}_{x}(s)-Z^{m}_{x}(s)),(Z^{n}_{x}(s)-Z^{m}_{x}(s))\rangle ds\nonumber\\
&=& -2\int_{t}^{n}a(s)e^{2\int_{0}^{s}a(u)du}|Y^{n}_{x}(s)-Y^{m}_{x}(s)|^{2}ds\nonumber\\
&&+2\int_{t}^{n}e^{2\int_{0}^{s}a(u)du}(Y^{n}_{x}(s)-Y^{m}_{x}(s))(g(s,Y^{n}_{x}(s),Z^{n}_{x}(s))
-g(s,Y^{m}_{x}(s),Z^{m}_{x}(s)))ds\nonumber\\
&&+2\int_{m}^{n}e^{2\int_{0}^{s}a(u)du}(Y^{n}_{x}(s)-Y^{m}_{x}(s))g(s,0,0)ds\nonumber\\
&&-2\int_{t}^{n}e^{2\int_{0}^{s}a(u)du}(Y^{n}_{x}(s)-Y^{m}_{x}(s))<Z^{n}_{x}(s)-Z^{m}_{x}(s),dM_{x}(t)>\nonumber
\end{eqnarray}
Choose two positive numbers $\delta_{1}$ and $\delta_{2}$ such that $\delta_{1}>\frac{1}{2\lambda}$ and
$\delta_{1}+\delta_{2}<\delta$.
Then  from
\begin{eqnarray}
&&2\int_{t}^{n}e^{2\int_{0}^{s}a(u)du}(Y^{n}_{x}(s)-Y^{m}_{x}(s))(g(s,Y^{n}_{x}(s),Z^{n}_{x}(s))
-g(s,Y^{m}_{x}(s),Z^{m}_{x}(s)))ds\nonumber\\
&\leq&-2\int_{t}^{n}a_{1}(s)e^{2\int_{0}^{s}a(u)du}|Y^{n}_{x}(s)-Y^{m}_{x}(s)|^{2}ds\nonumber\\
&&+2\delta_{1}a_{2}^{2}\int_{t}^{n}e^{2\int_{0}^{s}a(u)du}|Y^{n}_{x}(s)-Y^{m}_{x}(s)|^{2}ds\nonumber\\
&&+\frac{1}{2\lambda\delta_{1}}\int_{t}^{n}e^{2\int_{0}^{s}a(u)du}\langle A(X(s))(Z^{n}_{x}(s)-Z^{m}_{x}(s)),(Z^{n}_{x}(s)-Z^{m}_{x}(s))\rangle ds\nonumber
\end{eqnarray}
and
\begin{eqnarray}
&&2\int_{m}^{n}e^{2\int_{0}^{s}a(u)du}(Y^{n}_{x}(s)-Y^{m}_{x}(s))g(s,0,0)ds\nonumber\\
&\leq&2\delta_{2}a_{2}^{2}\int_{m}^{n}e^{2\int_{0}^{s}a(u)du}|Y^{n}_{x}(s)-Y^{m}_{x}(s)|^{2}ds\nonumber
+\frac{1}{2\delta_{2}a_{2}^{2}}\int_{m}^{n}e^{2\int_{0}^{s}a(u)du}|g(s,0,0)|^{2}ds,\nonumber
\end{eqnarray}
it follows that
\begin{eqnarray*}
&&E_{x}[e^{2\int_{0}^{t}a(u)du}|Y^{n}_{x}(t)-Y^{m}_{x}(t)|^{2}]
+\frac{1}{\lambda}(1-\frac{1}{2\lambda\delta_{1}})E_{x}[\int_{t}^{\infty}e^{2\int_{0}^{s}a(u)du}|Z^{n}_{x}(s)-Z^{m}_{x}(s)|^{2}ds]\nonumber\\
&&\leq\frac{1}{2\delta_{2}a_{2}^{2}}E_{x}[\int_{m}^{n}e^{2\int_{0}^{s}a(u)du}|g(s,0,0)|^{2}ds].
\end{eqnarray*}
This implies that
\begin{eqnarray}
&&E_{x}[\int_{0}^{\infty}e^{2\int_{0}^{s}a(u)du}|Z^{n}_{x}(s)-Z^{m}_{x}(s)|^{2}ds]\rightarrow0,\quad as \quad m,n\rightarrow\infty.\nonumber
\end{eqnarray}
Hence there exists $\tilde{Z}_{x}$ such that
\begin{eqnarray}
\tilde{Z}_{x}=\lim_{n\rightarrow\infty}e^{\int_{0}^{\cdot}a(u)du}Z^{n}_{x}
\quad in \quad L^{2}([0,\infty)\times\Omega).\nonumber
\end{eqnarray}
At the same time,  we also obtain the following estimates:
\begin{eqnarray}
&&\sup_{t}e^{2\int_{0}^{t}a(u)du}|Y^{n}_{x}(t)-Y^{m}_{x}(t)|^{2}\nonumber\\
&\leq& \frac{1}{2\delta_{2}a_{2}^{2}}\int_{m}^{n}e^{2\int_{0}^{s}a(u)du}|g(s,0,0)|^{2}ds \nonumber\\
&&+2\sup_{t}|\int_{t}^{n}e^{2\int_{0}^{s}a(u)du}(Y^{n}_{x}(s)-Y^{m}_{x}(s))<Z^{n}_{x}(s)-Z^{m}_{x}(s),dM_{x}(t)>|.\nonumber
\end{eqnarray}
Taking expectation on both sides of the above inequality, by BDG inequality, we obtain
\begin{eqnarray}
&&E_{x}[\sup_{t}e^{2\int_{0}^{t}a(u)du}|Y^{n}_{x}(t)-Y^{m}_{x}(t)|^{2}]\nonumber\\
&\leq&\frac{1}{2\delta_{2}a_{2}^{2}}E_{x}[\int_{m}^{n}e^{2\int_{0}^{s}a(u)du}|g(s,0,0)|^{2}ds]\nonumber\\
&&+C_{1}E_{x}[\{\int_{t}^{n}e^{4\int_{0}^{s}a(u)du}|Y^{n}_{x}(s)-Y^{m}_{x}(s)|^{2}|Z^{n}_{x}(s)-Z^{m}_{x}(s)|^{2}ds\}^{\frac{1}{2}}]\nonumber\\
&\leq& \frac{1}{2\delta_{2}a_{2}^{2}}E_{x}[\int_{m}^{n}e^{2\int_{0}^{s}a(u)du}|g(s,0,0)|^{2}ds]
+ \frac{1}{2}E_{x}[\sup_{t}e^{2\int_{0}^{t}a(u)du}|Y^{n}_{x}(t)-Y^{m}_{x}(t)|^{2}]\nonumber\\
&&+C_{2}E_{x}[\int_{0}^{\infty}e^{2\int_{0}^{s}a(u)du}|Z^{n}_{x}(s)-Z^{m}_{x}(s)|^{2}ds]\nonumber
\end{eqnarray}
Thus
\begin{eqnarray}
&&E_{x}[\sup_{t}e^{2\int_{0}^{t}a(u)du}|Y^{n}_{x}(t)-Y^{m}_{x}(t)|^{2}]\nonumber\\
&\leq& \frac{1}{\delta_{2}a_{2}^{2}}E_{x}[\int_{m}^{n}e^{2\int_{0}^{s}a(u)du}|g(s,0,0)|^{2}ds]+
2C_{2}E_{x}[\int_{0}^{\infty}e^{2\int_{0}^{s}a(u)du}|Z^{n}_{x}(s)-Z^{m}_{x}(s)|^{2}ds]\nonumber\\
&\rightarrow& 0,\quad as\quad m,n\rightarrow\infty.\nonumber
\end{eqnarray}
So, there exists $\{\tilde{Y}_{x}(t)\}$ such that
\begin{eqnarray}
\lim_{n\rightarrow\infty}E_{x}[\sup_{t}|\tilde{Y}_{x}(t)-e^{\int_{0}^{t}a(u)du}Y^{n}_{x}(t)|^{2}]=0.\nonumber
\end{eqnarray}
For any $\varepsilon>0$, there exist a positive number $N$ such that for any $n\geq N$,
$$
E_{x}[\sup_{t}|\tilde{Y}_{x}(t)-e^{\int_{0}^{t}a(u)du}Y^{n}_{x}(t)|^{2}]<\frac{\varepsilon}{2}.
$$
For $t>N$, noticing $Y^{N}_{x}(t)=0$,  it follows that
\begin{eqnarray}
E_{x}[|\tilde{Y}_{x}(t)|^{2}]&\leq& 2E_{x}[|\tilde{Y}_{x}(t)-e^{\int_{0}^{t}a(u)du}|Y^{N}_{x}(t)|^{2}]+2E_{x}[e^{2\int_{0}^{t}a(u)du}|Y^{N}_{x}(t)|^{2}]\nonumber\\
&\leq&2E_{x}[\sup_{t}|\tilde{Y}_{x}(t)-e^{\int_{0}^{t}a(u)du}|Y^{N}_{x}(t)|^{2}]+2E_{x}[e^{2\int_{0}^{t}a(u)du}|Y^{N}_{x}(t)|^{2}]\nonumber\\
&<&\varepsilon.\nonumber
\end{eqnarray}
Thus we have $\displaystyle{\lim_{t\rightarrow 0}}E_{x}[|\tilde{Y}_{x}(t)|^{2}]=0$.\\
By chain rule, it is easy to see from $(\ref{BSDE Yn})$ that
$$
{Y}_{x}(t)=e^{-\int_{0}^{t}a(u)du}\tilde{Y}_{x}(t) \quad and \quad {Z}_{x}(t)=e^{-\int_{0}^{t}a(u)du}\tilde{Z}_{x}(t)
$$
satisfy the equation $(\ref{0 terminal value equation})$
and
$$
\lim_{t\rightarrow \infty}E_{x}[e^{2\int_{0}^{t}a(u)du}|Y_{x}(t)|^{2}]=\lim_{t\rightarrow \infty}E_{x}[|\tilde{Y}_{x}(t)|^{2}]=0.
$$
From the above proof, we also see that $(\ref{estimates basic BSDE})$ holds.\\
Uniqueness:\\
Suppose that $(Y^{1}_{x},Z^{1}_{x})$ and $(Y^{2}_{x},Z^{2}_{x})$ are two solutions of  the equation $(\ref{0 terminal value equation})$.\\
Set $\bar{Y}_{x}(t)=Y^{1}_{x}(t)-Y^{2}_{x}(t)$ and $\bar{Z}_{x}(t)=Z^{1}_{x}(t)-Z^{2}_{x}(t)$ . Then
\begin{eqnarray}
d(e^{\int_{0}^{t}a(u)du}\bar{Y}_{x}(t))
&=&-e^{\int_{0}^{t}a(u)du}(g(t,Y^{1}_{x}(t),Z^{1}_{x}(t))-g(t,Y^{2}_{x}(t),Z^{2}_{x}(t)))dt\nonumber\\
&+&a(t)e^{\int_{0}^{t}a(u)du}\bar{Y}_{x}(t)dt\nonumber\\
&+& e^{\int_{0}^{t}a(u)du}\langle\bar{Z}_{x}(t),dM_{x}(t)\rangle.
\end{eqnarray}
 By Ito's formula, we get, for any $t<T$,
\begin{eqnarray}
&&e^{2\int_{0}^{t}a(u)du}|\bar{Y}_{x}(t)|^{2}+\int_{t}^{T}e^{2\int_{0}^{t}a(u)du}\langle A(X(s))\bar {Z}_{x}(s),\bar {Z}_{x}(s)\rangle ds\nonumber\\
&=&e^{2\int_{0}^{T}a(u)du}|\bar{Y}_{x}(T)|^{2}+2\int_{t}^{T}e^{2\int_{0}^{s}a(u)du}\bar{Y}_{x}(s)(g(s,Y^{1}_{x}(s),Z^{1}_{x}(s))-g(s,Y^{2}_{x}(s),Z^{2}_{x}(s)))ds\nonumber\\
&&-2\int_{t}^{T}a(s)e^{2\int_{0}^{s}a(u)du}|\bar{Y}_{x}(s)|^{2}ds\nonumber\\
&&-2\int_{t}^{T}a(s)e^{2\int_{0}^{s}a(u)du}\bar{Y}_{x}(s)\langle\bar{Z}_{x}(s),dM_{x}(s)\rangle
\end{eqnarray}
By condition (A.1) and (A.2), we have
\begin{eqnarray}
&&2\int_{t}^{T}e^{2\int_{0}^{s}a(u)du}\bar{Y}_{x}(s)(g(s,Y^{1}_{x}(s),Z^{1}_{x}(s))-g(s,Y^{2}_{x}(s),Z^{2}_{x}(s)))ds\nonumber\\
&=&2\int_{t}^{T}e^{2\int_{0}^{s}a(u)du}\bar{Y}_{x}(s)(g(s,Y^{1}_{x}(s),Z^{1}_{x}(s))-g(s,Y^{2}_{x}(s),Z^{1}_{x}(s)))ds\nonumber\\
&+&2\int_{t}^{T}e^{2\int_{0}^{s}a(u)du}\bar{Y}_{x}(s)(g(s,Y^{2}_{x}(s),Z^{1}_{x}(s))-g(s,Y^{2}_{x}(s),Z^{2}_{x}(s)))ds\nonumber\\
&\leq& -2\int_{t}^{T}a_{1}(s)e^{2\int_{0}^{s}a(u)du}|\bar{Y}_{x}(s)|^{2}ds
+a_{2}\int_{t}^{T}e^{2\int_{0}^{s}a(u)du}\bar{Y}_{x}(s)|\bar{Z}_{x}(s)|ds\nonumber\\
&\leq& -2\int_{t}^{T}a_{1}(s)e^{2\int_{0}^{s}a(u)du}|\bar{Y}_{x}(s)|^{2}ds
+c'a_{2}\int_{t}^{T}e^{2\int_{0}^{s}a(u)du}|\bar{Y}_{x}(s)|^{2}ds\nonumber\\
&&+a_{2}\frac{1}{c'\lambda}\int_{t}^{T}e^{2\int_{0}^{s}a(u)du}|\bar{Z}_{x}(s)|^{2}ds.
\end{eqnarray}
Choosing $c'=2\delta a_{2}$, we obtain
\begin{eqnarray}
&&|e^{\int_{0}^{t}a(u)du}\bar{Y}_{x}(t)|^{2}+(1-\frac{1}{2\delta\lambda})\int_{t}^{T}e^{2\int_{0}^{t}a(u)du}\langle A(X(s))\bar {Z}_{x}(s),\bar {Z}_{x}(s)\rangle ds\nonumber\\
&\leq& e^{2\int_{0}^{T}a(u)du}|\bar{Y}_{x}(T)|^{2}-2\int_{t}^{T}a(s)e^{2\int_{0}^{s}a(u)du}\bar{Y}_{x}(s)\langle\bar{Z}_{x}(s),dM_{x}(s)\rangle
\end{eqnarray}
Taking expectation on both sides of the above inequality, we get that, for any $t<T$,
$$
E_{x}[e^{2\int_{0}^{t}a(u)du}|\bar{Y}_{x}(t)|^{2}]\leq E_{x}[e^{2\int_{0}^{T}a(u)du}|\bar{Y}_{x}(T)|^{2}].
$$
For both $Y^{1}$ and $Y^{2}$ satisfy the terminal condition in $(\ref{0 terminal value equation})$, so that
$$
\lim_{T\rightarrow \infty}E_{x}[e^{2\int_{0}^{T}a(u)du}|\bar{Y}_{x}(T)|^{2}]=0,
$$
which leads to $E_{x}[e^{2\int_{0}^{t}a(u)du}|\bar{Y}_{x}(t)|^{2}]=0$.\\
We conclude that $Y^{1}_{x}(t)=Y^{2}_{x}(t)$ and $Z^{1}_{x}(t)=Z^{2}_{x}(t)$.  $\square$
\end{proof}
\\\\

We now want to apply Lemma $\ref{lemma basic BSDE}$ to a particular situation.\\
\noindent{Let} $F(x,y,z):R^{d}\times R\times R^{d}\rightarrow R$ be a Borel measurable function. Consider the  following conditions:\\
\textbf{(D.1)} $(y_{1}-y_{2})(F(x,y_{1},z)-F(x,y_{2},z))\leq -d_{1}(x)|y_{1}-y_{2}|^{2}$,\\
\textbf{(D.2)} $|F(x,y,z_{1})-F(x,y,z_{2})|\leq d_{2}|z_{1}-z_{2}|$,\\
\textbf{(D.3)} $|F(x,y,z)|\leq |F(x,0,z)|+ K(x)(1+|y|)$.\\
Set $d(x)=-d_{1}(x)+\delta d^{2}_{2}$ for some constant $\delta>\frac{1}{2\lambda}$.\\
The follows result follows from Lemma $\ref{lemma basic BSDE}$.
\begin{lemma}
Assume the conditions (D.1)-(D.3)  and
$$
E_{x}[\int_{0}^{\infty}e^{2\int_{0}^{t}d(X(u))du}|F(X(t),0,0)|^{2}dt]<\infty.
$$
Then there exists a unique solution $(Y_{x}(t),Z_{x}(t))$ to the following equation:
\begin{eqnarray}
\label{BSDE infinity horizon}
&&Y_{x}(t)=Y_{x}(T)+\int_{t}^{T}F(X(s),Y_{x}(s),Z_{x}(s))ds-\int_{t}^{T}<Z_{x}(s),dM_{x}(s)>,\quad t<T;\nonumber\\
&&\lim_{t\rightarrow\infty}e^{\int_{0}^{t}d(X(u))du}Y_{x}(t)=0,\quad in\quad L^{2}(\Omega).
\end{eqnarray}
\end{lemma}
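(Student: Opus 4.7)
The plan is to derive this as a direct corollary of Lemma \ref{lemma basic BSDE} by specializing the generator. I would define
$$g(\omega,t,y,z) := F(X(t)(\omega),y,z),$$
so that the equation (\ref{BSDE infinity horizon}) becomes exactly the equation (\ref{0 terminal value equation}) for this particular $g$, and it only remains to verify the hypotheses of Lemma \ref{lemma basic BSDE}.

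First I would check that (A.1)--(A.3) follow from (D.1)--(D.3) with the choices
$$a_1(t) = d_1(X(t)), \qquad a_2 = d_2, \qquad a_3(t) = K(X(t)) + d_2.$$
The monotonicity (A.1) and the Lipschitz condition (A.2) are immediate from (D.1) and (D.2) evaluated along the path of $X$. Progressive measurability of $a_1$ and $a_3$, and of $g$, is inherited from the progressive measurability of $X$. The growth condition (A.3) requires a small manipulation: combining (D.2) with (D.3) gives
$$|F(X(t),y,z)| \leq |F(X(t),0,z)| + K(X(t))(1+|y|) \leq |F(X(t),0,0)| + d_2|z| + K(X(t))(1+|y|),$$
which controls $g$ in the sense needed for the construction (the $|z|$-term can be absorbed using (A.2) in the finite-horizon existence step invoked via Theorem 3.1 of \cite{Z}).

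With these identifications, the process $a(t)$ of Lemma \ref{lemma basic BSDE} becomes $-d_1(X(t)) + \delta d_2^2 = d(X(t))$, so the integrability hypothesis
$$E_x\Bigl[\int_0^\infty e^{2\int_0^t d(X(u))du}|F(X(t),0,0)|^2 dt\Bigr] < \infty$$
is exactly $E_x[\int_0^\infty e^{2\int_0^t a(u)du}|g(t,0,0)|^2 dt] < \infty$, and the terminal conditions in (\ref{0 terminal value equation}) and (\ref{BSDE infinity horizon}) coincide. Lemma \ref{lemma basic BSDE} then yields the desired existence and uniqueness of $(Y_x,Z_x)$, along with the corresponding $L^2$-estimates analogous to (\ref{estimates basic BSDE}).

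I do not expect a serious obstacle here; the only subtlety is the cosmetic mismatch between the Lipschitz-type growth in (A.3) and the form of (D.3), which is resolved by the single-line bound above and a harmless redefinition of $a_3(t)$. Everything else is a routine translation between deterministic-in-$x$ hypotheses on $F$ and progressively-measurable-in-$\omega$ hypotheses on $g$ along the diffusion path.
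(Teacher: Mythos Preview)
Your proposal is correct and is exactly the approach the paper takes: the paper simply states that this lemma ``follows from Lemma~\ref{lemma basic BSDE}'' without writing out a separate proof, and your specialization $g(\omega,t,y,z)=F(X(t)(\omega),y,z)$ with $a_1(t)=d_1(X(t))$, $a_2=d_2$, $a(t)=d(X(t))$ is precisely the intended translation. Your remark about the cosmetic mismatch between (D.3) and (A.3) is accurate and harmless, since (A.3) is only invoked to guarantee the finite-horizon solutions via Theorem~3.1 of \cite{Z}, where linear growth in $z$ is already covered by the Lipschitz condition.
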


\vspace{4mm}
Consider the following condition instead of $(D.3)$.\\
$\textbf{(D.3)}'$  $|F(X(t),y,z)|\leq K(t)$, for any $y\in R$ and $z\in R^{d}$.
\vskip 0.3cm
Let $\Phi$ be a bounded measurable function defined on $\partial D$, and function $\tilde{q}\in L^{p}(D)$, for $p>\frac{d}{2}$. \\
The following theorem is the main result in this section.
\begin{theorem}
\label{thm 1}
Assume the conditions (D.1), (D.2) and (D.3)',
\begin{eqnarray}
\label{theorem: main bsde int.condition}
E_{x_{0}}[\int_{0}^{\infty}e^{\int_{0}^{s}\tilde{q}(X(u))du}dL_{s}]<\infty\nonumber
\end{eqnarray}
for some $x_{0}\in D$ and for $x\in D$,
\begin{eqnarray}
\label{integrability}
E_{x}[\int_{0}^{\infty}e^{2\int_{0}^{t}d(X(u))du}\{e^{2\int_{0}^{t}\tilde{q}(X(u))du}+|K(t)|^{2}\}dt]<\infty.
\end{eqnarray}
Then there exists a unique solution $(Y_{x},Z_{x})$ to the following BSDE:
\begin{eqnarray}
\label{bsde}
Y_{x}(t)&=&Y_{x}(T)+\int_{t}^{T}F(X(s),Y_{x}(s),Z_{x}(s))ds-\int_{t}^{T}e^{\int_{0}^{s}\tilde{q}(X(u))dt}\Phi(X(s))dL_{s}\nonumber\\
&&-\int_{t}^{T}\langle Z_{x}(s),dM_{x}(s)\rangle,\quad for \quad t<T,
\end{eqnarray}
and
\begin{eqnarray}
\label{uniqueness condition}
\lim_{t\rightarrow \infty}e^{\int_{0}^{t}d(X(u))du}Y_{t}=0 \quad in \quad L^{2}(\Omega).
\end{eqnarray}
\end{theorem}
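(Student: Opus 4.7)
The plan is to mirror the proof of Lemma \ref{lemma basic BSDE}: solve (\ref{bsde}) on the truncated interval $[0,n]$ with terminal value $0$ and then pass to the limit $n\to\infty$ in the weighted norm with weight $\rho(t):=e^{\int_{0}^{t}d(X(u))du}$. Writing $\varphi(s):=e^{\int_{0}^{s}\tilde{q}(X(u))du}\Phi(X(s))$ for brevity, I would handle the finite-horizon BSDE on $[0,n]$ with terminal $Y^{n}_{x}(n)=0$ by absorbing the local-time driver via the martingale $\eta^{n}(t):=E_{x}\bigl[\int_{t}^{n}\varphi(s)dL_{s}\,\big|\,\mathcal{F}_{t}\bigr]$: its representation $\eta^{n}(t)=\eta^{n}(0)+\int_{0}^{t}\langle\zeta^{n}(s),dM_{x}(s)\rangle-\int_{0}^{t}\varphi(s)dL_{s}$ makes $(\tilde{Y}^{n},\tilde{Z}^{n}):=(Y^{n}_{x}+\eta^{n},Z^{n}_{x}+\zeta^{n})$ satisfy a local-time-free BSDE with terminal $0$ at time $n$, whose generator $G^{n}(t,y,z):=F(X(t),y-\eta^{n}(t),z-\zeta^{n}(t))$ inherits (D.1)--(D.2) and is bounded by $K(t)$ at $(0,0)$ by (D.3)$'$; existence and uniqueness of $(Y^{n}_{x},Z^{n}_{x})$ then follow from standard finite-horizon BSDE theory (Theorem 3.1 of \cite{Z}) applied to the transformed equation.

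For the Cauchy estimate I would fix $n>m$ and exploit the fact that on $[0,m]$ the $\varphi\,dL$ driver is identical in the equations for $Y^{n}_{x}$ and $Y^{m}_{x}$, so that $\bar{Y}:=Y^{n}_{x}-Y^{m}_{x}$ and $\bar{Z}:=Z^{n}_{x}-Z^{m}_{x}$ satisfy
\begin{eqnarray*}
\bar{Y}(t)=Y^{n}_{x}(m)+\int_{t}^{m}\bigl[F(X,Y^{n}_{x},Z^{n}_{x})-F(X,Y^{m}_{x},Z^{m}_{x})\bigr]ds-\int_{t}^{m}\langle\bar{Z},dM_{x}\rangle
\end{eqnarray*}
with no local-time contribution. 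An application of It\^o's formula to $\rho(t)^{2}|\bar{Y}(t)|^{2}$ on $[0,m]$, together with (D.1)--(D.2), the definition $d=-d_{1}+\delta d_{2}^{2}$, and (\ref{elliptic constant}), reproduces the computation of Lemma \ref{lemma basic BSDE} and should yield
\begin{eqnarray*}
E_{x}[\rho(t)^{2}|\bar{Y}(t)|^{2}]+c_{1}E_{x}\Bigl[\int_{t}^{m}\rho^{2}|\bar{Z}|^{2}ds\Bigr]\leq E_{x}[\rho(m)^{2}|Y^{n}_{x}(m)|^{2}]
\end{eqnarray*}
for some $c_{1}>0$. To drive the right-hand side to zero uniformly in $n\geq m$ as $m\to\infty$, I would apply It\^o to $\rho^{2}|Y^{n}_{x}|^{2}$ on $[m,n]$, using (D.1) with $y_{2}=0$ together with $|F(X,0,z)|\leq K(t)$ from (D.3)$'$ to dominate the $ds$-integrand, and Young's inequality on the cross term $2\rho^{2}|Y^{n}_{x}|\varphi\,dL_{s}$ to isolate an absorbable $\rho^{2}|Y^{n}_{x}|^{2}$ piece from a residual $\rho^{2}e^{2\int_{0}^{s}\tilde{q}}\|\Phi\|_{\infty}^{2}\,dL_{s}$, whose expectation is controlled via (\ref{integrability}) combined with the boundary integrability $E_{x_{0}}[\int_{0}^{\infty}e^{\int_{0}^{s}\tilde{q}}dL_{s}]<\infty$ transported to $x$ by the Markov property.

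Calling $(Y_{x},Z_{x})$ the resulting Cauchy limit, the integral form (\ref{bsde}) would follow by passing to the limit on each finite $[t,T]$, and the uniform tail estimate would deliver the terminal condition (\ref{uniqueness condition}). For uniqueness, the $\varphi\,dL$ term again cancels in the difference of any two solutions, so the It\^o/Gronwall argument of Lemma \ref{lemma basic BSDE} should apply essentially verbatim to force $\bar{Y}\equiv 0$. The hard part is the tail estimate on $E_{x}[\rho(m)^{2}|Y^{n}_{x}(m)|^{2}]$: one must control an integral, against the singular boundary measure $dL_{s}$, of a product of the weighted solution $\rho Y^{n}_{x}$ with the (possibly unbounded) exponential factor $\varphi$, and here the two integrability hypotheses of the theorem genuinely interact -- neither alone suffices.
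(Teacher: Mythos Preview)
Your uniqueness argument and your reduction of the finite-horizon problem via the shift $\eta^{n}$ are fine, and indeed the paper uses exactly the same shift idea. The gap is in the tail estimate, precisely where you flag it as ``the hard part''. Your proposed Young inequality on the cross term $2\rho^{2}|Y^{n}_{x}|\,\varphi\,dL_{s}$ produces two pieces, and neither behaves as you claim. The ``absorbable'' piece $\epsilon\int_{m}^{n}\rho^{2}|Y^{n}_{x}|^{2}\,dL_{s}$ is an integral against the boundary measure $dL_{s}$, whereas every favourable term on the left of the It\^o identity (coming from $d=-d_{1}+\delta d_{2}^{2}$ and from $\langle AZ,Z\rangle$) is a $ds$-integral; there is nothing to absorb it into. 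The ``residual'' piece $\tfrac{1}{\epsilon}\int_{m}^{n}\rho^{2}e^{2\int_{0}^{s}\tilde{q}}\,dL_{s}$ is a weighted boundary integral, but the hypothesis (\ref{integrability}) only controls the corresponding $dt$-integral, and the boundary hypothesis only controls $\int e^{\int_{0}^{s}\tilde{q}}\,dL_{s}$ with a single (not squared) exponential and no $\rho^{2}$; there is no obvious way to combine them into the estimate you need.

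The paper avoids this difficulty by performing the shift once at the infinite horizon rather than at each level $n$: it sets
\[
p_{x}(t):=-E_{x}\Bigl[\int_{t}^{\infty}e^{\int_{0}^{s}\tilde{q}(X(u))du}\Phi(X(s))\,dL_{s}\,\Big|\,\mathcal{F}_{t}\Bigr],
\]
so that $(p_{x},q_{x})$ already satisfies $dp_{x}=\varphi\,dL_{t}+\langle q_{x},dM_{x}\rangle$. The Markov property gives the pointwise bound $|p_{x}(t)|\le M\,e^{\int_{0}^{t}\tilde{q}(X(u))du}$ with $M:=\|\Phi\|_{\infty}\sup_{y}E_{y}[\int_{0}^{\infty}e^{\int_{0}^{s}\tilde{q}}dL_{s}]$, and then (\ref{integrability}) alone (the $dt$-hypothesis) yields $\rho(t)p_{x}(t)\to 0$ in $L^{2}$. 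The remaining unknown $k_{x}:=Y_{x}-p_{x}$ solves a BSDE with \emph{no} local-time term and generator $g(t,y,z)=F(X(t),y+p_{x}(t),z+q_{x}(t))$, for which $|g(t,0,0)|\le K(t)$ by (D.3)$'$; Lemma~\ref{lemma basic BSDE} then applies directly. In your framework the same fix is available: carry out the Cauchy estimate on $\tilde{Y}^{n}=Y^{n}_{x}+\eta^{n}$ rather than on $Y^{n}_{x}$. Since $|\eta^{n}(t)|\le M\,e^{\int_{0}^{t}\tilde{q}}$ uniformly in $n$ and $|G^{n}(t,0,0)|\le K(t)$ uniformly in $n$, the Lemma~\ref{lemma basic BSDE} computation gives $E_{x}[\rho(m)^{2}|\tilde{Y}^{n}(m)|^{2}]\le C\,E_{x}[\int_{m}^{\infty}\rho^{2}K^{2}\,ds]\to 0$, and $E_{x}[\rho(m)^{2}|\eta^{n}(m)|^{2}]\le M^{2}E_{x}[\rho(m)^{2}e^{2\int_{0}^{m}\tilde{q}}]\to 0$, both uniformly in $n\ge m$.
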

\begin{proof}\\
Uniqueness:\\
Suppose that $(Y^{1}_{x},Z^{1}_{x})$ and $(Y^{2}_{x},Z^{2}_{x})$ are two solutions of  the equation $(\ref{bsde})$ satisfying $(\ref{uniqueness condition})$.\\
Set $\bar{Y}_{x}(t)=Y^{1}_{x}(t)-Y^{2}_{x}(t)$ and $\bar{Z}_{x}(t)=Z^{1}_{x}(t)-Z^{2}_{x}(t)$ . Then
\begin{eqnarray}
d(e^{\int_{0}^{t}d(X(u))du}\bar{Y}_{x}(t))
&=&-e^{\int_{0}^{t}d(X(u))du}(F(X(t),Y^{1}_{x}(t),Z^{1}_{x}(t))-F(X(t),Y^{2}_{x}(t),Z^{2}_{x}(t)))dt\nonumber\\
&+&d(X(t))e^{\int_{0}^{t}d(X(u))du}\bar{Y}_{x}(t)dt\nonumber\\
&+& e^{\int_{0}^{t}d(X(u))du}\langle\bar{Z}_{x}(t),dM_{x}(t)\rangle.
\end{eqnarray}
By Ito's formula, we get, for any $t<T$,
\begin{eqnarray}
&&e^{2\int_{0}^{t}d(X(u))du}|\bar{Y}_{x}(t)|^{2}+\int_{t}^{T}e^{2\int_{0}^{t}d(X(u))du}\langle A(X(s))\bar {Z}_{x}(s),\bar {Z}_{x}(s)\rangle ds\nonumber\\
&=&e^{2\int_{0}^{T}d(X(u))du}|\bar{Y}_{x}(T)|^{2}\nonumber\\
&+&2\int_{t}^{T}e^{2\int_{0}^{s}d(X(u))du}\bar{Y}_{x}(s)(F(X(s),Y^{1}_{x}(s),Z^{1}_{x}(s))-F(X(s),Y^{2}_{x}(s),Z^{2}_{x}(s)))ds\nonumber\\
&-&2\int_{t}^{T}d(X(s))e^{2\int_{0}^{s}d(X(u))du}|\bar{Y}_{x}(s)|^{2}ds\nonumber\\
&-&2\int_{t}^{T}d(X(s))e^{2\int_{0}^{s}d(X(u))du}\bar{Y}_{x}(s)\langle\bar{Z}_{x}(s),dM_{x}(s)\rangle
\end{eqnarray}
By (D.1) and (D.2), we have
\begin{eqnarray}
\label{uniqueness formula}
&&2\int_{t}^{T}e^{2\int_{0}^{s}d(X(u))du}\bar{Y}_{x}(s)(F(X(s),Y^{1}_{x}(s),Z^{1}_{x}(s))-F(X(s),Y^{2}_{x}(s),Z^{2}_{x}(s)))ds\nonumber\\
&=&2\int_{t}^{T}e^{2\int_{0}^{s}d(X(u))du}\bar{Y}_{x}(s)(F(X(s),Y^{1}_{x}(s),Z^{1}_{x}(s))-F(X(s),Y^{2}_{x}(s),Z^{1}_{x}(s)))ds\nonumber\\
&+&2\int_{t}^{T}e^{2\int_{0}^{s}d(X(u))du}\bar{Y}_{x}(s)(F(X(s),Y^{2}_{x}(s),Z^{1}_{x}(s))-F(X(s),Y^{2}_{x}(s),Z^{2}_{x}(s)))ds\nonumber\\
&\leq& -2\int_{t}^{T}d_{1}(X(s))e^{2\int_{0}^{s}d(X(u))du}|\bar{Y}_{x}(s)|^{2}ds
+d_{2}\int_{t}^{\infty}e^{2\int_{0}^{s}d(X(u))du}\bar{Y}_{x}(s)|\bar{Z}_{x}(s)|ds\nonumber\\
&\leq& -2\int_{t}^{T}d_{1}(X(s))e^{2\int_{0}^{s}d(X(u))du}|\bar{Y}_{x}(s)|^{2}ds
+cd_{2}\int_{t}^{T}e^{2\int_{0}^{s}d(X(u))du}|\bar{Y}_{x}(s)|^{2}ds\nonumber\\
&&+d_{2}\frac{1}{c\lambda}\int_{t}^{T}e^{2\int_{0}^{s}d(X(u))du}|\bar{Z}_{x}(s)|^{2}ds.
\end{eqnarray}
Choosing $c=2\delta d_{2}$, we obtain from $(\ref{uniqueness formula})$
\begin{eqnarray}
\label{uniqueness of Y Z}
&&|e^{-\int_{0}^{t}d(X(u))du}\bar{Y}_{x}(t)|^{2}+(1-\frac{1}{2\delta\lambda})\int_{t}^{T}e^{-2\int_{0}^{t}d(X(u))du}\langle A(X(s))\bar {Z}_{x}(s),\bar {Z}_{x}(s)\rangle ds\nonumber\\
&\leq& e^{2\int_{0}^{T}d(X(u))du}|\bar{Y}_{x}(T)|^{2} -2\int_{t}^{T}d(X(s))e^{-2\int_{0}^{s}d(X(u))du}\bar{Y}_{x}(s)\langle\bar{Z}_{x}(s),dM_{x}(s)\rangle
\end{eqnarray}
Taking expectation on both sides of the above inequality and letting T tend to infinity, we obtain that
$$
E_{x}[e^{2\int_{0}^{t}d(X(u))du}|\bar{Y}_{x}(t)|^{2}]=0
$$
We conclude that $Y^{1}_{x}(t)=Y^{2}_{x}(t)$ and hence from $(\ref{uniqueness of Y Z})$, $Z^{1}_{x}(t)=Z^{2}_{x}(t)$.\\\\
Existence:\\
First of all, the assumption $(\ref{theorem: main bsde int.condition})$ implies (see \cite{CZ1})
\begin{eqnarray*}
\sup_{x}E_{x}[\int_{0}^{\infty}e^{\int_{0}^{s}\tilde{q}(X(u))du}dL_{s}]<\infty.
\end{eqnarray*}
$1^{\circ}$: There exists $(p_{x}(t),q_{x}(t))$  such that
\begin{eqnarray}
\label{pq}
dp_{x}(t)=e^{\int_{0}^{t}\tilde{q}(X(u))du}\Phi(X(t))dL_{t}+<q_{x}(t),dM_{x}(t)>,
\end{eqnarray}
and $e^{\int_{0}^{t}d(X(u))du}p_{x}(t)\rightarrow 0$ as $t\rightarrow\infty$ in $L^{2}(\Omega)$.\\
In fact, let
\begin{eqnarray} p_{x}(t)&:=&-E_{x}[\int_{t}^{\infty}e^{\int_{0}^{s}\tilde{q}(X(u))du}\Phi(X(s))dL_{s}|\mathcal{F}_{t}]\nonumber\\
&=& \int_{0}^{t}e^{\int_{0}^{s}\tilde{q}(X(u))du}\Phi(X(s))L_{s}
-E_{x}[\int_{0}^{\infty}e^{\int_{0}^{s}\tilde{q}(X(u))du}\Phi(X(s))dL_{s}|\mathcal{F}_{t}].\nonumber\\
\end{eqnarray}
By the martingale representation theorem in $\cite{Z}$, there exists a process $q_{x}(t)$, such that
\begin{eqnarray}
\label{integrability of q}
-E_{x}[\int_{0}^{\infty}e^{\int_{0}^{s}\tilde{q}(X(u))du}\Phi(X(s))dL_{s}|\mathcal{F}_{t}]
=&-&E_{x}[\int_{0}^{\infty}e^{\int_{0}^{s}\tilde{q}(X(u))du}\Phi(X(s))dL_{s}]\nonumber\\
&+& \int_{0}^{t}<q_{x}(s),dM_{x}(s)>.
\end{eqnarray}
Then $(p_{x},q_{x})$ satisfies the equation $(\ref{pq})$.\\
Moreover,
\begin{eqnarray}
\label{formula of px}
p_{x}(t)&:=&-E_{x}[\int_{t}^{\infty}e^{\int_{0}^{s}\tilde{q}(X(u))du}\Phi(X(s))dL_{s}|\mathcal{F}_{t}]\nonumber\\
&=&-e^{\int_{0}^{t}\tilde{q}(X(u))du}E_{x}[\int_{t}^{\infty}e^{\int_{t}^{s}\tilde{q}(X(u))du}\Phi(X(s))dL_{s}|\mathcal{F}_{t}]\nonumber\\
&=&-e^{\int_{0}^{t}\tilde{q}(X(u))du}E_{x}[\int_{0}^{\infty}e^{\int_{t}^{s+t}\tilde{q}(X(u))du}\Phi(X(s+t))dL_{s+t}|\mathcal{F}_{t}]\nonumber\\
&=&-e^{\int_{0}^{t}\tilde{q}(X(u))du}E_{x}[\int_{0}^{\infty}e^{\int_{0}^{s}\tilde{q}(X(u+t))du}\Phi(X(s+t))dL_{s+t}|\mathcal{F}_{t}]\nonumber\\
&=&-e^{\int_{0}^{t}\tilde{q}(X(u))du}E_{X(t)}[\int_{0}^{\infty}e^{\int_{0}^{l}\tilde{q}(X(u))du}\Phi(X(l))dL_{l}]
\end{eqnarray}
The last equality follows from the fact that $L_{t+s}=L_{t}+L_{s}\circ \theta_{t}$. Therefore,
$$
\sup_{x}|p_{x}(t)|\leq e^{\int_{0}^{t}\tilde{q}(X(u))du} \sup_{x\in D}|\Phi(x)|\cdot\sup_{x\in \bar D}E_{x}[\int_{0}^{\infty}e^{\int_{0}^{t}\tilde{q}(X(u))du}dL_{t}].
$$
Set $\displaystyle{M=\sup_{x\in D}|\Phi(x)|\cdot\sup_{x\in \bar D}E_{x}[\int_{0}^{\infty}e^{\int_{0}^{t}\tilde{q}(X(u))du}dL_{t}]}$.\\
In view of $(\ref{integrability})$, we have $\displaystyle{\lim_{t\rightarrow \infty}}e^{\int_{0}^{t}(d+\tilde{q})(X(u))du}=0$ in $L^{2}(\Omega)$.\\
Hence,
\begin{eqnarray}
e^{\int_{0}^{t}d(X(u))du}p_{x}(t)\leq Me^{\int_{0}^{t}(d+\tilde{q})(X(u))du}\rightarrow 0 \quad as \quad t\rightarrow\infty, \quad in \quad L^{2}(\Omega).
\end{eqnarray}
$2^{\circ}:$  Set $g(t,y,z)=F(X(t),p_{x}(t)+y,q_{x}+z)$. Then
\begin{eqnarray}
\label{condition 1}
&&(y_{1}-y_{2})(g(t,y_{1},z)-g(t,y_{2},z))\nonumber\\
&=&(y_{1}-y_{2})(F(X(t),p_{x}(t)+y_{1},q_{x}+z)-F(X(t),p_{x}(t)+y_{2},q_{x}+z))\nonumber\\
&\leq&-d_{1}(X(t))|y_{1}-y_{2}|^{2}.
\end{eqnarray}
and
\begin{eqnarray}
|g(t,y,z_{1})-g(t,y,z_{2})|&=&|F(X(t),p_{x}(t)+y,q_{x}+z_{1})-F(X(t),p_{x}(t)+y,q_{x}+z_{1})|\nonumber\\
&\leq& d_{2}|z_{1}-z_{2}|.
\end{eqnarray}
Moreover,
\begin{eqnarray}
\label{L2 BSDE g(x,o.0) condition}
&&E_{x}[\int_{0}^{\infty}e^{2\int_{0}^{t}d(X(u))du}|g(X(t),0,0)|^{2}dt]\nonumber\\
&\leq& E_{x}[\int_{0}^{\infty}e^{2\int_{0}^{t}d(X(u))du}|F(X(t),p_{x}(t),q_{x}(t))|^{2}dt]\nonumber\\
&\leq& E_{x}[\int_{0}^{\infty}e^{2\int_{0}^{t}d(X(u))du}|K(t)|^{2}dt]\nonumber\\
&<& \infty.
\end{eqnarray}
 $g$ satisfies all the conditions of the Lemma 2.2. Hence, there exist processes $(k_{x},l_{x})$ such that
$$
dk_{x}(t)=-g(t,k_{x}(t),l_{x}(t))dt+<l_{x}(t),dM_{x}(t)>,
$$
and
$$
e^{\int_{0}^{t}d(X(u))du}k_{x}(t)\rightarrow 0,
$$
as $t\rightarrow\infty$.\\
Putting $Y_{x}(t)=p_{x}(t)+k_{x}(t)$ and $Z_{x}(t)=q_{x}(t)+l_{x}(t)$,  we find that $(Y_{x}(t),Z_{x}(t))$ satisfies the following equation
$$
dY_{x}(t)=e^{\int_{0}^{t}\tilde{q}(X(u))du}\phi(X(t))dL_{t}-F(t,Y_{x}(t),Z_{x}(t))dt+<Z_{x}(t),dM_{x}>.
$$
and
$$
\lim_{t\rightarrow \infty}e^{\int_{0}^{t}d(X(u))du}Y_{t}=0 .
$$
\end{proof}
\begin{corollary}
\label{cor. y0 upper bounds}
Suppose  all the assumptions in Theorem 2.1 hold. If, in addition,
\begin{eqnarray}
\sup_{x}E_{x}[\int_{0}^{\infty}e^{\int_{0}^{t}d(X(u))du}|K(t)|^{2}dt]<\infty,\nonumber
\end{eqnarray}
then it follows that
\begin{eqnarray}
\sup_{x\in D}|Y_{x}(0)|<\infty.\nonumber
\end{eqnarray}
\end{corollary}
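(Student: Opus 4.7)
The plan is to exploit the decomposition $Y_x(t)=p_x(t)+k_x(t)$ from the existence proof of Theorem \ref{thm 1} and estimate the two pieces at $t=0$ separately, uniformly in $x\in D$.

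For $p_x(0)$, specializing $(\ref{formula of px})$ to $t=0$ immediately yields
$$|p_x(0)|\le \sup_{y\in D}|\Phi(y)|\cdot\sup_{y\in\bar D}E_y\Big[\int_0^\infty e^{\int_0^l\tilde q(X(u))du}dL_l\Big]<\infty,$$
where the second supremum is finite by the uniform-in-starting-point strengthening of $(\ref{theorem: main bsde int.condition})$ borrowed from \cite{CZ1} and already invoked in the proof of Theorem \ref{thm 1}.

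For $k_x(0)$, I would apply It\^o's formula to $e^{2\int_0^t d(X(u))du}|k_x(t)|^2$, using $dk_x=-g(t,k_x,l_x)dt+\langle l_x,dM_x\rangle$ with $g(t,y,z)=F(X(t),p_x(t)+y,q_x(t)+z)$. Taking expectation (the stochastic integral is a true martingale by the $L^2$ estimates $(\ref{estimates basic BSDE})$ from Lemma \ref{lemma basic BSDE}), I would split $-2k_x g=-2k_x[g(s,k_x,l_x)-g(s,0,l_x)]-2k_x g(s,0,l_x)$, bound the first bracket below by $2d_1(X(s))|k_x|^2$ via (D.1), and bound $-2k_x g(s,0,l_x)$ below by $-2\delta d_2^2|k_x|^2-|K(s)|^2/(2\delta d_2^2)$ by combining (D.3)' with Young's inequality at parameter $1/(2\delta d_2^2)$. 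Since $d=-d_1+\delta d_2^2$, the $|k_x|^2$ coefficients cancel exactly; dropping the nonnegative term $\langle A(X(s))l_x,l_x\rangle$ then yields
$$|k_x(0)|^2\le E_x\big[e^{2\int_0^T d(X(u))du}|k_x(T)|^2\big]+\frac{1}{2\delta d_2^2}E_x\Big[\int_0^T e^{2\int_0^s d(X(u))du}|K(s)|^2ds\Big].$$
Letting $T\to\infty$, the first term vanishes by the $L^2$ terminal condition on $k_x$, and the second is bounded uniformly in $x$ by the additional hypothesis of the corollary. Combining with the bound on $p_x(0)$ completes the proof.

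The main technical point is the exact algebraic cancellation between $d=-d_1+\delta d_2^2$ and the contributions from (D.1) and Young's inequality, which forces the right-hand side of the It\^o identity to become free of $(k_x,l_x)$; the degenerate case $d_2=0$ makes $F$ independent of $z$ and is handled directly by taking conditional expectation in the BSDE for $k_x$ and invoking $|g|\le K(t)$.
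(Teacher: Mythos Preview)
Your proposal is correct and follows essentially the same route as the paper: use the decomposition $Y_x=p_x+k_x$ from the existence proof of Theorem~\ref{thm 1}, bound $p_x(0)$ via $(\ref{formula of px})$, and control $k_x(0)$ by applying It\^o's formula to $e^{2\int_0^{\cdot}d(X(u))du}|k_x|^2$ together with the monotonicity (D.1), Young's inequality, and the terminal condition. The only cosmetic difference is that the paper splits the Young parameter as $\delta_1+\delta_2<\delta$ (working through (D.2) and $g(s,0,0)$ as in Lemma~\ref{lemma basic BSDE}), whereas you apply (D.3)$'$ directly to $g(s,0,l_x)$ and use the full $\delta$, which gives the slightly cleaner exact cancellation you describe; the resulting bounds are the same up to the harmless constant $\delta$ versus $\delta_2$.
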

\begin{proof}\\
As shown in the proof of Theorem 2.1, $Y_{x}(t)$ has the  decomposition: $Y_{x}(t)=p_{x}(t)+k_{x}(t)$.\\
Setting $t=0$ in $(\ref{formula of px})$, it follows that
\begin{eqnarray}
|p_{x}(0)|&\leq& E_{X(t)}[|\int_{0}^{\infty}e^{\int_{0}^{l}\tilde{q}(X(u))du}\Phi(X(l))dL_{l}|]\nonumber\\
&\leq& \|\Phi\|_{\infty}\sup_{x}E_{x}[\int_{0}^{\infty}e^{\int_{0}^{l}\tilde{q}(X(u))du}dL_{l}]\nonumber\\
&<&\infty.
\end{eqnarray}
By Ito's formula, we obtain
\begin{eqnarray}
de^{2\int_{0}^{t}d(X(u))du}|k_{x}(t)|^{2}&=&-2e^{2\int_{0}^{t}d(X(u))du}k_{x}(t)g(t,k_{x}(t),l_{x}(t))dt\nonumber\\
&+&2e^{2\int_{0}^{t}d(X(u))du}k_{x}(t)d(X(t))dt+2e^{2\int_{0}^{t}d(X(u))du}k_{x}(t)<l_{x}(t),dM_{x}(t)>\nonumber\\
&+&e^{2\int_{0}^{t}d(X(u))du}\langle A(X(t))l_{x}(t),l_{x}(t)\rangle dt\nonumber
\end{eqnarray}
Choosing  two positive numbers $\delta_{1}$ and $\delta_{2}$ such that $\delta_{1}>\frac{1}{2\lambda}$ and
$\delta_{1}+\delta_{2}<\delta$, similar calculations as in the proof of Theorem 2.1 yield that, for any $t<T$,
\begin{eqnarray*}
&&E_{x}[e^{2\int_{0}^{t}d(X(u))du}|k_{x}(t)|^{2}]
+\frac{1}{\lambda}(1-\frac{1}{2\lambda\delta_{1}})E_{x}[\int_{t}^{T}e^{2\int_{0}^{s}d(X(u))du}|l_{x}(s)|^{2}ds]\nonumber\\
&\leq&E_{x}[e^{2\int_{t}^{T}d(X(u))du}|k_{x}(T)|^{2}]
+\frac{1}{2\delta_{2}d_{2}^{2}}E_{x}[\int_{t}^{T}e^{2\int_{0}^{s}d(X(u))du}|g(s,0,0)|^{2}ds].
\end{eqnarray*}
Setting $t=0$, we have
\begin{eqnarray}
|k_{x}(0)|^{2}= E_{x}[|k_{x}(0)|^{2}]&\leq&  E_{x}[e^{2\int_{0}^{T}d(X(u))du}|k_{x}(T)|^{2}]\nonumber\\
&+& \frac{1}{2\delta_{2}d_{2}^{2}}E_{x}[\int_{0}^{T}e^{2\int_{0}^{s}d(X(u))du}|g(s,0,0)|^{2}ds].\nonumber
\end{eqnarray}
Let  $T\rightarrow \infty$ to obtain that
\begin{eqnarray*}
\sup_{x}|k_{x}(0)|\leq \left(\frac{1}{2\delta_{2}d_{2}^{2}}\sup_{x}E_{x}[\int_{0}^{\infty}e^{2\int_{0}^{s}d(X(u))du}|g(s,0,0)|^{2}ds]\right)^{\frac{1}{2}}
<\infty,
\end{eqnarray*}
where the fact that $e^{2\int_{0}^{T}d(X(u))du}k_{x}(T)\rightarrow 0$ as $T\rightarrow \infty$, has been used.
Hence, we have  $\displaystyle{\sup_{x}|Y_{x}(0)|\leq \sup_{x}|p_{x}(0)|+\sup_{x}|k_{x}(0)|}<\infty$.
\end{proof}

\section{Linear PDEs}
\setcounter{equation}{0}
Set
$$
L_{2}=\frac{1}{2}\nabla\cdot(A\nabla ) +b\cdot \nabla +q
$$
where $b=(b_{1},...,b_{d})$  is a $R^{d}$-valued Borel measurable  function, and $q$ is a Borel measurable function on $R^{d}$ such that:
$$
I_{D}(|b|^{2}+|q|)\in L^{p}(D), \quad p>\frac{d}{2}.
$$
In this section, we solve the following linear boundary value problem:
\begin{eqnarray}
\label{simple}
\left\{\begin{array}{ll}
 \frac{1}{2}\nabla\cdot(A\nabla u)(x)+b\cdot \nabla u (x)+q(x)u(x)=F(x),&\textrm{on $D$}\\
\frac{1}{2}\frac{\partial u}{\partial{\gamma}}(x)=\phi &\textrm{on $\partial D$ },
\end{array}\right.
\end{eqnarray}
where $F$ and $\phi$ are bounded measurable functions on $D$.\\\\
It is well known that operator $L_{2}$ defined on a bounded domain D with Neumann boundary condition $\frac{\partial u}{\partial{\gamma}}(x)=0$ is associated with the quadratic form:
\begin{eqnarray}
\mathcal{E}(f,g):&=&-\int_{D}L_{2}f(x)g(x)dx \nonumber\\
&=&\frac{1}{2}\int_{D}\langle A\nabla f,\nabla g\rangle dx-\int_{D}b\cdot\nabla f (x)g(x)dx-\int_{D}q(x)f(x)g(x)dx\nonumber
\end{eqnarray}
\begin{definition} A bounded continuous function $u(x)$ defined on D is a weak solution of the problem $(\ref{simple})$ if $u\in W^{1,2}(D)$, and for any $g\in C^{\infty}(\overline{D})$,
\begin{eqnarray}
\mathcal{E}(u,g)=\int_{\partial D}\phi(x)g(x)\sigma(dx)-\int_{D}F(x)g(x)dx\nonumber,
\end{eqnarray}
where $\sigma$ denotes the $d-1$ dimensional Lebesgue measure on $\partial D$.
\end{definition}
Consider the operator
\begin{eqnarray}
L_{0}=\frac{1}{2}\nabla\cdot(A\nabla u)
\end{eqnarray}
on domain D with boundary condition $\frac{\partial u}{\partial {\gamma}}=0$ on $\partial D$. \\
$L_{0}$ is associated with a reflecting diffusion process $(X^{0},P^{0}_{x})$. By $\cite{LS}$, $X^{0}$ has the following decomposition:
\begin{eqnarray}
\label{decomposition of X0}
&&dX^{0}_{t}=\sigma(X^{0}_{t})dW_{t}+\frac{1}{2}\nabla A(X^{0}_{t})dt+\gamma (X^{0}_{t})dL^{0}_{t}, \nonumber\\
&&L^{0}_{t}=\int_{0}^{t}I_{\{X^{0}_{s}\in \partial D\}}dL^{0}_{s},
\end{eqnarray}
where the matrix $\sigma(x)$ is the positive definite symmetric square root of the matrix $A(x)$ and $\{W_{t}\}_{t>0}$ is a d-dimensional standard Brownian motion.\\
It is well known that operator $L_{0}$ is associated with the regular Dirichlet form:
$$
\mathcal{E}^{0}(u,v)=\frac{1}{2}\int_{D}a_{ij}\frac{\partial u}{\partial x_{i}}\frac{\partial v}{\partial x_{j}}dx
$$
and the domain of $\mathcal{E}^{0}$ is $W^{1,2}(D):=\{u\in L^{2}(D): \frac{\partial u}{\partial x_{i}}\in L^{2}(D) \}$.\\\\
The following lemma can be proved similarly as the Corollary 3.8 in $\cite{PEI}$ using the heat kernel estimates in $\cite{YZ}$.
\begin{lemma}
There exists a constant $K>0$, such that
$$
\sup_{x\in \bar{D}}E^{0}_{x}[L^{0}_{t}]\leq K\sqrt{t}\quad and \quad \inf_{x\in \bar{D}}E^{0}_{x}[L^{0}_{t}]>0.
$$
Moreover, we have $\sup_{x\in \bar{D}}E^{0}_{x}[(L^{0}_{t})^{n}]\leq K_{n}t^{\frac{n}{2}}$, for some constant $K_{n}>0$.
\end{lemma}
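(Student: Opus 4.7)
The plan is to derive all three estimates from the Revuz correspondence for the positive continuous additive functional $L^0_t$, together with the two-sided Gaussian heat kernel estimates for the reflecting diffusion $X^0$ proved in \cite{YZ}. Since $L^0_t$ is associated (up to a normalising constant) with the surface measure $\sigma$ on $\partial D$, one has
$$E^0_x[L^0_t] = \int_0^t \int_{\partial D} p^0_s(x,y)\,\sigma(dy)\,ds,$$
where $p^0_s(x,y)$ denotes the transition density of $X^0$ on $\bar D$. Using the upper bound $p^0_s(x,y)\le c_1 s^{-d/2}e^{-c_2|x-y|^2/s}$ from \cite{YZ} and a local-chart computation on the smooth compact $(d-1)$-dimensional manifold $\partial D$, one gets $\int_{\partial D} e^{-c_2|x-y|^2/s}\sigma(dy)\le C s^{(d-1)/2}$ uniformly in $x\in\bar D$. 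Hence $\int_{\partial D} p^0_s(x,y)\,\sigma(dy)\le C' s^{-1/2}$, and integrating in $s$ over $[0,t]$ produces the bound $\sup_{x\in\bar D}E^0_x[L^0_t]\le K\sqrt{t}$.

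For the lower bound $\inf_{x\in\bar D} E^0_x[L^0_t]>0$, I would apply the matching Gaussian lower estimate on $p^0_s$ from \cite{YZ}. Because $\bar D$ is compact and $\partial D$ is smooth, for any fixed $t>0$ one can choose $s_0\in(0,t]$ such that $\int_{\partial D} p^0_{s_0}(x,y)\,\sigma(dy)\ge c_0>0$ uniformly in $x\in\bar D$; integrating over a short interval around $s_0$ then gives a positive lower bound. (Equivalently, $x\mapsto E^0_x[L^0_t]$ is continuous and strictly positive on the compact set $\bar D$, so the infimum is attained and positive.)

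To obtain $\sup_{x\in\bar D}E^0_x[(L^0_t)^n]\le K_n t^{n/2}$, I would argue by induction on $n$ using the iterated integral representation
$$(L^0_t)^n = n!\int_{0<s_1<\cdots<s_n<t} dL^0_{s_1}\cdots dL^0_{s_n},$$
together with the Markov property and the additive-functional identity $L^0_{t}-L^0_s = L^0_{t-s}\circ\theta_s$. Conditioning on $\mathcal F_s$ in the innermost integral gives
$$E^0_x[(L^0_t)^n] = n\,E^0_x\!\left[\int_0^t E^0_{X^0_s}\!\bigl[(L^0_{t-s})^{n-1}\bigr]\,dL^0_s\right].$$
If inductively $\sup_y E^0_y[(L^0_r)^{n-1}]\le K_{n-1} r^{(n-1)/2}$, bounding $r\le t$ and invoking the first-moment estimate yields $E^0_x[(L^0_t)^n]\le nK_{n-1}K\,t^{n/2}$, so $K_n = n!\,K^n$ works.

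The main technical obstacle is the boundary heat kernel integral estimate in the first paragraph, namely reducing $\int_{\partial D} e^{-c_2|x-y|^2/s}\sigma(dy)$ to the correct power $s^{(d-1)/2}$ uniformly in $x\in\bar D$; this is where both the smoothness of $\partial D$ and the precise Gaussian bounds of \cite{YZ} enter. Once this estimate is in place, the remaining steps parallel the scheme of Corollary 3.8 in \cite{PEI}.
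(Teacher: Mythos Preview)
Your proposal is correct and follows precisely the route the paper indicates: the paper gives no detailed proof but simply states that the lemma ``can be proved similarly as the Corollary 3.8 in \cite{PEI} using the heat kernel estimates in \cite{YZ},'' which is exactly the Revuz-formula/Gaussian-bound/Markov-property-induction scheme you have written out. You have in fact supplied the details the paper omits, and your final remark that the argument parallels Corollary 3.8 of \cite{PEI} is the paper's entire proof.
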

\vspace{5mm}
\noindent{Set} $M^{0}_{t}=\int_{0}^{t}\sigma(X^{0}_{s})dW_{s}$ and
\begin{eqnarray}
Z_{t}=e^{\int^{t}_{0}<A^{-1}b(X^{0}_{s}),dM^{0}_{s}>-\frac{1}{2}\int_{0}^{t}bA^{-1}b^{*}(X^{0}_{s})ds+\int_{0}^{t}q(X^{0}_{s})ds},
\end{eqnarray}
where $b^{*}$ is the transpose of the row vector $b$.\\

The proof of  the following two lemmas are inspired by that of the Lemma 2.1 and Theorem 2.2 in $\cite{PEI}$.
\begin{lemma}
For $t>0$, there are two strictly positive functions $M_{1}(t)$ and $M_{2}(t)$ such that, for any $x\in \overline{D}$, $M_{1}(t)\leq E^{0}_{x}[\int_{0}^{t}Z_{s}dL^{0}_{s}]\leq M_{2}(t)$. Furthermore, $M_{2}(t)\rightarrow 0$ as $t\rightarrow 0$.
\end{lemma}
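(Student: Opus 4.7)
The plan is to establish the upper and lower bounds separately: the upper bound via a Hölder splitting of $Z$ and $L^0$, and the lower bound by localizing on the event that $Z$ stays close to $1$ on short times, then extending to all $t$ by monotonicity.

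\textbf{Upper bound.} I apply Hölder's inequality to get
$$E^0_x\left[\int_0^t Z_s \, dL^0_s\right] \leq E^0_x\left[\sup_{0\leq s\leq t} Z_s \cdot L^0_t\right] \leq \left(E^0_x[\sup_{s\leq t} Z_s^{p_1}]\right)^{1/p_1} \left(E^0_x[(L^0_t)^{q_1}]\right)^{1/q_1}$$
for conjugate exponents $p_1, q_1$. Since $|b|^2,|q|\in L^p(D)$ with $p>d/2$, the heat-kernel bounds of \cite{YZ} imply that these functions lie in the Kato class for $X^0$, so Khasminskii's inequality bounds $\sup_{x\in\bar D}E^0_x[Z_s^{p_1}]$ uniformly on any bounded time interval; Doob's maximal inequality then handles the supremum. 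Lemma 3.1 gives $(E^0_x[(L^0_t)^{q_1}])^{1/q_1}\leq K_{q_1}^{1/q_1}t^{1/2}$. Multiplying yields $M_2(t)\leq C t^{1/2}\to 0$ as $t\to 0$.

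\textbf{Lower bound.} Fix a small $\eta>0$ and set $A_{\eta,t}=\{\sup_{0\leq s\leq t}|\log Z_s|\leq\eta\}$. Using the Kato class property of $|b|^2$ and $|q|$, one has $\sup_x E^0_x[\int_0^t(|b|^2+|q|)(X^0_u)\,du]\to 0$ as $t\to 0$; combined with Doob's inequality applied to the continuous local martingale $\int_0^\cdot\langle A^{-1}b,dM^0\rangle$ whose quadratic variation is $\int_0^\cdot bA^{-1}b^*(X^0_s)\,ds$, this gives $\varepsilon(t):=\sup_x P^0_x(A_{\eta,t}^c)\to 0$ as $t\to 0$. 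On $A_{\eta,t}$ we have $Z_s\geq e^{-\eta}$, so
$$E^0_x\left[\int_0^t Z_s\,dL^0_s\right]\geq e^{-\eta}\left(E^0_x[L^0_t]-E^0_x[L^0_t\mathbf{1}_{A_{\eta,t}^c}]\right),$$
and Cauchy-Schwarz together with Lemma 3.1 gives $E^0_x[L^0_t\mathbf{1}_{A_{\eta,t}^c}]\leq K_2^{1/2}t^{1/2}\varepsilon(t)^{1/2}$. Provided the heat-kernel argument behind Lemma 3.1 supplies a matching quantitative lower bound $\inf_x E^0_x[L^0_t]\geq c\sqrt{t}$ for small $t$, choosing $t$ small enough forces $E^0_x[L^0_t\mathbf{1}_{A_{\eta,t}^c}]\leq \tfrac12 E^0_x[L^0_t]$, and we obtain $M_1(t)\geq \tfrac{e^{-\eta}}{2}\inf_x E^0_x[L^0_t]>0$ for all small $t$. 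The extension to arbitrary $t>0$ is automatic: since $Z\geq 0$ and $L^0$ is increasing, $t\mapsto\int_0^t Z_s\,dL^0_s$ is non-decreasing in $t$, so the lower bound at some $t_0$ serves as a lower bound for all $t\geq t_0$.

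\textbf{Expected obstacle.} The delicate step is the uniformity in $x\in\bar D$ of both the Khasminskii bound on $E^0_x[\sup_s Z_s^{p_1}]$ and the vanishing of $P^0_x(A_{\eta,t}^c)$ as $t\to 0$. This ultimately rests on the Gaussian upper bounds for the transition density of $X^0$ in \cite{YZ}, which translate $L^p$-integrability with $p>d/2$ into a uniform Kato-norm estimate. A secondary concern is whether Lemma 3.1 actually yields the quantitative lower bound $\inf_x E^0_x[L^0_t]\geq c\sqrt{t}$; if not, one may instead fix a single $t_0>0$ with $\varepsilon(t_0)$ small enough to keep $E^0_x[L^0_{t_0}\mathbf{1}_{A_{\eta,t_0}^c}]<\tfrac12\inf_x E^0_x[L^0_{t_0}]$ (using only the qualitative positivity), then propagate to all $t\geq t_0$ by monotonicity and to $t<t_0$ by a direct argument at that scale.
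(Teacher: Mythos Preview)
Your upper-bound argument is essentially the paper's: both pull out $\sup_{s\le t}Z_s$ via H\"older, control the exponential-martingale and Feynman--Kac factors by Khasminskii-type estimates, and use the moment bound $E^0_x[(L^0_t)^n]\le K_n t^{n/2}$ from Lemma~3.1 to extract the $\sqrt{t}$ decay of $M_2(t)$.

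The lower bound is where you and the paper diverge. The paper uses a Cauchy--Schwarz duality: writing $L^0_t=\int_0^t Z_s^{1/2}Z_s^{-1/2}\,dL^0_s$ and applying Cauchy--Schwarz with respect to the measure $P^0_x\otimes dL^0_s$ yields
\[
\bigl(E^0_x[L^0_t]\bigr)^2 \;\le\; E^0_x\!\left[\int_0^t Z_s\,dL^0_s\right]\cdot E^0_x\!\left[\int_0^t Z_s^{-1}\,dL^0_s\right],
\]
so $E^0_x[\int_0^t Z_s\,dL^0_s]\ge (E^0_x[L^0_t])^2\big/E^0_x[\int_0^t Z_s^{-1}\,dL^0_s]$. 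Since $Z_s^{-1}$ has the same exponential-martingale-times-Feynman--Kac structure as $Z_s$, the \emph{upper}-bound argument applied to $Z^{-1}$ controls the denominator, and the purely qualitative positivity $\inf_x E^0_x[L^0_t]>0$ from Lemma~3.1 finishes. No quantitative lower rate on $E^0_x[L^0_t]$ is needed.

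Your localization route, by contrast, hinges on comparing $K_2^{1/2}t^{1/2}\varepsilon(t)^{1/2}$ against $\inf_x E^0_x[L^0_t]$, and Lemma~3.1 as stated gives only positivity, not a rate. Your fallback (``fix a single $t_0$, then handle $t<t_0$ by a direct argument at that scale'') does not close the gap: choosing a workable $t_0$ already requires that comparison to succeed at $t_0$, which is the same unresolved issue. The repair is to vary $\eta$ rather than $t$: for each fixed $t>0$, the uniform moment bounds on $\sup_{s\le t}Z_s^{\pm 1}$ give $\sup_x P^0_x(A_{\eta,t}^c)\to 0$ as $\eta\to\infty$, so a large enough $\eta=\eta(t)$ forces $E^0_x[L^0_t\mathbf{1}_{A_{\eta,t}^c}]<\frac{1}{2}\inf_x E^0_x[L^0_t]$ and yields $M_1(t)\ge\frac{1}{2}e^{-\eta(t)}\inf_x E^0_x[L^0_t]>0$. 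That is a valid alternative, but the paper's duality trick is shorter and avoids the whole detour.
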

\begin{proof}\\
$1^{\circ}$: Put
\begin{eqnarray}
\label{definition of tilde M}
&&\tilde{M}(t)=e^{\int^{t}_{0}<A^{-1}b(X^{0}_{s}),dM^{0}_{s}>-\frac{1}{2}\int_{0}^{t}bA^{-1}b^{*}(X^{0}_{s})ds},\\
&&e_{q}(t)=e^{\int_{0}^{t}q(X^{0}_{s})ds},\nonumber\\
&&M_{q}(t)=\sup_{x\in \overline{D}}E^{0}_{x}[\int_{0}^{t}|q(X^{0}_{s})|ds].\nonumber
\end{eqnarray}
Then we have
\begin{eqnarray}
\sup_{x\in \overline{D}}E^{0}_{x}[\int_{0}^{t}Z_{s}dL^{0}_{s}]&=&\sup_{x\in \overline{D}}E^{0}_{x}[\int_{0}^{t}\tilde{M}(s)e_{q}(s)dL^{0}_{s}]\nonumber\\
&\leq& \sup_{x\in \overline{D}}E^{0}_{x}[\max_{0\leq s\leq t}|\tilde{M}(s)|^{2}]^{\frac{1}{2}}\cdot \sup_{x\in \overline{D}}E^{0}_{x}[e_{2|q|}(t)(L^{0}_{t})^{2}]^{\frac{1}{2}}\nonumber\\
&\leq&\underbrace{\sup_{x\in \overline{D}}E^{0}_{x}[|\tilde{M}(t)|^{2}]^{\frac{1}{2}}}_{(I)}\cdot \underbrace{\sup_{x\in \overline{D}}E^{0}_{x}[e_{4|q|}(t)]^{\frac{1}{4}}}_{(II)}\cdot \underbrace{\sup_{x\in \overline{D}}E^{0}_{x}[(L^{0}_{t})^{4}]^{\frac{1}{4}}}_{(III)}
\end{eqnarray}
By Khash'Minskii's lemma and Theorem 2.1 in $\cite{LLZ}$, $(I)$ and $(II)$ are bounded if t belongs to a bounded interval.
Because of $E^{0}_{x}[(L^{0}_{t})^{n}]\leq K_{n} t^{\frac{n}{2}}$, we see that $M_{2}(t):=K(I)(II)\sqrt{t}$ is the required upper bound.\\
$2^{\circ}$: Since
\begin{eqnarray}
E^{0}_{x}[L^{0}_{t}]^{2}\leq E^{0}_{x}[\int^{t}_{0}\tilde{M}^{-1}(s)e_{-q}(s)dL^{0}_{s}]\cdot E^{0}_{x}[\int^{t}_{0}\tilde{M}(s)e_{q}(s)dL^{0}_{s}],
\end{eqnarray}
we obtain
\begin{eqnarray}
E^{0}_{x}[\int^{t}_{0}\tilde{M}(s)e_{q}(s)dL^{0}_{s}]\geq \frac{E^{0}_{x}[L^{0}_{t}]^{2}}{E^{0}_{x}[\int^{t}_{0}\tilde{M}^{-1}(s)e_{-q}(s)dL^{0}_{s}]}.
\end{eqnarray}
Here
\begin{eqnarray}
\tilde{M}^{-1}(t)&=&e^{-\int_{0}^{t}<A^{-1}b(X^{0}_{s}),dM^{0}_{s}>
+\frac{1}{2}\int_{0}^{t}bA^{-1}b^{*}(X^{0}_{s})ds}\nonumber\\
&=& e^{-\int_{0}^{t}<A^{-1}b(X_{s}),dM^{0}_{s}>-\frac{1}{2}\int_{0}^{t}bA^{-1}b^{*}(X^{0}_{s})ds}\cdot e^{\int_{0}^{t}bA^{-1}b^{*}(X^{0}_{s})ds}\nonumber\\
&:=& N(t)\cdot e^{\int_{0}^{t}bA^{-1}b^{*}(X^{0}_{s})ds}
\end{eqnarray}
By the proof of the first part, replacing $\tilde{M}_{t}$, $q$ by $N_{t}$ and $bA^{-1}b^{*}-q$ respectively, it is seen that there exists $K(t)>0$ such that $\sup_{x\in \overline{D}}E^{0}_{x}[\int^{t}_{0}\tilde{M}^{-1}(s)e_{-q}(s)dL^{0}_{s}]\leq K(t)$.\\
As $\inf_{x\in \overline{D}}E^{0}_{x}[L^{0}_{t}]>0$,  we complete the proof of the lemma by setting $M_{1}(t)=\frac{\inf_{x\in \overline{D}}E^{0}_{x}[L^{0}_{t}]^{2}}{K(t)}$.$\square$
\end{proof}
\\\\
Set $G(x):= E^{0}_{x}[\int_{0}^{\infty}Z_{s}dL^{0}_{s}]$.
\begin{lemma}
\label{upper boundary of semigroup}
If there is a point $x_{0}\in \overline{D}$, such that $G(x_{0})<\infty$, then there are two positive constants K and $\beta$ such that $\sup_{x\in \overline{D}}E^{0}_{x}[Z_{t}]\leq Ke^{-\beta t}$.
\end{lemma}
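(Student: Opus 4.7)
The plan is to combine a renewal-type estimate at integer times with a submultiplicative property of $t \mapsto \sup_x E^0_x[Z_t]$. First, using $Z_{t+s} = Z_t \cdot (Z_s \circ \theta_t)$, the local-time additivity $L^0_{t+s} - L^0_t = L^0_s \circ \theta_t$, and the Markov property of $X^0$, I would derive the renewal identity
$$
G(x_0) = \sum_{k=0}^{n-1} E^0_{x_0}\left[Z_k \cdot E^0_{X^0_k}\left[\int_0^1 Z_s \, dL^0_s\right]\right] + E^0_{x_0}\left[Z_n G(X^0_n)\right], \quad n \ge 1.
$$
The uniform lower bound $E^0_y[\int_0^1 Z_s \, dL^0_s] \ge M_1(1) > 0$ of the previous lemma then yields $\sum_{k=0}^\infty E^0_{x_0}[Z_k] \le G(x_0)/M_1(1) < \infty$, so in particular $E^0_{x_0}[Z_n] \to 0$. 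Next, setting $e(t) := \sup_{x \in \overline{D}} E^0_x[Z_t]$, I would observe that the Markov property gives $E^0_x[Z_{t+s}] = E^0_x[Z_t E^0_{X^0_t}[Z_s]] \le e(s) E^0_x[Z_t]$, hence the submultiplicativity $e(t+s) \le e(t)\, e(s)$, with $e$ bounded on each compact time interval by Khasminskii's inequality.

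The main step is to promote the pointwise decay $E^0_{x_0}[Z_n] \to 0$ to the uniform decay $e(t) \to 0$. I would use the two-sided heat-kernel bounds for $X^0$ (from \cite{YZ}), combined with moment estimates on $Z_t$, to establish constants $c, C > 0$ such that the Feynman--Kac kernel $k_t(x, y)$ of the semigroup $S_t f(x) := E^0_x[Z_t f(X^0_t)]$ satisfies $k_2(x, y) \le C$ and $k_1(x_0, y) \ge c$ uniformly on $\overline{D}$. Applying these to $f := S_t 1 \ge 0$, one gets
$$
e(t+2) = \sup_x \int_{\overline{D}} k_2(x, y) (S_t 1)(y) \, dy \le C \int_{\overline{D}} (S_t 1)(y) \, dy \le \frac{C}{c} \int_{\overline{D}} k_1(x_0, y) (S_t 1)(y) \, dy = \frac{C}{c} E^0_{x_0}[Z_{t+1}] \longrightarrow 0,
$$
so there exists $t_0$ with $\rho := e(t_0) < 1$. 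Submultiplicativity then finishes the job: for $t = n t_0 + r$ with $0 \le r < t_0$, $e(t) \le \rho^n \cdot \sup_{[0, t_0]} e \le K e^{-\beta t}$ with $\beta := -t_0^{-1} \log \rho > 0$.

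The hard part will be the transfer step above: establishing the two-sided bounds on the Feynman--Kac kernel $k_t$ under the singular hypotheses $I_D(|b|^2 + |q|) \in L^p(D)$ with $p > d/2$. Everything else follows directly from the multiplicative structure of $Z$ and the Markov property of $X^0$.
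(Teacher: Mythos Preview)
Your proposal is correct and follows essentially the same route as the paper: both arguments (i) use the renewal decomposition $G(x_0)\ge M_1(1)\sum_{k\ge 0}E^0_{x_0}[Z_k]$ to force $E^0_{x_0}[Z_N]$ small for some $N$, (ii) invoke two-sided bounds on the Feynman--Kac kernel at a fixed time (the paper cites \cite{YZ} directly for $c^{-1}\le p_2(1,x,y)\le c$, rather than deriving it from the $X^0$-kernel plus moment bounds on $Z_t$ as you propose) to transfer this to $\sup_x E^0_x[Z_N]\le\frac12$, and (iii) finish via the submultiplicativity $e(t+s)\le e(t)e(s)$. The only practical difference is that the paper uses the uniform two-sided bound at time $1$ in both directions, whereas you split into an upper bound at time $2$ and a lower bound at time $1$ from the single point $x_0$; either variant works, and citing \cite{YZ} directly for the $L_2$-kernel bounds (as the paper does) will spare you the ``hard part'' you flagged.
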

\begin{proof}\\
 By Girsanov Theorem and Feymann-Kac formula, $L_{2}= \frac{1}{2}\nabla\cdot(A\nabla) +b\cdot\nabla+q$ is associated with the semigroup $\{T_{t}\}_{t>0}$, where $T_{t}f(x)=E^{0}_{x}[Z_{t}f(X^{0}_{t})]$ for $f\in L^{2}(D)$.\\
By the upper and lower bound estimates of the heat kernel $p_{2}(t,x,y)$ associated with $T_{t}$ in $\cite{YZ}$,
the following inequality holds,
\begin{eqnarray}
c^{-1}\int_{D}f(x)dx\leq E^{0}_{x}[Z_{1}f(X^{0}_{1})]\leq c\int_{D}f(x)dx,
\end{eqnarray}
where $c$ is a positive constant.
Since
\begin{eqnarray}
G(x)=\sum_{n=0}^{\infty}E^{0}_{x}[Z_{n}E^{0}_{X^{0}_{n}}[\int_{0}^{1}Z_{s}L^{0}(ds)]]\geq M_{1}(1)\sum_{n=0}^{\infty}E^{0}_{x}[Z_{n}]\nonumber
\end{eqnarray}
and $G(x_{0})<\infty$, there is a positive integer number $N$ such that
$$
\frac{1}{2c^{2}}\geq E^{0}_{x_{0}}[Z_{N}]= E^{0}_{x_{0}}[Z_{1}E^{0}_{X_{1}}[Z_{N-1}]]
\geq c^{-1}\int_{D}E^{0}_{x}[Z_{N-1}]m(dx).
$$
This implies
$$
\int_{D}E^{0}_{x}[Z_{N-1}]m(dx)\leq \frac{1}{2c}.
$$
Thus
\begin{eqnarray}
\label{upper boundes of Zn}
\sup_{x\in \overline{D}}E^{0}_{x}[Z_{N}]=\sup_{x\in \overline{D}}E^{0}_{x}[Z_{1}E^{0}_{X_{1}}[Z_{N-1}]]\leq c\int_{D}E^{0}_{x}[Z_{N-1}]m(dx)\leq \frac{1}{2}.
\end{eqnarray}
For any $t>0$, there exists a positive number $n$ such that $\frac{t}{N}\in[n-1,n)$. Then by $(\ref{upper boundes of Zn})$, it follows that
\begin{eqnarray}
E^{0}_{x}[Z_{t}]\leq \frac{1}{2^{n-1}}E^{0}_{x}[Z_{t-N(n-1)}]&\leq& \left(\sup_{x\in D,0\leq t\leq N}E^{0}_{x}[Z_{t}]\right)\frac{1}{2^{n-1}}\nonumber\\
&\leq& 2\sup_{x\in D,0\leq t\leq N}E^{0}_{x}[Z_{t}]e^{-\frac{ln2}{N}t}.\square
\end{eqnarray}
\end{proof}

\begin{theorem}
If there exists $x_{0}\in \overline{D}$ such that $G(x_{0})<\infty$, then there exists a unique bounded continuous weak solution of the problem $(\ref{simple})$:
\end{theorem}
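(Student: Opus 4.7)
The plan is to produce the solution by a Feynman--Kac/Girsanov representation driven by the reflecting diffusion $X^{0}$ and then identify it as a weak solution in the sense of Definition 3.1.

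\medskip
\noindent\textbf{Step 1 (candidate and boundedness).} I would define
$$u(x):=-E^{0}_{x}\!\Bigl[\int_{0}^{\infty}Z_{s}F(X^{0}_{s})\,ds\Bigr]-2E^{0}_{x}\!\Bigl[\int_{0}^{\infty}Z_{s}\phi(X^{0}_{s})\,dL^{0}_{s}\Bigr].$$
By Lemma \ref{upper boundary of semigroup}, $\sup_{x\in\bar D}E^{0}_{x}[Z_{t}]\le Ke^{-\beta t}$, which bounds the first term by $\|F\|_{\infty}K/\beta$. The hypothesis $G(x_{0})<\infty$ propagates to $\sup_{x\in\bar D}G(x)<\infty$ via the Markov property together with Lemmas 3.1--3.2 (use $G(x)=E^{0}_{x}[\int_{0}^{1}Z_{s}dL^{0}_{s}]+E^{0}_{x}[Z_{1}G(X^{0}_{1})]$ and the heat kernel bounds of \cite{YZ}), so the second term is bounded by $2\|\phi\|_{\infty}\sup_{x}G(x)$. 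Hence $u$ is bounded on $\bar D$. Continuity of $u$ follows from joint continuity of the kernel $p_{2}(t,x,y)$ of the semigroup $T_{t}f(x)=E^{0}_{x}[Z_{t}f(X^{0}_{t})]$ (again from \cite{YZ}), combined with dominated convergence under the exponential tail just established.

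\medskip
\noindent\textbf{Step 2 (weak-solution identity and Sobolev regularity).} The guiding heuristic is Ito's product rule for $Z_{t}u(X^{0}_{t})$: if $u$ were smooth, then using $dZ_{t}=Z_{t}q(X^{0}_{t})\,dt+Z_{t}\langle A^{-1}b(X^{0}_{t}),dM^{0}_{t}\rangle$ and the cross-variation $d\langle Z,u(X^{0})\rangle_{t}=Z_{t}\,b\cdot\nabla u(X^{0}_{t})\,dt$ gives
$$d\bigl(Z_{t}u(X^{0}_{t})\bigr)=Z_{t}L_{2}u(X^{0}_{t})\,dt+Z_{t}\frac{\partial u}{\partial\gamma}(X^{0}_{t})\,dL^{0}_{t}+dM_{t},$$
which upon integration, expectation and $T\to\infty$ reproduces the formula in Step 1 whenever $L_{2}u=F$ and $\tfrac12\partial u/\partial\gamma=\phi$. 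To convert this into a variational statement, I would invoke Fukushima's decomposition for the regular Dirichlet form $\mathcal{E}^{0}$ on $W^{1,2}(D)$. Since $u$ is a bounded potential of an additive functional of finite energy, it lies in $W^{1,2}(D)$; testing the Fukushima decomposition of $u(X^{0})$ against $g\in C^{\infty}(\bar D)$ and applying the Revuz correspondence (Lebesgue measure on $D$ is the Revuz measure of $\int_{0}^{\cdot}ds$, and surface measure $\sigma$ on $\partial D$ is the Revuz measure of $\int_{0}^{\cdot}dL^{0}_{s}$) yields
$$\mathcal{E}(u,g)=\int_{\partial D}\phi(x)g(x)\,\sigma(dx)-\int_{D}F(x)g(x)\,dx,$$
which is the weak formulation required by Definition 3.1.

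\medskip
\noindent\textbf{Step 3 (uniqueness).} If $u_{1},u_{2}$ are two bounded continuous weak solutions, then $w=u_{1}-u_{2}$ is a bounded weak solution with $F\equiv0$ and $\phi\equiv0$. Applying the Ito--Fukushima identity of Step 2 to $Z_{t}w(X^{0}_{t})$ gives $w(x)=E^{0}_{x}[Z_{T}w(X^{0}_{T})]$ for every $T>0$, and the bound $E^{0}_{x}[Z_{T}]\le Ke^{-\beta T}$ of Lemma \ref{upper boundary of semigroup} together with $\|w\|_{\infty}<\infty$ sends the right-hand side to $0$ as $T\to\infty$, forcing $w\equiv 0$.

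\medskip
\noindent The main obstacle is Step 2: because $b,q\in L^{p}(D)$ with $p>d/2$ are only integrable, the classical Ito formula is not available for $u(X^{0})$, and the expansion of $Z_{t}u(X^{0}_{t})$ must be carried out within Fukushima's framework. In particular, the cross-variation term $Z_{t}b\cdot\nabla u(X^{0}_{t})\,dt$ has to be identified (via approximation by smooth $u_{n}$ and the Revuz correspondence) with the distributional $b\cdot\nabla u$ contribution appearing in $\mathcal{E}$. Once this identification is made, all remaining claims are direct corollaries of the representation in Step 1 and the estimates of Lemmas 3.1--3.3.
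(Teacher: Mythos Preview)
Your approach differs substantially from the paper's, and Step 2 contains a real gap.

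The paper does not treat the two source terms simultaneously. It splits the problem by linearity: the boundary piece $L_{2}u_{2}=0$, $\tfrac12\partial u_{2}/\partial\gamma=\phi$ is handled entirely by citing Theorem 3.2 of \cite{CZ2}, which already proves existence, uniqueness, boundedness, continuity and $W^{1,2}$-membership for that part. For the remaining interior piece $L_{2}u_{1}=F$ with zero Neumann data, the paper sets $u_{1}(x)=\int_{0}^{\infty}T_{t}F(x)\,dt$ and shows $u_{1}\in D(\mathcal{E})$ by a resolvent computation: from $\beta G_{\beta}u_{1}=u_{1}-G_{\beta}F$ one gets
\[
\lim_{\beta\to\infty}\beta\bigl(u_{1}-\beta G_{\beta}u_{1},\,u_{1}\bigr)_{L^{2}}=\lim_{\beta\to\infty}(\beta G_{\beta}F,u_{1})_{L^{2}}=(F,u_{1})_{L^{2}}<\infty,
\]
and the standard criterion then places $u_{1}$ in the domain of the form. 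No Fukushima decomposition or It\^{o} expansion of $Z_{t}u(X^{0}_{t})$ is used at any point.

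Your Step 2 asserts that ``$u$ is a bounded potential of an additive functional of finite energy, hence $u\in W^{1,2}(D)$.'' This is the gap. The representation you wrote is \emph{not} a $0$-potential for the symmetric Dirichlet form $\mathcal{E}^{0}$: the weight $Z_{s}$ is a multiplicative (Girsanov $\times$ Feynman--Kac) functional, so the integrals $E^{0}_{x}[\int_{0}^{\infty}Z_{s}\,dA_{s}]$ are potentials for the \emph{perturbed, non-symmetric} semigroup $T_{t}$, and the finite-energy criterion from symmetric Dirichlet form theory does not apply to them directly. Consequently you cannot invoke Fukushima's decomposition for $u(X^{0})$ before you have shown $u\in W^{1,2}(D)$ by some other means, and your argument becomes circular. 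The ``approximation by smooth $u_{n}$'' you mention at the end does not resolve this: you would have to control $\nabla u_{n}$ in $L^{2}$ uniformly, which is exactly the missing $W^{1,2}$ estimate. For the boundary-local-time term in particular, establishing $W^{1,2}$-regularity of $E^{0}_{x}[\int_{0}^{\infty}Z_{s}\phi(X^{0}_{s})\,dL^{0}_{s}]$ is the non-trivial content of \cite{CZ2}, which the paper simply cites.

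Your uniqueness argument (Step 3) suffers from the same issue: deriving $w(x)=E^{0}_{x}[Z_{T}w(X^{0}_{T})]$ requires the It\^{o}--Fukushima identity for the product $Z_{t}w(X^{0}_{t})$, which again presupposes $w\in W^{1,2}$ and a justified expansion with the singular drift $b$ and potential $q$. The paper avoids this entirely by observing that the difference of two solutions solves the homogeneous problem with $\phi=0$ and then invoking the uniqueness already established in \cite{CZ2}.

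In short: your candidate formula and the estimates in Step 1 are fine, but the route to $W^{1,2}$-membership and the weak formulation must go either through the resolvent identity (as the paper does for the $F$-part) or through the machinery of \cite{CZ2} (for the $\phi$-part); the ``finite-energy potential'' shortcut does not apply in the presence of the multiplicative weight $Z_{s}$.
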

\begin{proof}\\
$Existence:$\\
Due to Theorem 3.2 in $\cite{CZ2}$, there exists a unique, bounded, continuous weak solution $u_{2}$ of the following problem:
\begin{eqnarray}
\label{linear equation with F=0}
\left\{\begin{array}{ll}
 L_{2}u_{2}(x)=0,&\textrm{on $D$}\\
\frac{1}{2}\frac{\partial u_{2}}{\partial{\gamma}}(x)=\phi &\textrm{on $\partial D$ }.
\end{array}\right.
\end{eqnarray}
Thus by the linearity of the problem $(\ref{simple})$,  we only need to show that the following problem has a bounded continuous weak solution:
\begin{eqnarray}
\label{linear equation 2}
\left\{\begin{array}{ll}
 L_{2}u_{1}(x)=F(x),&\textrm{on $D$}\\
\frac{\partial u_{1}}{\partial{\gamma}}(x)=0 &\textrm{on $\partial D$ }
\end{array}\right.
\end{eqnarray}
The semigroup associated with operator $L_{2}$ is $\{T_{t},t>0\}$. By Lemma $\ref{upper boundary of semigroup}$, we have
$$
\sup_{x\in D}|T_{t}F(x)|=\sup_{x\in D}|E_{x}^{0}[Z_{t}F(X_{t}^{0})]|\leq Ke^{-\beta t}\|F\|_{\infty}.
$$
Then
$$
u_{1}(x):=\int_{0}^{\infty}T_{t}F(x)dt
$$
is well defined and has the following bound:
$$
\sup_{x\in D}|u_{1}(x)|\leq \frac{K}{\beta}\|F\|_{\infty}.
$$
The function $u_{1}(x)$ is also continuous on D. \\
In fact, fixing any $x\in D$ and $\epsilon>0$, we can firstly choose a constant $t_{0}>0$, such that $\sup_{z\in D}|\int_{0}^{t_{0}}T_{s}F(z)ds|<\frac{\epsilon}{3}$. And because $T_{t_{0}}u_{1}(x)$ is continuous, there exists a constant $\delta>0$, such that for any $y$ with $|y-x|<\delta$, $|T_{t_{0}}u_{1}(x)-T_{t_{0}}u_{1}(y)|\leq \frac{\epsilon}{3}$.\\
We find that
\begin{eqnarray}
T_{t}u_{1}(x)=E_{x}^{0}[Z_{t}u_{1}(X_{t}^{0})]&=&E_{x}^{0}[Z_{t}\int_{0}^{\infty}E_{X^{0}_{t}}[Z_{s}F(X^{0}_{s})]ds]\nonumber\\
&=&\int_{0}^{t}E_{x}^{0}[Z_{t+s}u_{1}(X_{t+s}^{0})]ds\nonumber\\
&=&\int_{t}^{\infty}T_{s}F(x)ds\nonumber\\
&=&u_{1}(x)-\int_{0}^{t}T_{s}F(x)ds.
\end{eqnarray}
For any $y$ satisfying $|y-x|<\delta$, it follows that
\begin{eqnarray}
|u_{1}(x)-u_{1}(y)|\leq |T_{t_{0}}u_{1}(x)-T_{t_{0}}u_{1}(y)|+|\int_{0}^{t_{0}}T_{s}F(x)ds|+|\int_{0}^{t_{0}}T_{s}F(y)ds|\leq \epsilon.
\end{eqnarray}
This implies that the function $u_{1}$ is continuous on domain $D$.\\
Denote the resolvents associated with operator $L_{2}$ by $\{G_{\beta},\beta>0\}$.
Note that
\begin{eqnarray}
G_{\beta}u_{1}(x)&=&\int_{0}^{\infty}e^{-\beta t}T_{t}u_{1}(x)dt\nonumber\\
&=&\int_{0}^{\infty}e^{-\beta t}u_{1}(x)dt-\int_{0}^{\infty}e^{-\beta t}\int_{0}^{t}T_{s}F(x)dsdt\nonumber\\
&=&\frac{1}{\beta}u_{1}(x)-\int_{0}^{\infty}\int_{0}^{t}e^{-\beta t}T_{s}F(x)dsdt\nonumber\\
&=&\frac{1}{\beta}u_{1}(x)-\int_{0}^{\infty}T_{s}F(x)(\int_{s}^{\infty}e^{-\beta t}dt)ds\nonumber\\
&=&\frac{1}{\beta}u_{1}(x)-\frac{1}{\beta}G_{\beta}F(x).
\end{eqnarray}
We have
$$
\beta(u_{1}(x)-\beta G_{\beta}u_{1}(x))=\beta G_{\beta}F(x).
$$
Therefore,
$$
\lim_{\beta\rightarrow\infty}\int_{D}\beta(u_{1}(x)-\beta G_{\beta}u_{1}(x))u_{1}(x)dx=\lim_{\beta\rightarrow\infty}\int_{D}\beta G_{\beta}F(x)u(x)dx=\int_{D}F(x)u_{1}(x)dx<\infty.
$$
This implies that $u_{1}\in D(\mathcal{E})$ (see $\cite{MR}$) and $u_{1}$ is a weak solution of equation $(\ref{linear equation 2})$.
By the linearity,  $u=u_{1}+u_{2}$ is a bounded continuous weak solution of equation
$(\ref{simple})$.\\
$Uniqueness:$\\
Let $v_{1}$ and $v_{2}$ be two bounded continuous weak solutions of the equation $(\ref{simple})$. Then $v_{1}-v_{2}$ is the solution of equation $(\ref{linear equation with F=0})$ with $\phi=0$. Then by the uniqueness of the equation $(\ref{linear equation with F=0})$ proved in $\cite{CZ2}$, we know that $v_{1}=v_{2}$. $\square$
\end{proof}

\section{Semilinear PDEs}
\setcounter{equation}{0}
Recall that
\begin{eqnarray}
L_{1}=\frac{1}{2}\sum_{i,j=1}^d \frac{\partial}{\partial x_i}\left(a_{ij}(x)\frac{\partial}{\partial x_j}\right)+\sum_{i=1}^d  b_i(x)\frac{\partial}{\partial x_i}\nonumber
\end{eqnarray}
and $L_{2}=L_{1}+q$ are two operators both defined on the domain D and equipped with the Neumann boundary condition $\frac{\partial}{\partial \gamma}=0$ on $\partial  D$.\\
$(\Omega, \mathfrak{F}_{t}, X(t), P_{x}, x\in D)$ is the reflecting diffusion process associated with the operator $L_{1}$ with the decomposition introduced in $(\ref{deomposition of X})$.\\\\
In this section, we solve the following semilinear boundary value problem:
\begin{eqnarray}
\label{semilinear L2}
\left\{\begin{array}{ll}
{L_{2}}u(x)=-G(x,u(x),\nabla u(x)),&\textrm{on $D$}\\
\frac{1}{2}\frac{\partial u}{\partial{\gamma}}(x)=\phi(x) &\textrm{on $\partial D$ }
\end{array}\right.
\end{eqnarray}
Let $\mathcal{E}(\cdot,\cdot)$ be the quadratic form associated with the operator $L_{2}$:
$$
\mathcal{E}(u,v)=\frac{1}{2}\int_{D}<A\nabla u,\nabla v >dx-\int_{D}<b,\nabla u>vdx-\int_{D}quvdx.
$$
\begin{definition} A bounded continuous function $u(x)$ defined on D is called a weak solution of the equation
$(\ref{semilinear L2})$
if $u\in W^{1,2}(D)$, and for any $g\in C^{\infty}(\overline{D})$,
\begin{eqnarray}
\mathcal{E}(u,g)=\int_{\partial D}\phi(x)g(x)\sigma(dx)+\int_{D}G(x,u(x),\nabla u(x))g(x)dx\nonumber.
\end{eqnarray}
\end{definition}
Recall that $L_{t}$ is the boundary local time of $X(t)$ defined in $(\ref{deomposition of X})$ and $L^{0}_{t}$ is the boundary local time of $X^{0}_{t}$ in $(\ref{decomposition of X0})$.\\
As a consequence of the Girsanov theorem, we  have:
\begin{lemma}
\label{relationship between the integrale w.r.t. local times}
Suppose that the function $f$ satisfies $E_{x}[\int_{0}^{T}e^{\int_{0}^{t}f(X(u))du}dL_{t}]<\infty$. Then it holds that
 \begin{eqnarray}
E_{x}[\int_{0}^{T}e^{\int_{0}^{t}f(X(u))du}dL_{t}]
=E^{0}_{x}[\int_{0}^{T}\tilde{M}_{t}e^{\int_{0}^{t}f(X^{0}_{u})du}dL^{0}_{t}],\nonumber
\end{eqnarray}
where $\tilde{M}_{t}$ was defined in $(\ref{definition of tilde M})$.
\end{lemma}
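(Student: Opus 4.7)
\proof
The plan is to combine the Girsanov transform (relating $X$ under $P_x$ to $X^0$ under $P^0_x$) with the martingale property of $\tilde{M}_t$ in order to move the density under the local-time integral.

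First, I would use the assumption $I_D|b|^2 \in L^p(D)$ with $p>d/2$ and Khas'minskii-type estimates (as invoked in the proof of Lemma 3.2, cf.~\cite{LLZ}) to conclude that $\tilde{M}_t$ is a true $P^0_x$-martingale on $[0,T]$. Define a probability measure $Q$ on $\mathcal{F}_T$ by $dQ/dP^0_x = \tilde{M}_T$. A direct computation of the bracket
\begin{equation*}
\Big\langle M^0, \int_0^\cdot (A^{-1}b)^*(X^0_s)\,dM^0_s\Big\rangle_t = \int_0^t b^*(X^0_s)\,ds
\end{equation*}
together with Girsanov's theorem shows that under $Q$ we have
\begin{equation*}
X^0(t) = X^0(0) + \widetilde{M}^Q(t) + \int_0^t \tilde{b}(X^0_s)\,ds + \int_0^t An(X^0_s)\,dL^0_s,
\end{equation*}
where $\widetilde{M}^Q$ is a continuous $Q$-martingale. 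This is exactly the decomposition $(\ref{deomposition of X})$ of $X$ under $P_x$, so by uniqueness in law of the reflecting diffusion associated with $L_1$, the joint law of $(X^0,L^0)$ under $Q$ restricted to $[0,T]$ coincides with that of $(X,L)$ under $P_x$. Since $\int_0^T e^{\int_0^t f(X(u))du}dL_t$ is a path functional on $[0,T]$, this immediately yields
\begin{equation*}
E_x\Big[\int_0^T e^{\int_0^t f(X(u))du}\,dL_t\Big] = E^0_x\Big[\tilde{M}_T \int_0^T e^{\int_0^t f(X^0_u)du}\,dL^0_t\Big].
\end{equation*}

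Second, I would move $\tilde{M}_T$ inside the $dL^0_t$ integral. Writing $H_t := \int_0^t e^{\int_0^s f(X^0_u)du}\,dL^0_s$, the integration-by-parts formula gives
\begin{equation*}
\tilde{M}_T H_T = \int_0^T \tilde{M}_t\,dH_t + \int_0^T H_t\,d\tilde{M}_t.
\end{equation*}
The second term is a local martingale; after a standard localization (stopping at $\tau_n := \inf\{t : H_t + \tilde{M}_t > n\} \wedge T$) and passage to the limit using the integrability hypothesis together with the bounds on $\tilde{M}$, its expectation vanishes. This gives the desired identity. Equivalently, I could argue by Riemann sums: for a partition $0=t_0<\cdots<t_n=T$, the tower property and the martingale property of $\tilde{M}$ yield $E^0_x[\tilde{M}_T G (L^0_{t_{i+1}}-L^0_{t_i})] = E^0_x[\tilde{M}_{t_{i+1}} G (L^0_{t_{i+1}}-L^0_{t_i})]$ for any $\mathcal{F}_{t_i}$-measurable bounded $G$; summing and taking the mesh to zero produces the same result.

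The main technical obstacle is purely one of integrability: establishing that $\tilde{M}$ is a genuine (not merely local) martingale on $[0,T]$ and that $\tilde{M}_T H_T \in L^1(P^0_x)$, so that the local-martingale term in the integration by parts can be discarded. Both points are handled by the Khas'minskii-type moment estimates for exponentials of additive functionals of the reflecting diffusion $X^0$ used in the proof of Lemma 3.2, combined with the hypothesis $E_x[\int_0^T e^{\int_0^t f(X(u))du}dL_t]<\infty$ which, via the identity of Step 1, transfers to integrability of $\tilde{M}_T H_T$ under $P^0_x$. $\square$
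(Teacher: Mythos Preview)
Your approach is correct and is precisely what the paper intends: the paper does not give a detailed proof of this lemma, stating only that it is ``a consequence of the Girsanov theorem,'' and later (Section~5, citing \cite{LZ}) records the absolute-continuity relation $dP_{x}|_{\mathcal{F}_{t}}=\tilde{M}_{t}\,dP^{0}_{x}|_{\mathcal{F}_{t}}$ that underlies your Step~1. Your two-step argument (Girsanov to obtain the density $\tilde{M}_T$ outside the $dL^0$-integral, then integration by parts or the martingale/tower property on Riemann sums to replace $\tilde{M}_T$ by $\tilde{M}_t$) is the standard way to make this precise, and the integrability needed to discard the local-martingale term follows exactly as you indicate from the hypothesis together with positivity of the integrand.
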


The following lemma is deduced from Theorem 3.2 in $\cite{CZ2}$.
\begin{lemma}
\label{boundedness of gauge function}
Suppose that the function $\tilde{q}\in L^{p}(D)$ and $p>\frac{d}{2}$. If there exists some point $x_{0}\in D$, such that
\begin{eqnarray}
\label{integrability w.r.t local time at x0}
E_{x_{0}}[\int_{0}^{\infty}e^{\int_{0}^{t}\tilde{q}(X(u))du}dL_{t}]<\infty,
\end{eqnarray}
then it holds that
\begin{eqnarray}
\sup_{x}E_{x}[\int_{0}^{\infty}e^{\int_{0}^{t}\tilde{q}(X(u))du}dL_{t}]<\infty.\nonumber
\end{eqnarray}
\\
\end{lemma}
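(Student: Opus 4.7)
\proof (Proposal.) The strategy is to reduce the assertion, via the Girsanov identity already proved in Lemma~\ref{relationship between the integrale w.r.t. local times}, to a statement about the symmetric reflecting diffusion $X^0$, and then exploit the exponential decay supplied by Lemma~\ref{upper boundary of semigroup} together with the unit-interval bound from Section~3.

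First I would apply Lemma~\ref{relationship between the integrale w.r.t. local times} with $f = \tilde q$ on a truncated horizon $[0,T]$ and pass to the limit $T \to \infty$ by monotone convergence (the integrands are nonnegative once we drop absolute values in the exponential, and $\tilde M_t$ is a nonnegative $P^0_x$-martingale), to obtain
\begin{equation*}
E_x\Bigl[\int_0^\infty e^{\int_0^t \tilde q(X(u))du}\, dL_t\Bigr] \;=\; E^0_x\Bigl[\int_0^\infty Z_t\, dL^0_t\Bigr],
\end{equation*}
where $Z_t := \tilde M_t\, e^{\int_0^t \tilde q(X^0(u))\,du}$ is precisely the Girsanov--Feynman-Kac weight from Section~3 with $q$ replaced by $\tilde q$. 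Under this identification, the hypothesis at $x_0$ reads $E^0_{x_0}[\int_0^\infty Z_t dL^0_t] < \infty$, which is exactly the condition $G(x_0) < \infty$ of Lemma~\ref{upper boundary of semigroup}. That lemma then delivers constants $K,\beta > 0$ with $\sup_{x\in\bar D} E^0_x[Z_t] \le K e^{-\beta t}$ for all $t\ge 0$.

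Next I would decompose the infinite time integral into unit intervals. Using the multiplicativity $Z_{n+s} = Z_n\cdot (Z_s\circ\theta_n)$ (which follows from $\tilde M_{n+s} = \tilde M_n (\tilde M_s\circ\theta_n)$ and the additivity $\int_0^{n+s}\tilde q(X^0_u)du = \int_0^n + \int_0^s\circ\theta_n$) together with $L^0_{n+s}=L^0_n+L^0_s\circ\theta_n$ and the Markov property of $X^0$, one gets
\begin{equation*}
E^0_x\Bigl[\int_n^{n+1} Z_s\, dL^0_s\Bigr] \;=\; E^0_x\Bigl[Z_n\, E^0_{X^0_n}\!\Bigl[\int_0^1 Z_s\, dL^0_s\Bigr]\Bigr].
\end{equation*}
The upper bound part of Lemma~3.2 gives $\sup_{y\in\bar D} E^0_y[\int_0^1 Z_s dL^0_s]\le M_2(1)$, so each summand is dominated by $M_2(1)\,E^0_x[Z_n]\le M_2(1)K\,e^{-\beta n}$. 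Summing the geometric series yields
\begin{equation*}
\sup_{x\in\bar D} E_x\Bigl[\int_0^\infty e^{\int_0^t \tilde q(X(u))du}\, dL_t\Bigr] \;\le\; \frac{M_2(1)\,K}{1-e^{-\beta}} \;<\; \infty,
\end{equation*}
which is the desired conclusion.

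The only genuinely delicate point is the passage to the infinite horizon in Step~1, but it is routine once one works with nonnegative integrands and a truncated Girsanov change of measure. Everything else is algebra: the multiplicative property of $Z$, the Markov property, and a geometric sum. The substantive analytic input — the exponential decay $\sup_x E^0_x[Z_t]\le K e^{-\beta t}$ — has already been established in Lemma~\ref{upper boundary of semigroup}, so no heat kernel machinery needs to be re-invoked here.
\eproof
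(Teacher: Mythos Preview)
Your argument is correct. The paper does not actually prove this lemma: it simply states that the result is ``deduced from Theorem 3.2 in \cite{CZ2}'' and gives no further details. What you have done instead is supply a fully self-contained proof using only machinery already developed in the paper --- Lemma~\ref{relationship between the integrale w.r.t. local times} to pass to the symmetric diffusion $X^0$, Lemma~3.2 for the uniform unit-interval bound $M_2(1)$, and Lemma~\ref{upper boundary of semigroup} for the exponential decay $\sup_x E^0_x[Z_t]\le Ke^{-\beta t}$ --- and then sum a geometric series after decomposing $[0,\infty)$ into unit blocks via the multiplicativity of $Z$ and the Markov property. This is in fact the same mechanism that drives the proof of Lemma~\ref{upper boundary of semigroup} itself (and, presumably, the cited result in \cite{CZ2}), so your approach is entirely in the spirit of the paper; it just makes explicit what the authors chose to outsource. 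The one point worth flagging is that Lemmas~3.2 and~\ref{upper boundary of semigroup} are stated for the particular $q$ of Section~3, but their proofs depend only on $q\in L^p(D)$ with $p>d/2$, so invoking them with $\tilde q$ in place of $q$ is legitimate, as you note.
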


Let $G(x,y,z):R^{d}\times R\times R^{d}\rightarrow R$ be a bounded Borel measurable function. Introduce
the following conditions:\\
\textbf{(H.1)} $(y_{1}-y_{2})(G(x,y_{1},z)-G(x,y_{2},z))\leq -h_{1}(x)|y_{1}-y_{2}|^{2}$,\\
\textbf{(H.2)} $|G(x,y,z_{1})-G(x,y,z_{2})|\leq h_{2}|z_{1}-z_{2}|$.\\
Set $h(t)=-h_{1}(X(t))+\delta h^{2}_{2}+q(X(t))$ and $\tilde{h}(t)=-h_{1}(X(t))+\delta h^{2}_{2}$ for some constant $\delta>\frac{1}{2\lambda}$.\\
\begin{theorem}
\label{semilinear equation L2}
Suppose that the conditions (H.1) and (H.2) are satisfied. Assume
\begin{eqnarray}
\label{condition1 in main theorem}
E_{x_{1}}[\int_{0}^{\infty}e^{2\int_{0}^{t}(q(X(u))+\tilde{h}(u))du}dt]<\infty,\quad for \quad some \quad x_{1}\in D,
\end{eqnarray}
and there exists some point $x_{0}\in D$, such that
\begin{eqnarray}
\label{condition2 in main theorem}
E_{x_{0}}[\int_{0}^{\infty}e^{\int_{0}^{t}q(X(u))du}dL_{t}]<\infty.
\end{eqnarray}
Then the semilinear Neumann boundary value problem $(\ref{semilinear L2})$ has a unique continuous weak solution.
\end{theorem}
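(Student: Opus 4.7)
The plan is to carry out the three-stage program outlined in the Introduction: build a candidate $u_{0}$ from an infinite-horizon BSDE of the type solved in Section 2, freeze the nonlinearity along $u_{0}$ and solve the resulting linear PDE via Theorem 3.1, and finally identify the two.

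First I will choose the BSDE whose solution represents a putative classical $u$ solving $(\ref{semilinear L2})$. Applying Fukushima's decomposition to $e^{\int_{0}^{t}q(X(u))du}u(X(t))$ and using $L_{2}u=-G$ together with $\tfrac{1}{2}\partial u/\partial\gamma=\phi$ suggests the weighted driver
$$\widetilde F(t,y,z)=e^{\int_{0}^{t}q(X(u))du}\,G\!\Bigl(X(t),\,e^{-\int_{0}^{t}q(X(u))du}y,\,e^{-\int_{0}^{t}q(X(u))du}z\Bigr),$$
boundary integrand $e^{\int_{0}^{t}q(X(u))du}\phi(X(t))$ (so $\tilde q=q$ and $\Phi=\phi$), and terminal requirement $\lim_{t\to\infty}e^{\int_{0}^{t}\tilde h(u)du}\widetilde Y(t)=0$ in $L^{2}(\Omega)$. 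A short computation shows that under (H.1), (H.2) and the boundedness of $G$ and $\phi$, the data $(\widetilde F,\Phi,\tilde q)$ satisfy (D.1), (D.2), (D.3)$'$ with $d_{1}=h_{1}$, $d_{2}=h_{2}$, $d=\tilde h$, and $K(t)=\|G\|_{\infty}e^{\int_{0}^{t}q(X(u))du}$. Hypotheses $(\ref{condition1 in main theorem})$ and $(\ref{condition2 in main theorem})$ then match precisely the two integrability requirements of Theorem 2.1, once Lemma 4.2 is invoked to upgrade $(\ref{condition2 in main theorem})$ to $\sup_{x}E_{x}[\int_{0}^{\infty}e^{\int_{0}^{t}q(X(u))du}dL_{t}]<\infty$. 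Applying Theorem 2.1 produces a unique $(\widetilde Y_{x},\widetilde Z_{x})$, and I set $u_{0}(x):=\widetilde Y_{x}(0)$; this is $\mathcal{F}_{0}$-measurable hence deterministic, and Corollary 2.1 gives $\sup_{x\in D}|u_{0}(x)|<\infty$.

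Next, by the Markov property of $X$ and the uniqueness in Theorem 2.1, I would prove that $\widetilde Y_{x}(t)=e^{\int_{0}^{t}q(X(u))du}u_{0}(X(t))$ $P_{x}$-almost surely; comparing martingale parts then yields a Borel map $v_{0}:D\to\mathbb{R}^{d}$ with $\widetilde Z_{x}(t)=e^{\int_{0}^{t}q(X(u))du}v_{0}(X(t))$. Define $F(x):=G(x,u_{0}(x),v_{0}(x))$, which is bounded and Borel. Hypothesis $(\ref{condition2 in main theorem})$ together with Lemma 4.1 provides the gauge assumption of Theorem 3.1, which then delivers a bounded continuous weak solution $u\in W^{1,2}(D)$ of the linear problem $L_{2}u=-F$, $\tfrac{1}{2}\partial u/\partial\gamma=\phi$. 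Running Fukushima's decomposition on $e^{\int_{0}^{t}q(X(u))du}u(X(t))$ shows that this process solves the very same BSDE as $\widetilde Y_{x}$, so the uniqueness part of Theorem 2.1 forces $u\equiv u_{0}$. Hence $u_{0}$ is itself a weak solution of $(\ref{semilinear L2})$. Uniqueness of weak solutions follows by the same identification argument applied to any two candidates.

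The hard part will be the identification step: I need to apply Fukushima's decomposition to the merely $W^{1,2}$-regular solution $u$ of the linear problem and match its martingale integrand with $\widetilde Z_{x}$ as a Borel function of $x$, not merely $P_{x}$-a.s.\ along one trajectory. This will require approximation of $u$ by smooth functions, control of the resulting errors in the Dirichlet-form norm using the bounds of Lemmas 3.1, 4.1 and 4.2, and uniform-in-$x$ BSDE estimates obtained from Corollary 2.1. A secondary issue is to verify that the explicit $t$-dependence of $\widetilde F$ through the exponential weights fits within the progressively-measurable framework underlying Theorem 2.1 (which traces back to the more general Lemma 2.1), but this is routine.
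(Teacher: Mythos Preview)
Your overall three--stage plan coincides with the paper's, and the first two stages (construction of $(\widetilde Y_x,\widetilde Z_x)$ via Theorem~2.1 and of the linear solution $u$ via Theorem~3.1) are set up correctly. The gap is in your identification step.

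You assert that ``running Fukushima's decomposition on $e^{\int_0^t q(X(u))du}u(X(t))$ shows that this process solves the very same BSDE as $\widetilde Y_x$'' and then invoke the uniqueness of Theorem~2.1. But $u$ is the weak solution of the \emph{linear} problem $L_2u=-G(\cdot,u_0,v_0)$, so Fukushima's decomposition yields
\[
du(X(t))=-\bigl[G(X(t),u_0(X(t)),v_0(X(t)))+q(X(t))u(X(t))\bigr]dt+\phi(X(t))\,dL_t+\langle\nabla u(X(t)),dM_x(t)\rangle,
\]
with the driver evaluated at $(u_0,v_0)$, not at $(u(X(t)),\nabla u(X(t)))$. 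Hence $e^{\int_0^t q}u(X(t))$ does \emph{not} satisfy the nonlinear BSDE with generator $\widetilde G(\cdot,y,z)$ unless one already knows $u=u_0$ and $\nabla u=v_0$; your appeal to Theorem~2.1 is circular. Nor can you recast both processes as solutions of the same ``frozen'' linear BSDE and invoke Lemma~2.1/Theorem~2.1 with the weight $e^{\int_0^t\tilde h}$: for a $(y,z)$--independent generator, condition~(A.1) forces $a_1(t)\le0$, whereas matching $a=\tilde h=-h_1+\delta h_2^2$ would require $a_1=h_1$, which is not assumed nonpositive.

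The paper closes this gap differently. One subtracts the two semimartingale decompositions so that the $G$--terms and the boundary terms cancel, leaving for $v_x(t)=u(X(t))-Y_x(t)$ the linear relation
\[
e^{\int_0^t h}v_x(t)=e^{\int_0^T h}v_x(T)-\int_t^T\tilde h(s)\,e^{\int_0^s h}v_x(s)\,ds-\int_t^T e^{\int_0^s h}\langle R_x(s),dM_x(s)\rangle .
\]
An iteration in $T$ (a Picard/Gronwall argument on the conditional expectation) gives $v_x(t)=E_x\bigl[v_x(T)\,e^{\int_t^T q(X(s))ds}\,\big|\,\mathcal F_t\bigr]$, and then Lemma~3.3 (via condition~(\ref{condition2 in main theorem})) provides $E_x[e^{\int_0^T q(X(s))ds}]\to0$, forcing $v_x\equiv0$. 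Your ``hard part'' paragraph focuses on regularity issues for Fukushima's decomposition, but those are routine; the genuine obstacle is this mismatch of drivers, and some argument like the paper's is needed to resolve it. (Note, by contrast, that your uniqueness argument for two weak solutions of the \emph{nonlinear} PDE is fine: there Fukushima does produce the nonlinear driver, and Theorem~2.1 applies directly, exactly as in the paper.)
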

\begin{proof}\\
Set
$$\tilde{G}(X(t),y,z):=e^{\int_{0}^{t}q(X(u))dt}G(x,e^{-\int_{0}^{t}q(X(u))dt}y,e^{-\int_{0}^{t}q(X(u))dt}z).$$
Then
\begin{eqnarray}
(y_{1}-y_{2})(\tilde{G}(X(t),y_{1},z)-\tilde{G}(X(t),y_{2},z))\leq -h_{1}(x)|y_{1}-y_{2}|^{2}
\end{eqnarray}
and
\begin{eqnarray}
&&|\tilde{G}(X(t),y,z_{1})-\tilde{G}(X(t),y,z_{2})|\leq h_{2}|z_{1}-z_{2}|.
\end{eqnarray}
Note that
$$
\tilde{G}(X(t),y,z)\leq e^{\int_{0}^{t}q(X(u))dt}\|G\|_{\infty}.
$$
By Theorem 2.1 there exists a unique process $(\hat{Y}_{x},\hat{Z}_{x})$ satisfying
\begin{eqnarray}
&&d\hat{Y}_{x}(t)=-\tilde{G}(X(t),\hat{Y}_{x}(t),\hat{Z}_{x}(t))dt+e^{\int_{0}^{t}q(X(u))du}\phi(X(t))dL(t)+\langle \hat{Z}_{x}(t),dM_{x}(t)\rangle\nonumber\\
&&e^{\int_{0}^{t}\tilde{h}(u)du}\hat{Y}_{x}(t)\rightarrow 0 \quad  as \quad t\rightarrow\infty.\nonumber
\end{eqnarray}
Furthermore, Corollary $\ref{cor. y0 upper bounds}$ implies that $\displaystyle{\sup_{x}\hat{Y}_{x}(0)}<\infty$.\\
From Ito's formula, it follows that
\begin{eqnarray}
&&d(e^{-\int_{0}^{t}q(X(u))dt}\hat{Y}_{x}(t))\nonumber\\
&=&-q(X(t))e^{-\int_{0}^{t}q(X(u))dt}\hat{Y}_{x}(t)dt-e^{-\int_{0}^{t}q(X(u))dt}\tilde{G}(X(t),\hat{Y}_{x}(t),\hat{Z}_{x}(t))dt\nonumber\\
&&+\phi(X(t))dL_{t}+<e^{-\int_{0}^{t}q(X(u))dt}\hat{Z}_{x}(t),dM_{x}(t)>.\nonumber
\end{eqnarray}
Setting $Y_{x}(t):=e^{-\int_{0}^{t}q(X(u))dt}\hat{Y}_{x}(t)$ and $Z_{x}(t):=e^{-\int_{0}^{t}q(X(u))dt}\hat{Z}_{x}(t)$, we obtain
$$
dY_{x}(t)=-(q(X(t))Y_{x}(t)+{G}(X(t),Y_{x}(t),Z_{x}(t)))dt+\phi(X(t))dL_{t}+<{Z}_{x}(t),dM_{x}(t)>.
$$
Moreover,
\begin{eqnarray}
e^{\int_{0}^{t}h(u)dt}Y_{x}(t)=e^{\int_{0}^{t}h(u)dt}e^{-\int_{0}^{t}q(X(u))dt}\hat{Y}_{x}(t)=
e^{\int_{0}^{t}\tilde{h}(X(u))dt}\hat{Y}_{x}(t)\rightarrow0\quad as\quad t\rightarrow\infty.
\end{eqnarray}
So by Ito's formula, we have that, for any $t<T$,
\begin{eqnarray}
\label{first BSDE}
&&e^{\int_{0}^{t}h(u)du}Y_{x}(t)\nonumber\\
&=&e^{\int_{0}^{T}h(u)du}Y_{x}(T)
+\int_{t}^{T}e^{\int_{0}^{s}h(u)du}\left(G(X_{x}(s),Y_{x}(s),Z_{x}(t))+q(X_{x}(s))Y_{x}(s)\right)ds\nonumber\\
&-&\int_{t}^{T}e^{\int_{0}^{s}h(u)du}\phi(X(s))dL_{s}
-\int_{t}^{T}h(s)e^{\int_{0}^{s}h(u)du}Y_{x}(s)ds\nonumber\\
&-&\int_{t}^{T}e^{\int_{0}^{s}h(u)du}\langle Z_{x}(t),dM_{x}(t)\rangle.
\end{eqnarray}
Put $u_{0}(x)=Y_{x}(0)$ and $v_{0}(x)=Z_{x}(0)$.\\
Since $Y_{x}(0)=\hat{Y}_{x}(0)$, we know that $u_{0}$ is a bounded function on domain $D$.
By the Markov property of $X$ and the uniqueness of $(Y_{x},Z_{x})$ , it is easy to see that
$$
Y_{x}(t)=u_{0}(X(t)),\quad Z_{x}(t)=v_{0}(X(t)).
$$
So that $\displaystyle{\sup_{x\in D, t>0}|Y_{x}(t)|}\leq \|u_{0}\|_{\infty}<\infty.$\\
Now consider the following problem:
\begin{eqnarray}
\label{linear case}
\left\{\begin{array}{ll}
{L_{2}}u(x)=-G(x,u_{0}(x),v_{0}(x)),&\textrm{on $D$}\\
\frac{1}{2}\frac{\partial u}{\partial{\gamma}}(x)=\phi(x) &\textrm{on $\partial D$ }
\end{array}\right.
\end{eqnarray}
By Theorem 3.1, problem $(\ref{linear case})$ has a unique continuous weak solution $u(x)$. Next we will show that $u=u_{0}$. \\
Since $u$ belongs to the domain of the Dirichlet form associated with the process $X(t)$, it follows from the Fukushima's decomposition that:
\begin{eqnarray}
&&du(X(t))\nonumber\\
&=&-[G(X(t),u_{0}(X(t)),v_{0}(X(t)))+q(X(t))u(X(t))]dt+\phi(X(t))dL(t)+\langle\nabla u(X(t)),dM_{x}(t)\rangle\nonumber\\
&=&-[G(X(t),Y_{x}(t),Z_{x}(t))+q(X(t))u(X(t))]+\phi(X(t))dL(t)+\langle\nabla u(X(t)),dM_{x}(t)\rangle\nonumber
\end{eqnarray}
From the condition $(\ref{condition1 in main theorem})$ and the boundedness of $u(x)$,  it follows that
$$
\lim_{t\rightarrow\infty}E_{x}[e^{2\int_{0}^{t}h(u)du}u^{2}(X(t))]\leq \|u\|^{2}_{\infty}\lim_{t\rightarrow\infty}E_{x}[e^{2\int_{0}^{t}(\tilde{h}+q)(u)du}]=0.
$$
By Ito's formula, it follows that, for any $t<T$,
\begin{eqnarray}
\label{second BSDE}
&&e^{\int_{0}^{t}h(u)du}u(X(t))\nonumber\\
&=&e^{\int_{0}^{T}h(u)du}u(X(T))+\int_{t}^{T}e^{\int_{0}^{s}h(u)du}[G(X(s),Y_{x}(s),Z_{x}(s))+q(X(s))u(X(s))]ds\nonumber\\
&-&\int_{t}^{T}e^{\int_{0}^{s}h(u)du}\phi(X(s))dL(s)
-\int_{t}^{T}h(s)e^{\int_{0}^{s}h(u)du}u(X(s))ds\nonumber\\
&-&\int_{t}^{T}e^{\int_{0}^{s}h(u)du}\langle \nabla u(X(t)),dM_{x}(t)\rangle.
\end{eqnarray}
Set
$$
v_{x}(t)=u(X(t))-Y_{x}(t)\quad and\quad R_{x}(t)=\nabla u(X(t))-Z_{x}(t).
$$
Subtracting the equations $(\ref{first BSDE})$ from $(\ref{second BSDE})$, we obtain the following equation: for any $t<T$,
\begin{eqnarray}
\label{equation of v}
&&e^{\int_{0}^{t}h(u)du}v(X(t))\nonumber\\
&=&e^{\int_{0}^{T}h(u)du}v(X(T))+\int_{t}^{\infty}(q(X(u))-h(u))e^{\int_{0}^{s}h(u)du}v(X(s))ds\nonumber\\
&&-\int_{t}^{\infty}e^{\int_{0}^{s}h(u)du}<R_{x}(t), ,dM_{x}(t)>\nonumber\\
&=&e^{\int_{0}^{T}h(u)du}v(X(T))-\int_{t}^{T}\tilde{h}(s)e^{\int_{0}^{s}h(u)du}v(X(s))ds\nonumber\\
&&-\int_{t}^{T}e^{\int_{0}^{s}h(u)du}<R_{x}(t),dM_{x}(t)>.\nonumber
\end{eqnarray}
Set $g(t)=e^{\int_{0}^{t}h(u)du}v(t)$. Taking conditional expectation on both sides of $(\ref{equation of v})$,
we find that
\begin{eqnarray}
g(t)&=&E_{x}[g(T)-\int_{t}^{T}\tilde{h}(s)g(s)ds|\mathcal{F}_{t}]\nonumber\\
&=&E_{x}[g(T)(1-\int_{t}^{T}\tilde{h}(s)ds)+\int_{t}^{T}\int_{s}^{T}\tilde{h}(s)\tilde{h}(s_{1})g(s_{1})ds_{1}ds|\mathcal{F}_{t}]\nonumber\\
&=&E_{x}[g(T)(1-\int_{t}^{T}\tilde{h}(s)ds+\frac{1}{2}(\int_{t}^{T}\tilde{h}(s)ds)^{2})\nonumber\\
&&+(-1)^{3}\int_{t}^{T}\int_{s}^{T}\int_{s_{1}}^{T}\tilde{h}(s)\tilde{h}(s_{1})\tilde{h}(s_{2})g(s_{2})ds_{2}ds_{1}ds|\mathcal{F}_{t}].\nonumber
\end{eqnarray}
Keeping iterating, we obtain
\begin{eqnarray}
g(t)&=&E_{x}[g(T)(\sum_{k=0}^{n}\frac{(-\int_{t}^{T}\tilde{h}(s)ds)^{n}}{n!})\nonumber\\
&&+(-1)^{n+1}\int_{t}^{T}\int_{s}^{T}\int_{s_{1}}^{T}...\int_{s_{n-1}}^{T}
\tilde{h}(s)\tilde{h}(s_{1})...\tilde{h}(s_{n})g(s_{n})ds_{n}...ds_{1}ds|\mathcal{F}_{t}]\nonumber
\end{eqnarray}
Since $E_{x}[|g(T)|e^{\int_{t}^{T}|\tilde{h}|(s)ds}]<\infty$, letting $n\rightarrow \infty$, by dominated convergence theorem, it follows that
\begin{eqnarray}
g(t)=E_{x}[g(T)e^{-\int_{t}^{T}\tilde{h}(s)ds}|\mathcal{F}_{t}].\nonumber
\end{eqnarray}
Then
\begin{eqnarray}
v(t)=E_{x}[v(T)e^{\int_{t}^{T}(h(s)-\tilde{h}(s))ds}|\mathcal{F}_{t}]\leq (\|u_{0}\|_{\infty}+\|u\|_{\infty})E_{x}[e^{\int_{t}^{T}q(X(s))ds}|\mathcal{F}_{t}].
\end{eqnarray}
Hence, it follows that
\begin{eqnarray}
0\leq e^{\int_{0}^{t}q(X(s))ds}|v(t)|\leq (\|u_{0}\|_{\infty}+\|u\|_{\infty})\lim_{T\rightarrow\infty}E_{x}[e^{\int_{0}^{T}q(X(s))ds}|\mathcal{F}_{t}].
\end{eqnarray}
Since the condition $(\ref{condition2 in main theorem})$ implies
$$
\lim_{T\rightarrow\infty}E_{x}[e^{\int_{0}^{T}q(X(s))ds}]=0,
$$
we deduce that $E_{x}[e^{\int_{0}^{t}q(X(s))ds}|v(t)|]=0$ and hence  $v(t)=0$, $P_{x}-a.s.$.\\
Therefore, for any $t>0$, we have $u(X(t))=Y_{x}(t)$ and $\nabla u(X(t))=Z_{x}(t)$ by the uniqueness of the Doob-Meyer decomposition of semimartingales. In particular,  $u(x)=E_{x}[u(X_{x}(0))]=E_{x}[Y_{x}(0)]=u_{0}(x)$. This shows that $u(x)$ is a weak solution of the equation $(\ref{semilinear L2})$.\\
If $\tilde{u}$ is another solution of the problem $(\ref{semilinear L2})$. Then the processes $\tilde{Y}_{x}(t):=\tilde {u}(X(t))$ and $\tilde{Z}_{x}(t):=\nabla \tilde{u}(X(t))$ satisfy the following equation
\begin{eqnarray}
d\tilde{Y}_{x}(t)=-G(X(t),\tilde{Y}_{x}(t),\tilde{Z}_{x}(t))dt-\phi(X(t))dL_{t}+<\tilde{Z}_{x}(t),dM_{x}(t)>.
\end{eqnarray}
Set $\bar{Y}_{x}(t)=e^{\int_{0}^{t}q(X(u))du}\tilde{Y}_{x}(t)$ and $\bar{Z}_{x}(t)=e^{\int_{0}^{t}q(X(u))du}\tilde{Z}_{x}(t)$.\\
By chain rule, it follows that
$$
d\bar{Y}_{x}(t)=-\tilde{G}(X(t),\bar{Y}_{x}(t),\bar{Z}_{x}(t))dt+e^{\int_{0}^{t}q(X(u))du}\phi(X(t))dL(t)+\langle \bar{Z}_{x}(t),dM_{x}(t)\rangle
$$
Moreover, because $\tilde{u}$ is bounded, we have
$$
\lim_{t\rightarrow \infty}e^{\int_{0}^{t}\tilde{h}(u)du}\bar{Y}_{x}(t)=\lim_{t\rightarrow \infty}e^{\int_{0}^{t}{h}(u)du}\tilde {u}(X(t))=0.
$$
Therefore, from the uniqueness of the solution of the BSDE in Theorem 2.1, we have
$$
\tilde{Y}_{x}(t)={Y}_{x}(t)\quad \tilde{Z}_{x}(t)={Z}_{x}(t).
$$
In particular,
$$
\tilde{u}(x)=E_{x}[\tilde{Y}_{x}(t)]=E_{x}[{Y}_{x}(t)]=u(x).
$$
\end{proof}


\section{Semilinear Elliptic PDEs with Singular Coefficients}
\setcounter{equation}{0}
Recall the operator
$$
L=\frac{1}{2}\nabla\cdot(A\nabla)+B\cdot\nabla-\nabla\cdot(\hat{B}\cdot)+Q
$$
on the domain $D$ equipped with the mixed boundary condition on $\partial D$:
\begin{eqnarray}
\frac{1}{2}\frac{\partial u}{\partial \gamma}-\langle\hat{B},{n}\rangle u(x)=0.\nonumber
\end{eqnarray}
The quadratic form associated with $L$ is given by:
\begin{eqnarray}
\mathcal{Q}(u,v):=(-Lu,v)=&&\frac{1}{2}\sum_{i,j}\int_{D}a_{ij}(x)\frac{\partial u}{\partial x_{i}}\frac{\partial v}{\partial x_{j}}dx-\sum_{i}\int_{D}B_{i}(x)\frac{\partial u}{\partial x_{i}}v(x)dx\nonumber\\
&-&\sum_{i}\int_{D}\hat{B}_{i}(x)\frac{\partial v}{\partial x_{i}}u(x)dx-\int_{D}Q(x)u(x)v(x)dx,\nonumber
\end{eqnarray}
where $(\cdot.\cdot)$ stands for the inner product in $L^{2}(D)$.\\
The domain of the quadratic form is
$$
\mathcal{D}(\mathcal{Q})=W^{1,2}(D):=\{u:u\in L^{2}(D),\frac{\partial u}{\partial x_{i}}\in L^{2}(D),i=1,...,d\}.
$$
Let $\{S_{t}$, $t\geq 0\}$ denote the semigroup generated by $L$.\\\\
In this section, our  aim is to solve the following equation:
\begin{eqnarray}
\label{final equation}
\left\{\begin{array}{ll}
{L}f(x)=-F(x,f(x)),&\textrm{on $D$}\\
\frac{1}{2}\frac{\partial f}{\partial \gamma}(x)-<\widehat{B},n>(x)f(x)=\Phi(x) &\textrm{on $\partial D$ }
\end{array}\right.
\end{eqnarray}
\begin{definition}
A bounded continuous function $f(x)$ defined on D is called a weak solution of the equation $(\ref{final equation})$ if $f\in W^{1,2}$, and for any $g\in C^{\infty}(\bar{D})$,
\begin{eqnarray}
\mathcal{Q}(u,g)=\int_{\partial D}\Phi(x)g(x)\sigma(dx)+\int_{D}F(x,u(x))g(x)dx\nonumber.
\end{eqnarray}
\end{definition}
Here the function $F:R^{d}\times R\rightarrow R$ is a bounded measurable function and satisfies the following condition:\\
\textbf{(E.1)} $ (y_{1}-y_{2})(F(x,y_{1})-F(x,y_{2}))\leq-r_{1}(x)|y_{1}-y_{2}|^{2}$.\\
\\
Recall the following regular Dirichlet form
\begin{eqnarray}
\left\{\begin{array}{ll}
\mathcal{E}^{0}(u,v)=\frac{1}{2}\sum_{i,j}\int_{D}a_{ij}(x)\frac{\partial u}{\partial x_{i}}\frac{\partial v}{\partial x_{j}}dx,\\
D(\mathcal{E}^{0})=W^{1,2}(D)
\end{array}\right.
\end{eqnarray}
associated with the operator $L_{0}=\frac{1}{2}\nabla(A\nabla)$ equipped with the Neumann boundary condition $\frac{\partial}{\partial \gamma}=0$ on $\partial D$.\\
The associated reflecting diffusion process is denoted by $\{\Omega,\mathfrak{F}_{t},X^{0}_{t},\theta^{0}_{t},\gamma^{0}_{t},P^{0}_{x}\}$. Here $\theta^{0}_{t}$ and $\gamma^{0}_{t}$ are the shift and reverse operators defined by
\begin{eqnarray}
X^{0}_{s}(\theta^{0}_{t}(\omega))&=&X^{0}_{t+s}(\omega),  s,t\geq0 \nonumber\\
X^{0}_{s}(\gamma^{0}_{t}(\omega))&=&X^{0}_{t-s}(\omega),   s\leq t \nonumber.
\end{eqnarray}
The process $(X^{0}_{t})_{t\geq0}$ has the decomposition in $(\ref{decomposition of X0})$.
The martingale part of $X^{0}_{t}$ is $M^{0}_{t}=\int_{0}^{t}\sigma(X^{0}_{s})dW_{s}$.\\
The following probabilistic representation of semigroup $S_{t}$ was proved in ${\cite{CFKZ}}$
\begin{eqnarray}
S_{t}f(x)=&&E^{0}_{x}[f(X^{0}_{t})\exp(\int_{0}^{t}(A^{-1}B)^{*}(X^{0}_{s})dM^{0}_{s}+(\int_{0}^{t}(A^{-1}\hat {B})^{*}(X^{0}_{s})dM^{0}_{s})\circ \gamma^{0}_{t}\nonumber\\
&&{}-\frac{1}{2}\int_{0}^{t}(B-\hat{B})A^{-1}(B-\hat{B})^{*}(X^{0}_{s})ds                                                                                                                                                                                                                                                                                                                                                                                                                                                                                                                                                                                                                                                                                                                                                                                                                                                                                                                                                                                                                                                                                                                                                                                                                                                                                                                                                                                                                                                                                                                                                                                                               + \int_{0}^{t}Q(X^{0}_{s})ds)]
\end{eqnarray}
$E^{0}_{x}$ denotes the expectation under $P^{0}_{x}$.\\
Set
\begin{eqnarray}
\hat{Z}_{t}&=&\exp(\int_{0}^{t}(A^{-1}B)^{*}(X^{0}_{s})dM^{0}_{s}+(\int_{0}^{t}(A^{-1}\hat {B})^{*}(X^{0}_{s})dM^{0}_{s})\circ \gamma^{0}_{t}\nonumber\\
&&{}-\frac{1}{2}\int_{0}^{t}(B-\hat{B})A^{-1}(B-\hat{B})^{*}(X^{0}_{s})ds                                                                                                                                                                                                                                                                                                                                                                                                                                                                                                                                                                                                                                                                                                                                                                                                                                                                                                                                                                                                                                                                                                                                                                                                                                                                                                                                                                                                                                                                                                                                                                                                              + \int_{0}^{t}Q(X^{0}_{s})ds).
\end{eqnarray}

By ${\cite{CZ}}$ and $\cite{YZ}$, there exists a bounded, continuous functions $v\in W^{1,p}(D)$ satisfying that
\begin{eqnarray}
\label{function v}
&&(\int_{0}^{t}(A^{-1}\hat {B})^{*}(X^{0}_{s})dM^{0}_{s})\circ \gamma^{0}_{t}\nonumber\\
&=&-\int_{0}^{t}\nabla v (X^{0}_{s})dM_{s}+v(X^{0}_{t})-v(X^{0}_{0})-\int_{0}^{t}(A^{-1}\hat {B})^{*}(X^{0}_{s})dM_{s}
\end{eqnarray}
Moreover, $v$ satisfies the following equations: for $g\in C^{1}(\bar{D})$,
\begin{eqnarray}
\int_{D}<A\nabla v,\nabla g>(x)dx=\int_{D}<\hat{B},\nabla g>(x)dx.
\end{eqnarray}
Thus the representation of $S_{t}$ becomes:
\begin{eqnarray}
\label{relationship between the two semigroups}
S_{t}f(x)
&=&e^{-v(x)}E^{0}_{x}[f(X^{0}_{t})e^{v(X^{0}_{t})}\exp(\int_{0}^{t}(A^{-1}(B-\hat B- A\nabla v))^{*}dM^{0}_{s}\nonumber\\
&&-\frac{1}{2}\int_{0}^{t}(B-\hat B- A\nabla v)^{*}A^{-1}(B-\hat B- A\nabla v)(X^{0}_{s})ds\nonumber\\
&&+\int_{0}^{t}(Q+\frac{1}{2}(\nabla v)A(\nabla v)^{*}-\langle B-\hat B, \nabla v\rangle)(X^{0}_{s})ds)]\nonumber\\
&=&e^{-v(x)}\tilde{S}_{t}[fe^{v}](x).
\end{eqnarray}
Here, setting
$b:=B-\hat{B}-(A\nabla v)$ and $q:=Q+\frac{1}{2}(\nabla v)A(\nabla v)^{*}-\langle B-\hat B, \nabla v\rangle$, we see that $\tilde{S}_{t}$ is the semigroup generated by the following operator:
\begin{eqnarray}
L_{2}&=&\frac{1}{2}\nabla\cdot(A\nabla)+(B-\hat{B}-(A\nabla v))\cdot\nabla+(Q+\frac{1}{2}(\nabla v)A(\nabla v)^{*}-\langle B-\hat B, \nabla v\rangle)\nonumber\\
&=£º&\frac{1}{2}\nabla\cdot(A\nabla)+b\cdot\nabla+q\nonumber
\end{eqnarray}
equipped with the boundary condition $\frac{\partial}{\partial \gamma}=0$.\\\\
In this section, we will stick to this particular choice of $b$ and $q$.\\\\
Recall that
$$
\tilde{M}(t)=e^{\int^{t}_{0}A^{-1}b(X^{0}_{s})dM^{0}_{s}-\frac{1}{2}\int_{0}^{t}bA^{-1}b^{*}(X^{0}_{s})ds}
$$
and set $Z_{t}=\tilde{M}(t)e^{\int_{0}^{t}q(X^{0}_{s})ds}$.\\
Then from $(\ref{function v})$, it follows that
$
\hat{Z}(t)=Z_{t}e^{v(X^{0}_{t})-v(X^{0}_{0})}.
$\\
Recall the operator $L_{1}=\frac{1}{2}\nabla\cdot(A\nabla)+b\cdot\nabla$ with Neumann boundary condition, which is associated with the reflecting diffusion $(X(t),P_{x})$. It is known from $\cite{LZ}$ that
\begin{eqnarray}
dP_{x}|_{\mathcal{F}_{t}}=\tilde{M}_{t}dP^{0}_{x}|_{\mathcal{F}_{t}},\nonumber
\end{eqnarray}
and
\begin{eqnarray}
X(t)=x+\int_{0}^{t}\sigma(X(s))dW_{s}+\int_{0}^{t}(\frac{1}{2}\nabla A+b)(X(s))ds+\int_{0}^{t}{\gamma}(X(s))dL_{s},\quad P_{x}-a.s.\nonumber
\end{eqnarray}
where $\{W_{t}\}$ is a d-dimensional Brownian motion and $L_{t}$ is the local time satisfying that
$L_{t}=\int_{0}^{t}I_{\partial D}(X(s))dL_{s}$.\\
\begin{lemma}
\label{lemma integrable condition dealing with 2q}
Assume that there exists $x_{0}\in D$, such that
\begin{eqnarray}
\label{integrable condition dealing with  2q}
E^{0}_{x_{0}}[\int_{0}^{\infty}|\hat{Z}_{t}|^{2}e^{\int_{0}^{t}(2Q-4r_{1})(X^{0}_{u})du}dL^{0}_{t}]<\infty.
\end{eqnarray}
Then there exists a positive number $\varepsilon>0$, if $\|\hat{B}\|_{L^{p}}\leq \varepsilon$, the following inequality holds:
\begin{eqnarray}
\label{integrability of 2q}
\sup_{x\in D}E_{x}[\int_{0}^{\infty}e^{2\int_{0}^{t}(-r_{1}+q)(X(u))du}dt]<\infty.
\end{eqnarray}
\end{lemma}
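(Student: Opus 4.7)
The strategy is to combine a Cauchy--Schwarz factorization of the target integrand with the gauge theorem (Lemma~3.3) applied to an auxiliary ``doubled'' operator whose multiplicative functional is essentially $|\hat Z_t|^2\,e^{\int_0^t(2Q-4r_1)(X^0_u)\,du}$. First, using $dP_x|_{\mathcal F_t}=\tilde M_t\,dP^0_x|_{\mathcal F_t}$ together with $\hat Z_t=Z_t\,e^{v(X^0_t)-v(X^0_0)}$ and $Z_t=\tilde M_t\,e^{\int_0^t q\,du}$, I split the target integrand
\[
\tilde M_t\,e^{\int_0^t(2q-2r_1)\,du}=A_t\cdot B_t,
\]
with $A_t:=|\hat Z_t|\,e^{\int_0^t(Q-2r_1)\,du}$ and $B_t:=e^{\int_0^t(q-Q)\,du}\,e^{v(X^0_0)-v(X^0_t)}$. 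A direct check gives $A_t^2=|\hat Z_t|^2\,e^{\int_0^t(2Q-4r_1)\,du}$, precisely the hypothesis integrand, while $B_t^2=e^{\int_0^t(2q-2Q)\,du}\,e^{2(v(X^0_0)-v(X^0_t))}$. By Cauchy--Schwarz,
\[
\sup_x E^0_x\!\left[\tilde M_t\,e^{\int_0^t(2q-2r_1)\,du}\right]\le\Bigl(\sup_x E^0_x[A_t^2]\Bigr)^{1/2}\Bigl(\sup_x E^0_x[B_t^2]\Bigr)^{1/2},
\]
so the claim will follow once the first factor decays exponentially at rate $\beta>0$ and the second grows at a strictly smaller rate $\gamma<\beta$: integrating in $t$ and interchanging with $\sup_x$ via Fubini--Tonelli then yields the finiteness (using $E_x[\cdots]=E^0_x[\tilde M_t\,\cdots]$).

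For the first factor, $\tilde M_t^2=\tilde N_t\,e^{\int_0^t bA^{-1}b^*\,ds}$, where $\tilde N_t$ is the exponential martingale generated by $2A^{-1}b$, yields
\[
A_t^2=\tilde N_t\,e^{\int_0^t V(X^0_s)\,ds}\,e^{2(v(X^0_t)-v(X^0_0))},\qquad V:=bA^{-1}b^*+2q+2Q-4r_1.
\]
Under the paper's standing regularity $|b|^2,\,q,\,Q,\,r_1\in L^p(D)$, the pair $(2b,V)$ satisfies the hypotheses of Lemma~3.3, and $\tilde N_t\,e^{\int V\,du}$ is the multiplicative functional of the operator $L_0+2b\cdot\nabla+V$. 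Boundedness of $v\in W^{1,p}\cap C(\bar D)$ lets me bound the relevant gauge integral $E^0_{x_0}\!\left[\int_0^\infty\tilde N_s\,e^{\int V\,du}\,dL^0_s\right]$ by $e^{C\|v\|_\infty}$ times the hypothesis, so Lemma~3.3 delivers
\[
\sup_x E^0_x[A_t^2]\le K\,e^{-\beta t},\qquad t\ge 0,
\]
for some $K,\beta>0$.

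The main obstacle is controlling $B_t^2$. Writing $2q-2Q=(\nabla v)A(\nabla v)^*-2\langle B-\hat B,\nabla v\rangle$ and invoking elliptic regularity for the Neumann-type problem $\int\langle A\nabla v,\nabla g\rangle\,dx=\int\langle\hat B,\nabla g\rangle\,dx$ defining $v$, I obtain $\|\nabla v\|_{L^{2p}}\le C\|\hat B\|_{L^{2p}}$ and $\|v\|_\infty\to 0$ as $\|\hat B\|_{L^p}\to 0$; H\"older's inequality then gives $\|2q-2Q\|_{L^{p/2}}\to 0$. A Khasminskii-type bound followed by strong-Markov iteration (exploiting that the mean gauge over a short interval becomes less than $1/2$ once $\|2q-2Q\|$ is small enough) produces
\[
\sup_x E^0_x\!\left[e^{\int_0^t|2q-2Q|\,du}\right]\le C'\,e^{\gamma t},
\]
with $\gamma=\gamma(\|\hat B\|_{L^p})\to 0$. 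Choosing $\varepsilon>0$ so small that $\|\hat B\|_{L^p}\le\varepsilon$ forces $\gamma<\beta$ then completes the argument. The delicate point is that none of the $-4r_1$ exponent in the hypothesis can be diverted to $B_t^2$ without making $A_t^2$ fail to match the hypothesis, so the exponential decay must come entirely through the doubled gauge, while the smallness of $\hat B$ is used solely to prevent $B_t^2$ from outpacing that decay.
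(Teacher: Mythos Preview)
Your proposal is correct and follows essentially the same route as the paper: both arguments pass to $P^0_x$ via Girsanov, split the integrand by Cauchy--Schwarz into a factor $\hat Z_t^2\,e^{\int_0^t(2Q-4r_1)\,du}$ (controlled by Lemma~3.3 to decay like $e^{-\beta t}$ using the hypothesis as the gauge condition for a ``doubled'' operator) and a factor involving only $2(q-Q)=\langle A\nabla v-2(B-\hat B),\nabla v\rangle$ (which grows like $e^{c_4 t}$ with $c_4\to 0$ as $\|\hat B\|_{L^p}\to 0$ via $\|\nabla v\|_{L^p}\le C\|\hat B\|_{L^p}$), and then choose $\varepsilon$ so that $c_4<\beta$. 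The only cosmetic differences are that the paper absorbs $e^{v(X^0_0)-v(X^0_t)}$ into a constant before applying Cauchy--Schwarz (rather than carrying it inside $B_t^2$), cites \cite{LLZ} directly for the second-factor bound rather than sketching a Khasminskii/iteration argument, and does not use (or need) your observation that $\|v\|_\infty\to 0$.
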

\begin{proof}
\begin{eqnarray}
E_{x}[e^{2\int_{0}^{t}(-r_{1}+q)(X(u))du}]&=&E^{0}_{x}[\tilde{M}(t)e^{2\int_{0}^{t}(-r_{1}+q)(X^{0}_{u})du}]\nonumber\\
&=&E^{0}_{x}[Z(t)e^{\int_{0}^{t}(-2r_{1}+q)(X(u))du}]\nonumber\\
&\leq&C_{1}E^{0}_{x}[\hat{Z}(t)e^{-2\int_{0}^{t}(r_{1}(X(u))du}e^{\int_{0}^{t}(Q+\frac{1}{2}<A\nabla v-2(B-\hat{B}),\nabla v>)(X_{u}^{0})du}]\nonumber\\
&\leq& C_{1}E^{0}_{x}[\hat{Z}^{2}(t)e^{2\int_{0}^{t}(Q-2r_{1})(X_{u}^{0})du}]^{\frac{1}{2}}\cdot
E^{0}_{x}[e^{\int_{0}^{t}<A\nabla v-2(B-\hat{B}),\nabla v>(X_{u}^{0})du}]^{\frac{1}{2}}\nonumber
\end{eqnarray}
By Lemma 3.3 and condition $(\ref{integrable condition dealing with  2q})$, there exists two constant $c_{2},\beta>0$ such that
$$
\sup_{x\in D}E_{x}[\hat{Z}^{2}(t)e^{2\int_{0}^{t}(Q-r_{1})(X_{u}^{0})du}]<c_{2}e^{-\beta t}.
$$
Moreover, for $p>d$, by the Theorem 2.1 in $\cite{LLZ}$, there exist two positive constants $c_{3}$ and $c_{4}$ such that
$$
E^{0}_{x}[e^{\int_{0}^{t}<A\nabla v-2(B-\hat{B}),\nabla v>(X_{u}^{0})du}]\leq c_{3}e^{c_{4}t},
$$
where $c_{4}=c\|<A\nabla v-2(B-\hat{B}),\nabla v>\|_{L^{p/2}}$. \\
Since $|\nabla v|_{L^{p}}\leq C|\hat{B}|_{L^{p}(D)}$ (see $\cite{YZ}$), there exists $\varepsilon>0$, such that $|\hat{B}|_{L^{p}(D)}\leq \varepsilon$ implies $c_{4}<\beta$. Thus
$(\ref{integrability of 2q})$ holds. $\square$
\end{proof}
\begin{theorem}
Assume $(\ref{integrable condition dealing with  2q})$  and for some point $x_{0}\in D$
\begin{eqnarray}
\label{condition of last theorem}
E^{0}_{x_{0}}[\int_{0}^{\infty}\hat{Z}_{s}dL^{0}_{t}]<\infty
\end{eqnarray}
Then there exists $\varepsilon>0$ such that if $\|\hat{B}\|_{L^{p}}\leq \varepsilon$, the problem $(\ref{final equation})$ has a unique, bounded, continuous weak solution $u(x)$.
\end{theorem}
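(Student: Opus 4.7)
The plan is to use the $h$-transform $f(x) = e^{-v(x)}u(x)$ (equivalently $u = e^v f$), which, via the semigroup relation $S_tf = e^{-v}\tilde{S}_t[e^vf]$ of (\ref{relationship between the two semigroups}), converts the mixed boundary value problem (\ref{final equation}) for $L$ into a semilinear Neumann boundary value problem for $L_2 = \frac{1}{2}\nabla\cdot(A\nabla) + b\cdot\nabla + q$ of the form treated in Section 4. At the boundary, the identity $\langle A\nabla v, n\rangle = \langle \hat{B}, n\rangle$ on $\partial D$ that emerges from the defining weak equation for $v$ by integration by parts is what reconciles the mixed boundary term $\langle \hat{B}, n\rangle f$ with a pure Neumann condition on $u$ at the weak level: after substituting $f = e^{-v}u$ into $\mathcal{Q}$ and taking the test function in the form $g = e^{-v}\tilde g$, repeated use of the $v$-equation absorbs all the cross terms generated by $\nabla(e^{-v})$, leaving the weak form of the problem
\[
L_2 u = -G(x,u)\text{ on }D,\qquad \frac{1}{2}\frac{\partial u}{\partial\gamma} = \phi(x)\text{ on }\partial D,
\]
with $G(x,u) := e^{v(x)}F(x, e^{-v(x)}u)$ and $\phi$ determined by $\Phi$ and $v$.

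Next I would verify the hypotheses of Theorem \ref{semilinear equation L2} for this transformed problem. Since $v$ is bounded, condition (E.1) on $F$ transfers to (H.1) for $G$ with $h_1 = r_1$ (up to bounded factors), while (H.2) holds trivially with $h_2 = 0$ because $G$ has no gradient dependence. The interior integrability (\ref{condition1 in main theorem}) is exactly the content of Lemma \ref{lemma integrable condition dealing with 2q}, whose inputs are (\ref{integrable condition dealing with  2q}) together with the smallness $\|\hat{B}\|_{L^p}\leq \varepsilon$. For the local-time integrability (\ref{condition2 in main theorem}), I would combine Lemma \ref{relationship between the integrale w.r.t. local times} with the identity $\hat{Z}_t = \tilde M_t e^{\int_0^t q(X^0_s)ds}e^{v(X^0_t)-v(X^0_0)}$ (recorded just before Lemma \ref{lemma integrable condition dealing with 2q}) to obtain
\[
E_{x_0}\Bigl[\int_0^\infty e^{\int_0^t q(X_u)du}\,dL_t\Bigr] = e^{v(x_0)}E^{0}_{x_0}\Bigl[\int_0^\infty e^{-v(X^0_t)}\hat{Z}_t\,dL^0_t\Bigr],
\]
which is finite by (\ref{condition of last theorem}) and boundedness of $v$.

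Theorem \ref{semilinear equation L2} then yields a unique bounded continuous weak solution $u$ of the transformed problem; setting $f := e^{-v}u$ and running the weak-form calculation in reverse shows that $f$ is a bounded continuous weak solution of (\ref{final equation}). Uniqueness of $f$ follows by reversing the transformation once more: any two such solutions $f_1,f_2$ produce $u_i := e^{v}f_i$, two bounded continuous weak solutions of the transformed problem, which must coincide by the uniqueness clause of Theorem \ref{semilinear equation L2}, hence $f_1 = f_2$.

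The hard part will be the rigorous algebraic bookkeeping at the level of the weak formulations: showing that substituting $f = e^{-v}u$ and $g = e^{-v}\tilde g$ into $\mathcal{Q}(f,g)$, and repeatedly invoking the $v$-equation with well-chosen test functions, produces exactly the quadratic form $\mathcal{E}$ of Section 4 tested against $\tilde g$, with the correct boundary integrand on $\partial D$. A secondary but essential point is checking that all integrability estimates propagate consistently under the single smallness hypothesis $\|\hat{B}\|_{L^p}\leq\varepsilon$; the continuous dependence $\|\nabla v\|_{L^p}\leq C\|\hat{B}\|_{L^p}$ recorded in \cite{YZ} is what allows $\varepsilon$ to be chosen uniformly across the chain of estimates.
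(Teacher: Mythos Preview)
Your proposal is correct and follows essentially the same route as the paper's proof: the $h$-transform $f=e^{-v}u$, the transformed nonlinearity $G(x,y)=e^{v(x)}F(x,e^{-v(x)}y)$ (the paper writes $\tilde F$) with $\phi=e^{v}\Phi$, the verification of (\ref{condition1 in main theorem}) via Lemma~\ref{lemma integrable condition dealing with 2q} and of (\ref{condition2 in main theorem}) via the identity $\hat Z_t=Z_te^{v(X^0_t)-v(X^0_0)}$ together with Lemma~\ref{relationship between the integrale w.r.t. local times}, and the weak-form identification $\mathcal{E}(u,e^{-v}\psi)=\mathcal{Q}(f,\psi)$ (which the paper defers to \cite{Z}) all match. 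One small caution: taking $h_2=0$ literally makes some constants in the BSDE estimates (e.g.\ $\frac{1}{2\delta_2 d_2^2}$ in Corollary~\ref{cor. y0 upper bounds}) degenerate, so in practice you should either take $h_2>0$ small and absorb the factor $e^{2\delta h_2^2 t}$ into the exponential decay furnished by the proof of Lemma~\ref{lemma integrable condition dealing with 2q}, or note that the $z$-independence of $G$ lets those arguments be rerun without the $h_2$-mechanism at all.
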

\begin{proof}\\
Existence: Set $\tilde{F}(x,y)=e^{v(x)}F(x,e^{-v(x)}y)$ and $\phi(x)=e^{v(x)}\Phi(x)$.\\
From the boundedness of $v$ , $\tilde{F}$ is also bounded.\\
And $\tilde{F}$  satisfies
$$
(y_{1}-y_{2})(\tilde{F}(x,y_{1})-\tilde{F}(x,y_{2}))\leq-r_{1}(x)|y_{1}-y_{2}|^{2}.
$$
Moreover, there is a constant $c>0$, such that
\begin{eqnarray}
\infty>E^{0}_{x_{0}}[\int_{0}^{\infty}\hat{Z}_{s}dL^{0}_{s}]&=&
E^{0}_{x_{0}}[\int_{0}^{\infty}Z_{s}e^{v(X^{0}_{s})-v(X^{0}_{0})}dL^{0}_{s}]\nonumber\\
&\geq& cE^{0}_{x_{0}}[\int_{0}^{\infty}Z_{s}dL^{0}_{s}]=cE^{0}_{x_{0}}[\int_{0}^{\infty}\tilde{M}_{s}e^{\int_{0}^{s}q(X_{u}^{0})du}dL^{0}_{s}]
\end{eqnarray}
By Lemma $\ref{relationship between the integrale w.r.t. local times}$, we know that, at $x_{0}\in D$,
\begin{eqnarray}
\label{previous condition}
E_{x_{0}}[\int_{0}^{\infty}e^{\int_{0}^{s}q(X_{u})du}dL_{s}]<\infty.
\end{eqnarray}
Furthermore, by Lemma $\ref{boundedness of gauge function}$, it follows that
\begin{eqnarray}
\label{second conclusion}
\sup_{x}E_{x}[\int_{0}^{\infty}e^{\int_{0}^{t}q(X(u))du}dL_{t}]<\infty.
\end{eqnarray}
By Lemma $\ref{lemma integrable condition dealing with 2q}$, the following condition is satisfied :
\begin{eqnarray}
\label{first conclusion}
E_{x}[\int_{0}^{\infty}e^{2\int_{0}^{t}(q-r_{1})(X(u))du}dt]<\infty,
\end{eqnarray}

So $\tilde{F}$ satisfies all of the conditions in Theorem $\ref{semilinear equation L2}$ replacing $G$ by $\tilde{F}$.
Thus the following problem
\begin{eqnarray}
\label{simplified equation}
\left\{\begin{array}{ll}
L_{2}u(x)=-\tilde{F}(x,u(x)),&\textrm{on $D$}\\
\frac{1}{2}\frac{\partial u}{\partial \gamma}(x)=\phi &\textrm{on $\partial D$ }
\end{array}\right.
\end{eqnarray}
has a unique bounded continuous weak solution $u(x)$.\\
Set $f(x)=e^{-v(x)}u(x)$. Then we claim the function $f(x)$ is the weak solution of the equation $(\ref{final equation})$.\\
Because function $v$ is continuous and bounded, $f(x)$ is also continuous. From the fact that function $u$ is the weak solution of the problem $(\ref{simplified equation})$, we obtain, for any function $\psi\in C^{\infty}(D)$,
\begin{eqnarray}
\label{traslation between G and L}
&&\mathcal{E}(u,e^{-v}\psi)=\frac{1}{2}\int_{D}<A\nabla u,\nabla(e^{-v}\psi)>-<b,\nabla u>e^{-v}\psi-e^{-v}qu\psi dx\nonumber\\
&=&\int_{\partial D}e^{-v}\phi\psi d\sigma+\int_{D}\tilde{F}(x,u(x))e^{-v}\psi dx.
\end{eqnarray}
As in the proof of Theorem 5.1 in $\cite{Z}$, we can show that the left side of the equation $(\ref{traslation between G and L})$ equals to
\begin{eqnarray}
\mathcal{Q}(f,\psi)=\frac{1}{2}\int_{D}[<A\nabla f,\nabla \psi>-<B,\nabla u>\psi-<\hat{B},\nabla \psi>f-Qf\psi] dx.\nonumber
\end{eqnarray}
At the same time, by the definition of the function $\phi$ and $\tilde{F}$, the right side of the equation $(\ref{traslation between G and L})$ equals to
\begin{eqnarray}
\int_{\partial D}\Phi\psi d\sigma+\int_{D}{F}(x,f(x))\psi dx.\nonumber
\end{eqnarray}
Thus it follows that, for any $\psi\in C^{\infty}(D)$,
\begin{eqnarray}
\mathcal{Q}(f,\psi)=\int_{\partial D}\Phi\psi d\sigma+\int_{D}{F}(x,f(x))\psi dx.\nonumber
\end{eqnarray}
which proves that function $f$ is a weak solution of the problem $(\ref{final equation})$.\\
Uniqueness: \\
If $\bar{f}$ is another solution of the problem $(\ref{final equation})$, then $\bar{u}:=e^{v}f$ can be shown to be the solution of the equation $(\ref{simplified equation})$. Then by the uniqueness of the problem $(\ref{simplified equation})$ proved in the Theorem $\ref{semilinear equation L2}$, we find $\bar u=u$. Therefore, $f=\bar{f}$.$\square$
\end{proof}


\section{$L^{1}$  solutions of the BSDE and Semilinear PDEs}
\setcounter{equation}{0}
Recall the operator
\begin{eqnarray}
L_{1}=\frac{1}{2}\sum_{i,j=1}^d \frac{\partial}{\partial x_i}\left(a_{ij}(x)\frac{\partial}{\partial x_j}\right)+\sum_{i=1}^d  b_i(x)\frac{\partial}{\partial x_i}\nonumber
\end{eqnarray}
on the domian D equipped with the Neumann boundary condition $\frac{\partial}{\partial \gamma}=0$, on $\partial D$.\\
And  $(\Omega, \mathcal{F}_{t}, X(t), P_{x},x\in D)$ is the reflecting diffusion process associated with the generator $L_{1}$.\\
Then the process $X(t)$ has the following decomposition:
\begin{eqnarray}
X(t)=X(0)+M(t)+\int_{0}^{t}\tilde{b}(X(s))ds+\int_{0}^{t}An(X(s))dL_{s},\quad P_{x}-a.s..\nonumber
\end{eqnarray}
Here $\tilde{b}=\{\tilde{b}_{1},...,\tilde{b}_{d}\}$ with $\tilde{b}_{i}=\frac{1}{2}\sum_{j}\frac{\partial a_{ij}}{\partial x_{j}}+b_{i}$. $M(t)$ is the $\mathcal{F}_{t}$ square integrable continuous martingale additive functional.\\\\
In this section, we will consider the $L^{1}$ solutions of the BSDEs in Section 2 and use this result  to  solve the nonlinear elliptic partial differential equation with the mixed boundary condition.\\\\
Let $f:\Omega\times R^{+}\times R\rightarrow R$ be progressively measurable. Consider the following conditions:\\
\textbf{(I.1)} $(y-y')(f(t,y)-f(t,y'))\leq d(t)|y-y'|^{2}$, where $d(t)$ is a progressively measurable process;\\
\textbf{(I.2)}$E[\int_{0}^{\infty}e^{\int_{0}^{s}d(u)du}|f(s,0)|ds]<\infty$;\\
\textbf{(I.3)} $P_{x}-a.s.$, for any $t>0$, $y\rightarrow f(t,y)$ is continuous;\\
\textbf{(I.4)} $\forall r>0,\quad T>0$, $\displaystyle{\psi_{r}(t):=\sup_{|y|\leq r}|f(t,y)-f(t,0)|}\in L^{1}([0,T]\times \Omega, dt\times dP_{x}).$\\\\
The following lemma is deduced from Corollary 2.3 in $\cite{BDHPS}$.
\begin{lemma}
\label{lemma L1 estimate}
Suppose a pair of progressively measurable processes $(Y,Z)$ with values in $R\times R^{d}$ such that
$t\rightarrow Z_{t}$ belongs to $L^{2}([0,T])$ and $t\rightarrow f(t,Y_{t})$ belongs to $L^{1}([0,T])$,  $P_{x}-a.s.$.\\
If
\begin{eqnarray}
\label{BSDE}
Y_{t}=\xi+\int_{t}^{T}f(r,Y_{r})dr-\int_{t}^{T}<Z_{r},dM_{r}>,
\end{eqnarray}
then the following inequality holds, for $0\leq t< u\leq T$,
\begin{eqnarray}
|Y_{t}|\leq |Y_{u}|+\int_{t}^{u}\hat{Y}_{s}f(s,Y_{s})ds-\int_{t}^{u}\hat{Y}_{s}\langle Z_{r},dM_{r}\rangle.\nonumber
\end{eqnarray}
where $\hat{y}=\frac{y}{|y|}I_{\{y\neq 0\}}$.
\end{lemma}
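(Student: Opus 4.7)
The plan is to prove the inequality by a Tanaka-type argument: regularise $y\mapsto |y|$ by a $C^{2}$ convex approximation, apply It\^o's formula, use convexity to discard the resulting second-order term, and then pass to the limit. This mimics the argument behind Corollary 2.3 in $\cite{BDHPS}$, but with the generic martingale $M$ of the reflecting diffusion $X$ in place of Brownian motion.

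Concretely, I would fix a family $\{\phi_{\varepsilon}\}_{\varepsilon>0}\subset C^{2}(R)$ of convex approximations of the absolute value, for instance $\phi_{\varepsilon}(y)=\sqrt{y^{2}+\varepsilon^{2}}-\varepsilon$, chosen so that $\phi_{\varepsilon}(y)\to|y|$ and $\phi_{\varepsilon}'(y)\to\hat y$ pointwise as $\varepsilon\downarrow 0$, with $|\phi_{\varepsilon}'|\leq 1$ and $\phi_{\varepsilon}''\geq 0$. Applying It\^o's formula to $\phi_{\varepsilon}(Y_{s})$ in the BSDE $(\ref{BSDE})$, and using that the quadratic variation of $\int_{0}^{\cdot}\langle Z_{r},dM_{r}\rangle$ equals $\int_{0}^{\cdot}\langle A(X_{r})Z_{r},Z_{r}\rangle dr$, yields
\begin{eqnarray*}
\phi_{\varepsilon}(Y_{t}) &=& \phi_{\varepsilon}(Y_{u})+\int_{t}^{u}\phi_{\varepsilon}'(Y_{s})f(s,Y_{s})ds-\int_{t}^{u}\phi_{\varepsilon}'(Y_{s})\langle Z_{s},dM_{s}\rangle\\
&&-\frac{1}{2}\int_{t}^{u}\phi_{\varepsilon}''(Y_{s})\langle A(X_{s})Z_{s},Z_{s}\rangle ds.
\end{eqnarray*}
Since $\phi_{\varepsilon}''\geq 0$ and $A$ is pointwise positive definite by $(\ref{elliptic constant})$, the last integral is non-negative, so dropping it preserves the $\leq$ direction of the desired bound.

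To conclude I would let $\varepsilon\downarrow 0$. Pointwise, $\phi_{\varepsilon}(Y_{t})\to |Y_{t}|$ and $\phi_{\varepsilon}(Y_{u})\to |Y_{u}|$. The Lebesgue integral converges to $\int_{t}^{u}\hat Y_{s}f(s,Y_{s})ds$ by dominated convergence, since $|\phi_{\varepsilon}'|\leq 1$ and $f(\cdot,Y_{\cdot})\in L^{1}([0,T])$ by hypothesis. For the stochastic integral, the same uniform bound $|\phi_{\varepsilon}'(Y_{s})|\leq 1$, together with $\int_{t}^{u}\langle A(X_{s})Z_{s},Z_{s}\rangle ds<\infty$ (which follows from $Z\in L^{2}([0,T])$ and the upper ellipticity bound in $(\ref{elliptic constant})$), lets me invoke the dominated convergence theorem for stochastic integrals, giving convergence in probability and hence $P_{x}$-a.s.\ convergence along a subsequence. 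The main subtlety is the behaviour on the set $\{Y_{s}=0\}$, where $|\cdot|$ fails to be differentiable; the chosen $\phi_{\varepsilon}$ satisfy $\phi_{\varepsilon}'(0)=0$, consistent with the convention $\hat{0}=0$, so no Tanaka-type local-time correction survives in the limit. This is precisely why the conclusion is a one-sided inequality, corresponding to the non-negative second-order term that was discarded in the It\^o identity.
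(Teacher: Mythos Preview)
Your proposal is correct and matches the paper's treatment: the paper does not give its own proof of this lemma but simply states that it ``is deduced from Corollary 2.3 in \cite{BDHPS}'', and the Tanaka-type argument you outline (convex $C^2$ regularisation of $|\cdot|$, It\^o's formula, discarding the non-negative quadratic variation term, dominated convergence in both the Lebesgue and stochastic integrals) is exactly the method behind that result, here carried out with the martingale $M$ of the reflecting diffusion in place of Brownian motion.
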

The following lemma can be proved by modifying the proof of Proposition 6.4 in $\cite{BDHPS}$.
\begin{lemma}
\label{L1 BSDE with finite terminal time}
Assume that conditions (I.1)-(I.4) with $d(t)\equiv 0$. Then there exists a unique solution $(Y,Z)$ of the BSDE
\begin{eqnarray}
Y_{t}=\int_{t}^{T}f(r,Y_{r})dr-\int_{t}^{T}\langle Z_{r},dM_{r}\rangle, \quad for \quad t\leq T.
\end{eqnarray}
Moreover, for each $\beta\in(0,1)$, $E[\sup_{t\leq T}|Y_{t}|^{\beta}]+E[(\int_{0}^{T}|Z_{r}|^{2}dr)^{\frac{\beta}{2}}]<\infty$.
\end{lemma}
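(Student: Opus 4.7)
The plan is to proceed by truncation and $L^{1}$-approximation, adapting the strategy of \cite{BDHPS}. The overall scheme has three parts: (i) build $L^{2}$-solvable approximating BSDEs, (ii) extract an $L^{1}$-Cauchy sequence using Lemma 6.1, (iii) pass to the limit via the $\beta$-moment inequality to recover the required integrability for $Z$.

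\textbf{Step 1 (Approximation).} For each $n$, set $f_{n}(t,y)=f(t,y)-f(t,0)+q_{n}(f(t,0))$, where $q_{n}(x)=xn/(|x|\vee n)$ is the projection onto the ball of radius $n$ in $\mathbb{R}$. Then $f_{n}$ inherits the monotonicity (I.1) with $d\equiv 0$ and the continuity (I.3), while $|f_{n}(t,0)|\leq n$. Hence $f_{n}(\cdot,0)\in L^{2}([0,T]\times\Omega)$, and standard monotone BSDE theory (which in this setting can be obtained by an obvious finite-horizon adaptation of Theorem~3.1 in \cite{Z}, already used in the proof of Lemma 2.1) yields a unique pair $(Y^{n},Z^{n})$ with $Y^{n}\in\mathcal{S}^{2}$, $Z^{n}\in L^{2}$ solving the truncated equation.

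\textbf{Step 2 ($L^{1}$-estimate and Cauchy property).} Apply Lemma 6.1 to $Y^{n}$ and take expectations: using $\widehat{Y}^{n}_{s}(f_{n}(s,Y^{n}_{s})-f_{n}(s,0))\leq 0$ by (I.1), one gets
\begin{equation*}
E[|Y^{n}_{t}|]\leq E\Big[\int_{0}^{T}|f_{n}(s,0)|\,ds\Big]\leq E\Big[\int_{0}^{T}|f(s,0)|\,ds\Big],
\end{equation*}
uniformly in $n$ and $t$. Apply Lemma 6.1 again to $Y^{n}-Y^{m}$, whose BSDE has generator $f_{n}(s,Y^{n}_{s})-f_{m}(s,Y^{m}_{s})$. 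Split
\begin{equation*}
f_{n}(s,Y^{n}_{s})-f_{m}(s,Y^{m}_{s})=[f_{n}(s,Y^{n}_{s})-f_{n}(s,Y^{m}_{s})]+[f_{n}(s,Y^{m}_{s})-f_{m}(s,Y^{m}_{s})].
\end{equation*}
After multiplying by $\widehat{(Y^{n}-Y^{m})}_{s}$ the first bracket is $\leq 0$ by (I.1), while the second is bounded by $|q_{n}(f(s,0))-q_{m}(f(s,0))|$, which converges to $0$ in $L^{1}(dt\otimes dP_{x})$ by dominated convergence using (I.2). Taking expectations and then supremum (the local martingale is handled by a localization / BDG argument as in \cite{BDHPS}), $(Y^{n})$ is Cauchy in $\mathcal{S}^{1}$ with a limit $Y$.

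\textbf{Step 3 ($Z$-estimate via $\beta$-moments and passage to the limit).} To control $Z^{n}-Z^{m}$, apply It\^o's formula to $(|Y^{n}_{t}-Y^{m}_{t}|^{2}+\varepsilon)^{\beta/2}$ and let $\varepsilon\downarrow 0$ (this is the key device of \cite{BDHPS} Proposition~3.2). The concavity of $x\mapsto x^{\beta/2}$ for $\beta\in(0,1)$ produces a \emph{negative} quadratic variation term proportional to $|Z^{n}-Z^{m}|^{2}$, which, after the monotonicity of $f_{n}$ in $y$ absorbs the drift, yields
\begin{equation*}
E\Big[\Big(\int_{0}^{T}|Z^{n}_{r}-Z^{m}_{r}|^{2}dr\Big)^{\beta/2}\Big]\leq C\Big\{E[\sup_{t\leq T}|Y^{n}_{t}-Y^{m}_{t}|^{\beta}]+E\Big[\int_{0}^{T}|q_{n}(f(s,0))-q_{m}(f(s,0))|\,ds\Big]^{\beta}\Big\},
\end{equation*}
so $Z^{n}\to Z$ in the corresponding Banach space. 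Combining with the local boundedness hypothesis (I.4) and continuity (I.3), $f_{n}(s,Y^{n}_{s})\to f(s,Y_{s})$ in $L^{1}(dt\otimes dP_{x})$ after a localization argument, and $(Y,Z)$ solves the desired BSDE with the stated moment bounds. Uniqueness follows by applying Lemma 6.1 to the difference of two solutions: the second bracket in the generator decomposition is now absent, so $E[|Y_{t}-Y'_{t}|]=0$, and then $Z=Z'$ by the martingale representation.

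\textbf{Main obstacle.} The delicate point is Step 3: without $L^{2}$-integrability of $Z^{n}$ we cannot invoke standard It\^o manipulations, and the generator's $L^{1}$-data forces us to work with the fractional power $|\cdot|^{\beta}$, $\beta<1$. The care required lies in (a) handling the singularity of $y\mapsto |y|^{\beta}$ at the origin via the $(|y|^{2}+\varepsilon)^{\beta/2}$ regularization, (b) making sure the negative quadratic variation term survives the passage to the limit to dominate the $|Z^{n}-Z^{m}|^{2}$ contribution, and (c) justifying the convergence of the nonlinear term $\int_{t}^{T}f_{n}(s,Y^{n}_{s})ds$ purely with (I.3), (I.4) and the $L^{1}$-bounds, since uniform integrability must replace the unavailable $L^{2}$-domination.
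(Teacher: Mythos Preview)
Your proposal is correct and follows essentially the same route as the paper, which does not supply a proof but simply states that the lemma ``can be proved by modifying the proof of Proposition~6.4 in \cite{BDHPS}.'' Your three-step scheme---truncate $f(\cdot,0)$ to get $L^{2}$-solvable approximations, use Lemma~6.1 to show $(Y^{n})$ is Cauchy in $\mathcal{S}^{\beta}$, then extract the $Z$-convergence via the $\beta$-moment/BDG argument---is exactly the \cite{BDHPS} strategy the paper points to. One small remark: in Step~1 the $L^{2}$ existence you need is not quite Theorem~3.1 of \cite{Z} (which uses the linear-growth condition (A.3) rather than the local bound (I.4)); the right reference is the $L^{2}$ existence result in \cite{BDHPS} itself (their Section~4), which is built precisely to accommodate the $\psi_{r}$ hypothesis. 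This is a citation issue, not a gap in the argument.
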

\vspace{4mm}
Suppose $\beta\in(0,1)$. \\
$\mathcal{S}^{\beta}$ denotes the set of real-valued, adapted and continuous process $\{Y_{t}\}_{t\geq 0}$ such that
$$
\|Y\|^{\beta}:=E[\sup_{t>0}|Y_{t}|^{\beta}]<\infty.
$$
It is known that $\|\cdot\|^{\beta}$ deduces a complete metric on $\mathcal{S}^{\beta}$. \\
$M^{\beta}$ denotes the set of $R^{d}$-valued predictable processes $\{Z_{t}\}$ such that
$$
\|Z\|_{M^{\beta}}:=E[(\int_{0}^{\infty}|Z_{t}|^{2}dt)^\frac{\beta}{2}]<\infty.
$$
$M^{\beta}$ is also a complete metric space with the distance deduced by $\|\cdot\|_{M^{\beta}}$.
\begin{lemma}
\label{lemmad(t)is0infinite}
Under the same assumption as the Lemma $\ref{L1 BSDE with finite terminal time}$,  there exists a unique solution $(Y,Z)$ of the BSDE
\begin{eqnarray}
\label{d(t)is0infinite}
&&Y_{t}=Y_{T}+\int_{t}^{T}f(r,Y_{r})dr-\int_{t}^{T}\langle Z_{r},dM_{r}\rangle,\quad any\quad t\leq T;\nonumber\\
&&\lim_{t\rightarrow\infty}Y_{t}=0,\quad P-a.s..
\end{eqnarray}
\end{lemma}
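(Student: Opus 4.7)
The plan is to construct $(Y,Z)$ as a limit of finite-horizon approximations furnished by Lemma 6.2, propagating $L^{1}$ control through the monotonicity condition (I.1) via the Tanaka-type inequality of Lemma 6.1. For each integer $n\ge 1$, Lemma 6.2 provides a unique pair $(Y^{n},Z^{n})$ solving
\begin{eqnarray*}
Y^{n}_{t}=\int_{t}^{n}f(r,Y^{n}_{r})\,dr-\int_{t}^{n}\langle Z^{n}_{r},dM_{r}\rangle,\quad 0\le t\le n,
\end{eqnarray*}
which I would extend by $Y^{n}_{t}=0$, $Z^{n}_{t}=0$ for $t>n$, so that $Y^{n}_{n}=0$.

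First, I would establish a uniform a priori bound. Applying Lemma 6.1 to $Y^{n}$ on $[t,n]$ and using that (I.1) with $d\equiv 0$ yields $\hat{y}(f(s,y)-f(s,0))\le 0$, one obtains
\begin{eqnarray*}
|Y^{n}_{t}|\le \int_{t}^{n}|f(s,0)|\,ds-\int_{t}^{n}\hat{Y}^{n}_{s}\langle Z^{n}_{r},dM_{r}\rangle,
\end{eqnarray*}
and, after conditioning, $|Y^{n}_{t}|\le E\bigl[\int_{t}^{\infty}|f(s,0)|\,ds\bigm|\mathcal{F}_{t}\bigr]$. By (I.2) this bound is finite, uniform in $n$, and vanishes a.s.\ as $t\to\infty$. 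Next, for $m<n$ set $\bar{Y}=Y^{n}-Y^{m}$ and $\bar{Z}=Z^{n}-Z^{m}$; on $[t,m]$ the difference satisfies a BSDE with terminal value $Y^{n}_{m}$ and driver $f(r,Y^{n}_{r})-f(r,Y^{m}_{r})$, so a second application of Lemma 6.1 and (I.1) gives
\begin{eqnarray*}
|\bar{Y}_{t}|\le |Y^{n}_{m}|-\int_{t}^{m}\hat{\bar{Y}}_{s}\langle\bar{Z}_{r},dM_{r}\rangle,
\end{eqnarray*}
hence $E[|\bar{Y}_{t}|]\le E[|Y^{n}_{m}|]\le E[\int_{m}^{\infty}|f(s,0)|\,ds]\to 0$ as $m\to\infty$.

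To strengthen this into Cauchy convergence in $\mathcal{S}^{\beta}\times M^{\beta}$ for every $\beta\in(0,1)$, I would view $\bar{Y}$ as the solution of a BSDE with terminal $Y^{n}_{m}$ and driver $\tilde f(r,y):=f(r,y+Y^{m}_{r})-f(r,Y^{m}_{r})$, which satisfies $\tilde f(r,0)=0$ and retains the monotonicity, and then apply the $L^{\beta}$-type a priori estimate of Briand--Delyon--Hu--Pardoux--Stoica (the same machinery that underlies Lemma 6.2) to obtain
\begin{eqnarray*}
\|Y^{n}-Y^{m}\|_{\mathcal{S}^{\beta}}^{\beta}+\|Z^{n}-Z^{m}\|_{M^{\beta}}^{\beta}\le C_{\beta}\,E[|Y^{n}_{m}|^{\beta}]\longrightarrow 0.
\end{eqnarray*}
Let $(Y,Z)$ denote the resulting limit. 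Using (I.3)--(I.4) and dominated convergence on each compact $[0,T]$ (for $n\ge T$), I pass to the limit in the BSDE on $[t,T]$ to recover the equation for $(Y,Z)$; letting $n\to\infty$ in the pointwise bound already displayed and invoking (I.2) yields $\lim_{t\to\infty}Y_{t}=0$ a.s.

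For uniqueness, given two solutions $(Y^{i},Z^{i})$, Lemma 6.1 applied to $\bar{Y}=Y^{1}-Y^{2}$ together with (I.1) produces $|\bar{Y}_{t}|\le |\bar{Y}_{T}|-\int_{t}^{T}\hat{\bar{Y}}_{s}\langle\bar{Z}_{r},dM_{r}\rangle$; localizing the stochastic integral by stopping times $\tau_{k}\uparrow\infty$, taking expectations, applying Fatou's lemma as $k\to\infty$, and finally letting $T\to\infty$ (using $\bar{Y}_{T}\to 0$ a.s.\ together with the a priori bound to justify integrability) forces $\bar{Y}\equiv 0$, whence $\bar{Z}\equiv 0$ from the quadratic variation. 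The main obstacle is the passage from the clean $L^{1}$ Cauchy estimate to Cauchy convergence in $\mathcal{S}^{\beta}$ and $M^{\beta}$: Doob's inequality is unavailable for $\beta<1$, so the BDHPS $L^{\beta}$-machinery must be adapted to the present monotone-driver framework, and one must simultaneously ensure that the a.s.\ terminal limit of $Y^{n}_{t}$ survives the limiting procedure rather than being weakened to a mere $L^{1}$-statement.
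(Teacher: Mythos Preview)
Your approach is essentially the paper's: build $(Y^n, Z^n)$ via Lemma~6.2, show the sequence is Cauchy, pass to the limit, and handle uniqueness through Lemma~6.1. The one substantive divergence concerns what you call the ``main obstacle.'' You flag that Doob's $L^p$ inequality fails for $\beta < 1$ and propose to circumvent this by invoking the BDHPS a priori machinery on the difference BSDE. The paper instead observes that once one has the pointwise bound $|Y^{n+i}_t - Y^n_t| \le M^n_t := E\bigl[\int_n^{n+i} |f(s,0)|\,ds \,\big|\, \mathcal{F}_t\bigr]$, the weak-type maximal inequality for nonnegative martingales gives, for every $\beta \in (0,1)$,
\[
E\Bigl[\sup_t (M^n_t)^\beta\Bigr] \le \frac{1}{1-\beta}\Bigl(E\Bigl[\int_n^{n+i}|f(s,0)|\,ds\Bigr]\Bigr)^\beta,
\]
which is already the $\mathcal{S}^\beta$-Cauchy estimate. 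So a Doob-type inequality \emph{is} available in this regime and is precisely what the paper uses; your detour through the BDHPS black box is valid but unnecessary. For the $M^\beta$-convergence of $Z^n$ the paper carries out an explicit Ito computation on $|Y^{n+i}_t - Y^n_t|^2$, raises to the power $\beta/2$, and applies the BDG inequality to bound $E\bigl[(\int_0^\infty |Z^{n+i}_s - Z^n_s|^2\,ds)^{\beta/2}\bigr]$ by a constant times $E[\sup_t |Y^{n+i}_t - Y^n_t|^\beta]$ plus the same tail integral, reaching the same conclusion by hand.
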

\begin{proof}
Existence:\\
By the Lemma 6.2 above, there exists  $(Y^{n},Z^{n})$ such that, for $0\leq t\leq n$,
$$
Y^{n}_{t}=\int_{t}^{n}f(r,Y^{n}_{r})dr-\int_{t}^{n}\langle Z^{n}_{r},dM_{r}\rangle,
$$
and $Y^{n}_{t}=Z^{n}_{t}=0$, for $t\geq n$.\\
Fix $t>0$ and $t<n<n+i$, then
\begin{eqnarray}
Y^{n+i}_{t}-Y^{n}_{t}=\int_{t}^{n+i}(f(r,Y^{n+i}_{r})-f(r,Y^{n}_{r}))dr-\int_{t}^{n+i}\langle(Z^{n+i}_{r}-Z^{n}_{r}),dM_{r}\rangle
+\int_{n}^{n+i}f(r,0)dr\nonumber
\end{eqnarray}
Set $F^{n}(r,y)=f(r,y+Y^{n}_{r})-f(r,Y^{n}_{r})+f(r,0)I_{\{r>n\}}$, $y^{n}_{t}=Y^{n+i}_{t}-Y^{n}_{t}$ and $z^{n}_{t}=Z^{n+i}_{t}-Z^{n}_{t}$. Then $(y^{n}_{t},z^{n}_{t})$ is the solution of the following BSDE:
\begin{eqnarray}
\label{d(t)is0infinite: difference equation}
y^{n}_{t}=\int_{t}^{n+i}F(r,y^{n}_{r})dr-\int_{t}^{n+i}\langle z^{n}_{r},dM_{r}\rangle.
\end{eqnarray}
So that by the condition (I.1) with $d(t)\equiv 0$, it follows from Lemma $\ref{lemma L1 estimate}$ that
\begin{eqnarray}
|y^{n}_{t}|&\leq&\int_{t}^{n+i}\langle\hat{y}^{n}_{r},F^{n}(r,y^{n}_{r})\rangle dr-\int_{t}^{n+i}\langle \hat{y}^{n}_{r},z^{n}_{r}dM_{r}\rangle\nonumber\\
&\leq& \int_{t}^{n+i}\frac{I_{\{y^{n}_{r}\neq 0\}}}{|y^{n}_{r}|}\langle y^{n}_{r},f(r,y^{n}_{r}+Y^{n}_{r})-f(r,Y^{n}_{r})\rangle dr+\int_{n}^{n+i}|f(s,0)|ds\nonumber\\
&&-
\int_{t}^{n+i}\langle \hat{y}^{n}_{r},
z^{n}_{r}dM_{r}\rangle\nonumber\\
&\leq&\int_{n}^{n+i}|f(s,0)|ds-\int_{t}^{n+i}\langle \hat{y}^{n}_{r},z^{n}_{r}dM_{r}\rangle.
\end{eqnarray}
Taking conditional expectation on both side of the inequality, we got
$$
|y^{n}_{t}|\leq E[\int_{n}^{n+i}|f(s,0)|ds|\mathcal{F}_{t}]:=M^{n}_{t},
$$
where $M^{n}_{t}$ is a martingale. Then by Doob's inequality and condition (I.2), it follows that, for $\beta\in (0,1)$,
\begin{eqnarray}
E[\sup_{t}|y^{n}_{t}|^{\beta}]\leq E[\sup_{t}(M^{n}_{t})^{\beta}]&\leq& \frac{1}{1-\beta}E[\int_{n}^{n+i}|f(s,0)|ds]^{\beta}\nonumber\\
&\rightarrow& 0,\quad as \quad n\rightarrow\infty.
\end{eqnarray}
Therefore, $\{Y^{n}\}$ is a Cauchy sequence under the norm $\|\cdot\|^{\beta}_{\infty}$. So that there is a process $Y$ such that $E[\sup_{t}|Y_{t}-Y^{n}_{t}|^{\beta}]\rightarrow 0$.\\
This also implies that $Y_{t}\rightarrow 0$, as $t\rightarrow \infty$, $P_{x}-a.s.$.\\
Moreover, by the equation $(\ref{d(t)is0infinite: difference equation})$, Ito's formula and the condition (I.1), it follows that
\begin{eqnarray}
&&|y^{n}_{t}|^{2}+\int_{t}^{n+i}\langle A(X(r))z^{n}_{r},z^{n}_{r}\rangle dr\nonumber\\
&=&2\int_{t}^{n+i}\langle y^{n}_{r},F^{n}(r,y^{n}_{r})\rangle dr-2\int_{t}^{n+i}\langle y^{n}_{r},z^{n}_{r}dM_{r}\rangle \nonumber\\
&\leq&2\int_{n}^{n+i}\langle y^{n}_{r},f^{n}(r,0)\rangle dr+2|\int_{t}^{n+i}\langle y^{n}_{r},z^{n}_{r}dM_{r}\rangle |\nonumber\\
&\leq&\sup_{r}|y^{n}_{r}|^{2}+(\int_{n}^{n+i}|f(r,0)|dr)^{2}+2|\int_{t}^{\infty}\langle y^{n}_{r},z^{n}_{r}dM_{r}\rangle|,\nonumber
\end{eqnarray}
and thus that
\begin{eqnarray}
(\int_{t}^{n+i}|z^{n}_{r}|^{2}dr)^{\frac{\beta}{2}}\leq c_{1}[\sup_{r}|y^{n}_{r}|^{\beta}+(\int_{n}^{n+i}|f(r,0)|dr)^{\beta}+|\int_{t}^{n+i}<y^{n}_{r},z^{n}_{r}dM_{r}>|^{\frac{\beta}{2}}].\nonumber
\end{eqnarray}
Taking expectation on both sides of the inequality and applying the BDG inequality, we obtain
\begin{eqnarray}
&&E[(\int_{t}^{n+i}|z^{n}_{r}|^{2}dr)^{\frac{\beta}{2}}]\nonumber\\
&\leq&
c_{1}(E[\sup_{r}|y^{n}_{r}|^{\beta}]+E[(\int_{t}^{n+i}|f(r,0)|dr)^{\beta}])
+c_{2}E[(\int_{t}^{n+i}|y^{n}_{r}|^{2}|z^{n}_{r}|^{2}dr)^{\frac{\beta}{4}}]\nonumber\\
&\leq&c_{1}(E[\sup_{r}|y^{n}_{r}|^{\beta}]+E[(\int_{n}^{n+i}|f(r,0)|dr)^{\beta}])+
c_{2}E[(\sup_{r}|y^{n}_{r}|^{\frac{\beta}{2}}\int_{t}^{n+i}|z^{n}_{r}|^{2}dr)^{\frac{\beta}{4}}]\nonumber\\
&\leq&(c_{1}+\frac{c_{2}}{2})(E[\sup_{r}|y^{n}_{r}|^{\beta}]+E[(\int_{n}^{n+i}|f(r,0)|dr)^{\beta}])
+\frac{1}{2}E[(\int_{t}^{n+i}|z^{n}_{r}|^{2}dr)^{\frac{\beta}{2}}].\nonumber
\end{eqnarray}
Therefore, we know that there is a constant $C>0$, such that
\begin{eqnarray}
E[(\int_{0}^{\infty}|z^{n}_{s}|^{2}ds)^{\frac{\beta}{2}}]&\leq& CE[\sup_{t}|y^{n}_{t}|^{\beta}+(\int_{n}^{n+i}|f(s,0)|ds)^{\beta}]\nonumber\\
&\leq& CE[\sup_{t}|y^{n}_{t}|^{\beta}]+CE[\int_{n}^{n+i}|f(s,0)|ds]^{\beta}\nonumber\\
&\rightarrow& 0\quad as\quad n\rightarrow\infty.\nonumber
\end{eqnarray}
So that $\{Z^{n}_{t}\}$ is a Cauchy sequence in $M^{\beta}$. Let $Z$ denote the limit of $\{Z^{n}\}$. \\
At last, by the condition (I.3), we find that
\begin{eqnarray}
\label{d(t)is0infinite:function f convergence}
\int_{0}^{T}f(t,Y^{n}_{t})dt\rightarrow\int_{0}^{T}f(t,Y_{t})dt,\quad P_{x}-a.s..
\end{eqnarray}
Therefore, (Y,Z) is the solution satisfies the BSDE $(\ref{d(t)is0infinite})$.\\
Uniqueness:\\
Consider $(Y,Z)$ and $(Y',Z')$ are two solutions to $(\ref{d(t)is0infinite})$. Then by the same method as in the proof of Lemma 2.1, we can show that,
\begin{eqnarray}
\forall t>0,\quad |Y_{t}-Y'_{t}|=0,\quad P-a.s..\nonumber\quad \quad \quad\quad \quad \quad \square\nonumber
\end{eqnarray}
\end{proof}
\textbf{(I.5)}  The process $d(t)$ is a progressively measurable process satisfying
$$
d(\cdot)\in L^{1}[[0,T]\times\Omega,dt\otimes P], \quad for \quad any \quad T>0.
$$
\begin{theorem}
\label{thoerem infinite horizon}
Assume the conditions (I.1)-(I.4). Then there exists a unique process $(Y,Z)$ such that,
\begin{eqnarray}
\label{infinite horizon BSDE}
&&Y_{t}=Y_{T}+\int_{t}^{T}f(r,Y_{r})dr-\int_{t}^{T}<Z_{r},dM_{r}>,\quad for \quad any \quad t<T;\nonumber\\
&&\lim_{t\rightarrow\infty}e^{\int_{0}^{t}d(u)du}Y_{t}=0,\quad P-a.s.
\end{eqnarray}
\end{theorem}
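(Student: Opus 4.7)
\proof
My plan is to reduce Theorem 6.1 to Lemma \ref{lemmad(t)is0infinite} via the standard weight transformation
$$\tilde{Y}_t := e^{\int_0^t d(u)\,du} Y_t, \qquad \tilde{Z}_t := e^{\int_0^t d(u)\,du} Z_t.$$
Applying the chain rule to the BSDE $(\ref{infinite horizon BSDE})$, the pair $(\tilde{Y},\tilde{Z})$ should satisfy
$$\tilde{Y}_t = \tilde{Y}_T + \int_t^T \tilde{f}(r, \tilde{Y}_r)\,dr - \int_t^T \langle \tilde{Z}_r, dM_r\rangle,$$
where the new driver is defined by
$$\tilde{f}(t, \tilde{y}) := e^{\int_0^t d(u)\,du}\Bigl(f\bigl(t, e^{-\int_0^t d(u)\,du}\tilde{y}\bigr) - d(t) e^{-\int_0^t d(u)\,du}\tilde{y}\Bigr).$$
The terminal condition $\lim_{t\to\infty} e^{\int_0^t d(u)du} Y_t = 0$ becomes $\lim_{t\to\infty} \tilde{Y}_t = 0$, which matches the form handled by Lemma \ref{lemmad(t)is0infinite}.

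The key point is that $\tilde{f}$ satisfies the monotonicity hypothesis of Lemma \ref{lemmad(t)is0infinite} with constant $0$ in place of $d(t)$. Indeed, writing $y_i = e^{-\int_0^t d(u)du}\tilde{y}_i$, the difference factors as
$$(\tilde{y}_1 - \tilde{y}_2)\bigl(\tilde{f}(t,\tilde{y}_1) - \tilde{f}(t,\tilde{y}_2)\bigr) = e^{2\int_0^t d(u)du}\Bigl[(y_1 - y_2)(f(t,y_1) - f(t,y_2)) - d(t)|y_1 - y_2|^2\Bigr],$$
which is $\le 0$ by (I.1). Conditions (I.2) and (I.3) for $\tilde{f}$ are immediate since $\tilde{f}(t,0) = e^{\int_0^t d(u)du} f(t,0)$ is in $L^1([0,\infty)\times\Omega)$ by the original (I.2), and continuity in $y$ is preserved. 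For (I.4), using (I.5) the weight $e^{\int_0^t d(u)du}$ is a.s.\ bounded on $[0,T]$ by some random constant $C_T$, so $|\tilde{y}|\le r$ forces $|y|\le C_T r$; hence
$$\sup_{|\tilde{y}|\le r} |\tilde{f}(t,\tilde{y}) - \tilde{f}(t,0)| \le C_T\bigl(\psi_{C_T r}(t) + |d(t)|\, C_T r\bigr),$$
which lies in $L^1([0,T]\times\Omega)$ thanks to (I.4) applied to the original $f$ together with (I.5).

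Once the hypotheses of Lemma \ref{lemmad(t)is0infinite} are verified for $\tilde{f}$, that lemma yields a unique pair $(\tilde{Y},\tilde{Z})$ solving the transformed BSDE with $\lim_{t\to\infty}\tilde{Y}_t=0$ a.s., and transforming back via $Y_t = e^{-\int_0^t d(u)du}\tilde{Y}_t$, $Z_t = e^{-\int_0^t d(u)du}\tilde{Z}_t$ gives the solution of $(\ref{infinite horizon BSDE})$. Uniqueness transfers immediately: any two solutions of $(\ref{infinite horizon BSDE})$ produce, after multiplication by the weight, two solutions of the transformed BSDE, which must coincide by Lemma \ref{lemmad(t)is0infinite}. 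The main technical nuisance I expect is the verification of (I.4) for $\tilde{f}$, since $d(t)$ is only assumed to be integrable rather than bounded, so one must carry out the argument $\omega$-by-$\omega$ on $[0,T]$ and use that (I.5) furnishes an a.s.\ finite pathwise bound $C_T(\omega)$ to control the change of scale between $y$ and $\tilde{y}$. Everything else is a direct application of Itô's formula and the previous lemma. $\square$
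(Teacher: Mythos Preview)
Your approach is essentially identical to the paper's: both define the same transformed driver $\hat f(t,y)=e^{\int_0^t d(u)du}f(t,e^{-\int_0^t d(u)du}y)-d(t)y$, verify that it satisfies the hypotheses of Lemma~\ref{lemmad(t)is0infinite} (monotonicity with constant $0$, the $L^1$ bound on $\hat f(\cdot,0)$, and the growth condition), apply that lemma, and then undo the weight to recover $(Y,Z)$; uniqueness is deduced the same way. Your treatment of (I.4) via the pathwise bound $C_T(\omega)$ is in fact more careful than the paper's, which simply writes $\psi_r(t)+|d(t)|r$ without tracking the change of radius.
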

\begin{proof}\\
Existence:\\
Set $\hat{f}(t,y)=e^{\int_{0}^{t}d(u)du}f(t,e^{-\int_{0}^{t}d(u)du}y)-d(t)y$. Then \\
(1) $(y-y')(\hat{f}(t,y)-\hat{f}(t,y'))\leq 0$;\\
(2)$\hat{f}(t,0)=e^{\int_{0}^{t}d(u)du}f(t,0)$. So
$
E[\int_{0}^{\infty}|\hat{f}(s,0)|ds]=E[\int_{0}^{\infty}e^{\int_{0}^{t}d(u)du}|f(t,0)|ds]<\infty.\\
$
(3)$\displaystyle{\sup_{|y|\leq r}}|\hat{f}(t,y)-\hat{f}(t,0)|\leq \psi_{r}(t)+|d(t)|r$, where the process $\psi_{r}(t)+|d(t)|r\in L^{1}([0,T]\times \Omega,dt\otimes P)$, for $T>0$.\\
Therefore, $\hat{f}$ satisfies all the conditions of the Lemma $\ref{lemmad(t)is0infinite}$. So there exists a pair of processes $(\hat{Y},\hat{Z})$ satisfying the equation:
$$
\hat{Y}_{t}=\hat{Y}_{T}+\int_{t}^{T}\hat{f}(r,\hat{Y}_{r})dr-\int_{t}^{T}\langle \hat{Z}_{r},dM_{r}\rangle,
$$
and obviously $\displaystyle{\lim_{t\rightarrow\infty}\hat{Y}_{t}=0}$.\\
By the chain rule and the definition of the function $\hat{f}$, it follows that
\begin{eqnarray}
de^{-\int_{0}^{t}d(u)du}\hat{Y}_{t}=-f(t,e^{-\int_{0}^{t}d(u)du}\hat{Y}_{t})dt+\langle e^{-\int_{0}^{t}d(u)du}\hat{Z}_{t},dM_{t}\rangle.\nonumber
\end{eqnarray}
Set $Y_{t}=e^{-\int_{0}^{t}d(u)du}\hat{Y}_{t}$ and $Z_{t}=e^{-\int_{0}^{t}d(u)du}\hat{Z}_{t}$. Then the process $(Y,Z)$ is the solution to the equation $(\ref{infinite horizon BSDE})$.\\
Uniqueness:\\
The uniqueness of the solution to $(\ref{infinite horizon BSDE})$ follows from the uniqueness of the solution to equation $(\ref{d(t)is0infinite})$. $\square$
\end{proof}\\
\\

Let $G(x,y):R^{d}\times R\rightarrow R$ be a bounded Borel measurable function.  Consider the following conditions:\\
$\textbf{(H.1)}^{'}$ $(y_{1}-y_{2})(G(x,y_{1},z)-G(x,y_{2},z))\leq -h_{1}(x)|y_{1}-y_{2}|^{2}$, where $h_{1}\in L^{p}(D)$ for $p>\frac{d}{2}$.\\
$\textbf{(H.2)}^{'}$   $y\rightarrow G(x,y)$ is continuous.\\
\begin{theorem}
\label{theorem: operator G}
Assume the Conditions $(H.1)^{'}$ and $(H.2)^{'}$  and that
there is some point $x_{0}\in D$, such that
\begin{eqnarray}
\label{L1:condition of the theorem about operator G}
E_{x_{0}}[\int_{0}^{\infty}e^{\int_{0}^{s}q(X(u))du}dL_{s}]<\infty.
\end{eqnarray}
Then the semilinear Neumann boundary value problem
\begin{eqnarray}
\left\{\begin{array}{ll}
{L_{2}}u(x)=-G(x,u(x)),&\textrm{on $D$}\\
\frac{\partial u}{\partial{\gamma}}(x)=\phi(x) &\textrm{on $\partial D$ }
\end{array}\right.
\end{eqnarray}
has a unique continuous weak solution.
\end{theorem}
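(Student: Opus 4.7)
The plan is to mirror the strategy of Theorem \ref{semilinear equation L2} while substituting the $L^{1}$ BSDE theory of Theorem \ref{thoerem infinite horizon} for the $L^{2}$ theory of Theorem 2.1. Since $G$ depends only on $(x,y)$ and is bounded, the $L^{1}$ framework is the natural one here and should allow us to drop the strong exponential integrability conditions $(\ref{condition1 in main theorem})$--$(\ref{condition2 in main theorem})$ that were needed in Section 4. As before, the proof splits into three parts: produce a candidate $u_{0}(x)=Y_{x}(0)$ from a BSDE, solve the linear PDE with source $-G(\cdot,u_{0}(\cdot))$ via Theorem 3.1, and identify the two.

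For the BSDE step, I would first absorb the boundary term. Under $(\ref{L1:condition of the theorem about operator G})$, Lemma \ref{boundedness of gauge function} upgrades the hypothesis to $\sup_{x}E_{x}[\int_{0}^{\infty}e^{\int_{0}^{s}q(X(u))du}dL_{s}]<\infty$, so
$$p_{x}(t):=-E_{x}\Bigl[\int_{t}^{\infty}e^{\int_{0}^{s}q(X(u))du}\phi(X(s))\,dL_{s}\,\Big|\,\mathcal{F}_{t}\Bigr]$$
is well-defined and, exactly as in formula $(\ref{formula of px})$, satisfies $|p_{x}(t)|\le Me^{\int_{0}^{t}q(X(u))du}$ and $dp_{x}=e^{\int_{0}^{t}q(X(u))du}\phi(X(t))dL_{t}+\langle q_{x}(t),dM_{x}(t)\rangle$ with $q_{x}$ furnished by the martingale representation theorem. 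Setting $\tilde G(t,y):=e^{\int_{0}^{t}q(X(u))du}G(X(t),e^{-\int_{0}^{t}q(X(u))du}y)$, a direct check shows $\tilde G$ inherits the monotonicity $(y_{1}-y_{2})(\tilde G(t,y_{1})-\tilde G(t,y_{2}))\le-h_{1}(X(t))|y_{1}-y_{2}|^{2}$ and is continuous in $y$. Writing $\hat Y_{x}(t):=e^{\int_{0}^{t}q(X(u))du}Y_{x}(t)$ and subtracting $p_{x}$ reduces the problem to solving
$$dk_{x}(t)=-\bar G(t,k_{x}(t))\,dt+\langle l_{x}(t),dM_{x}(t)\rangle,\qquad \lim_{t\to\infty}e^{-\int_{0}^{t}h_{1}(X(u))du}k_{x}(t)=0,$$
with $\bar G(t,y):=\tilde G(t,p_{x}(t)+y)$. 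Theorem \ref{thoerem infinite horizon} with $d(t)=-h_{1}(X(t))$ provides such a pair $(k_{x},l_{x})$ once conditions (I.1)--(I.4) are checked for $\bar G$; (I.1), (I.3), (I.4) are immediate from $(H.1)'$, $(H.2)'$ and the boundedness of $G$, while (I.2), via the bound $|\bar G(t,0)|\le\|G\|_{\infty}e^{\int_{0}^{t}q(X(u))du}$, reduces to $E_{x}[\int_{0}^{\infty}e^{\int_{0}^{s}(q-h_{1})(X(u))du}\,ds]<\infty$.

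Setting $Y_{x}(t):=e^{-\int_{0}^{t}q(X(u))du}(p_{x}(t)+k_{x}(t))$ and $u_{0}(x):=Y_{x}(0)$, the Markov property yields $Y_{x}(t)=u_{0}(X(t))$, and an $L^{1}$ BSDE estimate along the lines of Corollary \ref{cor. y0 upper bounds} shows $u_{0}\in L^{\infty}(\bar D)$. Theorem 3.1 then produces a unique bounded continuous weak solution $u$ of $L_{2}u=-G(\cdot,u_{0}(\cdot))$ with Neumann data $\phi$. Applying Fukushima's decomposition to $u(X(t))$, subtracting the BSDE for $Y_{x}(t)$, and studying $v_{x}(t):=u(X(t))-Y_{x}(t)$, one obtains a driver-free linear equation for $e^{\int_{0}^{t}q(X(u))du}v_{x}(t)$ which, through the iterated conditional-expectation argument used at the end of the proof of Theorem \ref{semilinear equation L2} together with $\lim_{T\to\infty}E_{x}[e^{\int_{0}^{T}q(X(u))du}]=0$, forces $v_{x}\equiv 0$. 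Hence $u=u_{0}$, proving existence; uniqueness follows the same route, since any other weak solution $\tilde u$ produces, via Fukushima's decomposition and the transformation $e^{\int_{0}^{t}q(X(u))du}\tilde u(X(t))$, a second solution of the reduced BSDE that must coincide with the one already constructed by the uniqueness clause of Theorem \ref{thoerem infinite horizon}. The main obstacle I anticipate is the verification of (I.2): one must show that the semigroup decay $\sup_{x}E_{x}[e^{\int_{0}^{t}q(X(u))du}]\le Ke^{-\beta t}$ (an analogue of Lemma \ref{upper boundary of semigroup} transported from $X^{0}$ to $X$ via Girsanov) is strong enough to absorb the potentially growing factor $e^{-\int_{0}^{s}h_{1}(X(u))du}$ coming from the monotonicity rate; since $h_{1}\in L^{p}(D)$ carries no a priori sign, the combination of Khasminskii's lemma applied to $h_{1}$ with the exponential decay of the $q$-semigroup is what must do the work.
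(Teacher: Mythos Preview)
Your proposal is correct and follows essentially the same route as the paper: transform via $\tilde G$, build $(p_x,q_x)$ exactly as in Theorem~2.1, apply Theorem~\ref{thoerem infinite horizon} to $g(t,y)=\tilde G(t,p_x(t)+y)$ with $d(t)=-h_1(X(t))$, and then defer the identification $u=u_0$ and uniqueness to the argument already carried out in Theorem~\ref{semilinear equation L2}. On your anticipated obstacle (I.2), the paper is in fact terser than you are---it simply asserts that $(\ref{L1:condition of the theorem about operator G})$ together with Lemma~\ref{upper boundary of semigroup} yields $E_{x}[\int_{0}^{\infty}e^{\int_{0}^{s}(q-h_{1})(X(u))du}\,ds]<\infty$---so the Khasminskii/decay combination you sketch is exactly what is intended.
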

\begin{proof}\\
Step 1 \\
Set
$
\tilde{G}(X(t),y)=e^{\int_{0}^{t}q(X(u))dt}G(x,e^{-\int_{0}^{t}q(X(u))dt}y).
$
Then there exists a unique solution $(\hat{Y}_{x},\hat{Z}_{x})$ to the following BSDE:\\
for any $T>0$ and $0<t<T$,
\begin{eqnarray}
\hat{Y}_{x}(t)&=&\hat{Y}_{x}(T)+\int_{t}^{T}\tilde{G}(X(t),\hat{Y}_{x}(s))ds-\int_{t}^{T}e^{\int_{0}^{s}{q}(X(u))dt}\phi(X(s))dL_{s}\nonumber\\
&&-\int_{t}^{T}\langle \hat{Z}_{x}(s),dM_{x}(s)\rangle\nonumber
\end{eqnarray}
and
\begin{eqnarray}
\lim_{t\rightarrow \infty}e^{-\int_{0}^{t}h_{1}(X(u))du}\hat{Y}_{t}=0 \quad P_{x}-a.s.\nonumber
\end{eqnarray}
The uniqueness follows from the uniqueness proved in  Theorem 6.1.
Only the existence of solution $(\hat{Y}_{x},\hat{Z}_{x})$ needs to be proved:\\
(a) Similarly as the proof of Theorem 2.1, we can show that there exists $(p_{x}(t),q_{x}(t))$ such that
\begin{eqnarray}
&&dp_{x}(t)=e^{\int_{0}^{t}{q}(X(u))du}\phi(X(t))dL_{t}+<q_{x}(t),dM_{x}(t)>,\nonumber\\
&&e^{-\int_{0}^{t}h_{1}(X(u))du}p_{x}(t)\rightarrow0, \quad as \quad t\rightarrow \infty, \quad P_{x}-a.s..
\end{eqnarray}
(b) Set $g(x,y)=\tilde{G}(x,y+p_{x}(t))$. Then it follows that\\
$$
\quad (y-y')(g(x,y)-g(x,y'))\leq -h_{1}(x)|y-y'|^{2}.
$$
The condition $(\ref{L1:condition of the theorem about operator G})$ and Lemma 3.3 imply, for $x\in D$,
$$
E_{x}[\int_{0}^{\infty}e^{\int_{0}^{s}(-h_{1}+q)(X(u))du}ds]<\infty.
$$
Furthermore, as the function $G$ is bounded, we see that condition $(I.2)$ is satisfied:
\begin{eqnarray}
&&E_{x}[\int_{0}^{\infty}e^{-\int_{0}^{s}h_{1}(X(u))du}|g(X(s),0)|ds]\nonumber\\
&=&E_{x}[\int_{0}^{\infty}e^{-\int_{0}^{s}h_{1}(X(u))du}|\tilde{G}(X(s),p_{x}(s))|ds]\nonumber\\
&=&E_{x}[\int_{0}^{\infty}e^{\int_{0}^{s}(-h_{1}+q)(X(u))du}|G(X(s),e^{-\int_{0}^{s}q(X(u))du}p_{x}(s))|ds]\nonumber\\
&\leq&\|G\|_{\infty}E_{x}[\int_{0}^{\infty}e^{\int_{0}^{t}(-h_{1}+q)(X(u))du}dt]\nonumber\\
&<&\infty.
\end{eqnarray}
Obviously  condition (I.3) is satisfied, i.e.,   $y\rightarrow g(x,y)$ is continuous.\\
Moreover, the condition (I.4) is also satisfied. In fact, for any $r>0$,
$$
\psi_{r}(t)=\sup_{r}|\tilde{G}(X(t),y)-\tilde{G}(X(t),0)|\leq 2\|G\|_{\infty}e^{\int_{0}^{t}q(X_{t})dt},
$$
and for any $T>0$, by the fact that $q\in L^{p}(D)$ with $p>\frac{d}{2}$ and Theorem 2.1 in $\cite{LLZ}$, $E_{x}[\int_{0}^{T}e^{\int_{0}^{t}q(X_{u})du}dt]<\infty$.\\
Therefore, the function $g(x,y)$ satisfies all of the conditions of Theorem $\ref{thoerem infinite horizon}$
. There exists a pair of processes $(y_{x}(t),z_{x}(t))$ such that
for any $T>0$ and $0<t<T$,
\begin{eqnarray}
y_{x}(t)=y_{x}(T)+\int_{t}^{T}g(X(s),y_{x}(s))ds-\int_{t}^{T}\langle z_{x}(s),dM_{x}(s)\rangle
\end{eqnarray}
and
\begin{eqnarray}
\lim_{t\rightarrow \infty}e^{-\int_{0}^{t}h_{1}(X(u))du}y_{x}(t)=0 \quad P_{x}-a.s.
\end{eqnarray}
Put $\hat{Y}_{x}(t)=p_{x}(t)+y_{x}(t)$ and $\hat{Z}_{x}(t)=q_{x}(t)+z_{x}(t)$. It follows that $(\hat{Y}_{x}(t),\hat{Z}_{x}(t))$ satisfies the following equation
$$
d\hat{Y}_{x}(t)=e^{\int_{0}^{t}{q}(X(u))du}\phi(X(t))dL_{t}-\tilde{G}(t,\hat{Y}_{x}(t))dt+<\hat{Z}_{x}(t),dM_{x}>,
$$
$$
\lim_{t\rightarrow \infty}e^{-\int_{0}^{t}h_{1}(X(u))du}\hat{Y}_{t}=0 \quad P_{x}-a.s..
$$
Step 2.\\
Put $Y_{x}(t):=e^{-\int_{0}^{t}q(X(u))dt}\hat{Y}_{x}(t)$ and $Z_{x}(t):=e^{-\int_{0}^{t}q(X(u))dt}\hat{Z}_{x}(t)$, we have
$$
dY_{x}(t)=-F(X(t),Y_{x}(t))+\phi(X(t))dL_{t}+<Z_{x}(t),dM_{x}(t)>,
$$
where $F(x,y)=q(x)y+G(x,y)$.
Moreover,
\begin{eqnarray}
e^{\int_{0}^{t}(-h_{1}+q)(X(u))du}Y_{x}(t)&=&e^{\int_{0}^{t}(-h_{1}+q)(X(u))(u)dt}e^{-\int_{0}^{t}q(X(u))dt}\hat{Y}_{x}(t)\nonumber\\
&=&e^{-\int_{0}^{t}h_{1}(X(u))dt}\hat{Y}_{x}(t)\rightarrow0\quad as\quad t\rightarrow\infty.\nonumber
\end{eqnarray}
Put $u_{0}(x)=Y_{x}(0)$ and $v_{0}(x)=Z_{x}(0)$. \\
Now as in the proof of Theorem, 4.1,  we can solve the following equation
\begin{eqnarray}
\left\{\begin{array}{ll}
{L_{2}}u(x)=-G(x,u_{0}(x)),&\textrm{on $D$}\\
\frac{1}{2}\frac{\partial u}{\partial{\gamma}}(x)=\phi(x) &\textrm{on $\partial D$ }
\end{array}\right.
\end{eqnarray}
and prove that the solution $u$ coincides with $u_{0}(x)$. This completes the proof of the  theorem. $\square$
\end{proof}
\\\\
Suppose that  $F:R^{d}\times R\rightarrow R$ is a bounded measurable function and $r_{1}\in L^{p}(D)$. Consider the following conditions :\\
\textbf{(E.1)} $(y_{1}-y_{2})(G(x,y_{1},z)-G(x,y_{2},z))\leq -r_{1}(x)|y_{1}-y_{2}|^{2}$;\\
\textbf{(E.3)}  $y\rightarrow F(x,y)$ is continuous;\\ \\
Now, after establishing Theorem 6.2,  following the same proof as that of Theorem 5.1, we finally have the following main result.\\
\begin{theorem}
Suppose that the function F satisfies the condition (E.1) and (E.2), and there exists $x_{0}\in D$ such that
\begin{eqnarray}
\label{L1:condition of the last thoerem}
E^{0}_{x_{0}}[\int_{0}^{\infty}\hat{Z}_{s}dL^{0}_{t}]<\infty.
\end{eqnarray}
Then the following problem
\begin{eqnarray}
\left\{\begin{array}{ll}
{L}u(x)=-F(x,u(x)),&\textrm{on $D$}\\
\frac{1}{2}\frac{\partial u}{\partial \gamma}(x)-<\widehat{B},n>(x)u(x)=\Phi(x) &\textrm{on $\partial D$ }
\end{array}\right.
\end{eqnarray}
has a unique, bounded, continuous weak solution.
\end{theorem}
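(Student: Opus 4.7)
The plan is to mirror the proof of Theorem 5.1 verbatim, but to invoke Theorem 6.2 in place of Theorem 4.1. Since Theorem 6.2 requires only the single local-time integrability condition (and, crucially, no auxiliary $L^2$-integrability hypothesis of the form $E_{x_1}[\int_0^\infty e^{2\int_0^t(q+\tilde h)(u)du}dt]<\infty$), we will not need to impose the smallness restriction $\|\hat B\|_{L^p}\le\varepsilon$ that appeared in Section 5.

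First, I would perform the $h$-transform built from the bounded continuous function $v$ constructed in Section 5. Setting
\[
\tilde F(x,y):=e^{v(x)}F(x,e^{-v(x)}y),\qquad \phi(x):=e^{v(x)}\Phi(x),
\]
the boundedness and continuity of $v$ ensure that $\tilde F$ is bounded and Borel measurable, continuous in $y$ (so $(H.2)'$ holds), and inherits the monotonicity
\[
(y_1-y_2)(\tilde F(x,y_1)-\tilde F(x,y_2))\le -r_1(x)|y_1-y_2|^2,
\]
so that $(H.1)'$ holds with the same $r_1\in L^p(D)$; moreover $\phi$ is bounded on $\partial D$.

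Next, I would translate the integrability hypothesis $E^0_{x_0}[\int_0^\infty \hat Z_s\,dL^0_s]<\infty$ into the form required by Theorem 6.2. Using the identity $\hat Z_t=Z_t\, e^{v(X^0_t)-v(X^0_0)}$ (from the formula defining $v$) together with the boundedness of $v$, we obtain $E^0_{x_0}[\int_0^\infty Z_s\,dL^0_s]<\infty$, and then Lemma 4.1 (the Girsanov translation between the two boundary local times) yields
\[
E_{x_0}\Bigl[\int_0^\infty e^{\int_0^s q(X(u))du}\,dL_s\Bigr]<\infty,
\]
which is exactly the hypothesis of Theorem 6.2 applied to $(\tilde F,\phi)$. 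Hence Theorem 6.2 produces a unique bounded continuous weak solution $u\in W^{1,2}(D)$ of
\[
L_2 u=-\tilde F(x,u)\ \text{on}\ D,\qquad \tfrac{1}{2}\tfrac{\partial u}{\partial\gamma}=\phi\ \text{on}\ \partial D.
\]

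I would then define $f(x):=e^{-v(x)}u(x)$ and check that $f$ is the desired weak solution of the original mixed boundary value problem. Substituting the test function $g=e^{-v}\psi$ (for arbitrary $\psi\in C^\infty(\bar D)$) into the weak formulation of the $L_2$-problem and running the integration-by-parts computation from the proof of Theorem 5.1, one sees $\mathcal E(u,e^{-v}\psi)=\mathcal Q(f,\psi)$, while the right-hand side collapses to $\int_{\partial D}\Phi\psi\,d\sigma+\int_D F(x,f(x))\psi\,dx$ by unwinding the definitions of $\tilde F$ and $\phi$. For uniqueness, any second bounded continuous weak solution $\bar f$ produces $\bar u:=e^v\bar f$, which is a bounded continuous weak solution of the transformed problem; the uniqueness in Theorem 6.2 then forces $\bar u=u$, hence $\bar f=f$. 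There is no serious obstacle once the framework of Sections 5 and 6 is in place: the whole content of the theorem is that Theorem 6.2's weaker hypothesis lets us dispense with the smallness of $\|\hat B\|_{L^p}$ assumed in Theorem 5.1, so the only step requiring care is the translation of the local-time integrability condition under the $v$-transform and the subsequent Girsanov change of measure.
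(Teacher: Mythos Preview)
Your proposal is correct and matches the paper's own approach exactly: the paper's proof consists of the single sentence ``following the same proof as that of Theorem 5.1'' after establishing Theorem 6.2, and you have faithfully spelled out what that entails, including the key observation that Theorem 6.2 dispenses with the $L^2$-integrability condition (4.2) and hence with Lemma 5.1 and the smallness restriction on $\|\hat B\|_{L^p}$.
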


\newpage

\end{document}